\documentclass[11pt]{article}
\usepackage[margin=1.05in]{geometry}
\usepackage{amsmath,amssymb,amsthm,mathtools}
\usepackage{booktabs}
\usepackage[colorlinks=true,linkcolor=blue,citecolor=blue,urlcolor=blue]{hyperref}

\newtheorem{theorem}{Theorem}
\newtheorem{proposition}[theorem]{Proposition}
\newtheorem{corollary}[theorem]{Corollary}
\newtheorem{lemma}[theorem]{Lemma}
\newtheorem{conjecture}[theorem]{Conjecture}
\theoremstyle{definition}
\newtheorem{definition}[theorem]{Definition}
\newtheorem{remark}[theorem]{Remark}
\newtheorem*{example}{Example}

\newcommand{\R}{\mathbb{R}}
\newcommand{\Z}{\mathbb{Z}}
\newcommand{\T}{\mathbb{T}}
\newcommand{\e}[1]{e^{2\pi i #1}}
\newcommand{\covol}{\operatorname{covol}}
\newcommand{\vol}{\operatorname{vol}}

\title{Linear Programming Bounds for Fibered Sphere Packings}
\author{Andrew Salmon\thanks{Independent Researcher, United States.
Email: \texttt{asalmon@alum.mit.edu}.  ORCID:
\url{https://orcid.org/0000-0002-9901-399X}.}}
\date{July 2026}

\begin{document}
\maketitle

\begin{abstract}
We study linear programming (LP) bounds for sphere packings that fiber over translates of a fixed lattice of lower rank, as well as their dual formulations. In doing so, we place the work of Conway and Sloane on fibered packings in the context of linear programming bounds for packing problems on $\R^k \times A$, where $A$ is a compact abelian group. For these programs we prove that the primal and the dual both attain their optima, so that every instance has an optimal pair satisfying complementary slackness. We study cases in which the LP bounds considered here achieve the best known sphere packing densities in dimensions $\le 9$ with prescribed translational symmetry, recovering analogues of Propositions~2, 3, 5, and 8 of Conway and Sloane, and we show that the Barnes--Wall lattice achieves the optimal sphere packing density for any $16$-dimensional packing that fibers over translates of $E_8$. In dimension $6$, we show that the natural LP bound fibering over translates of $D_4$ is in fact \emph{equivalent} to the LP bound in dimension $2$ for ordinary sphere packing, while the LP bound for $4$-dimensional packings that fiber over $A_2$ translates is implied by the LP bound in dimension $2$ but has additional rigid structure that may make it more tractable. Finally, using the discrete reduction framework of Li, we show that the linear programming bound for packings fibering over translates of $A_3$ is strictly above $1$, so the linear programming bound alone cannot prove Conjecture~4.1 of Cohn and Rajagopal.
\end{abstract}

\noindent\textbf{Keywords:} Sphere packing $\cdot$ Linear programming bounds
$\cdot$ Fibered packings $\cdot$ Positive definite functions $\cdot$ Locally
compact abelian groups $\cdot$ Duality

\medskip

\noindent\textbf{Mathematics Subject Classification:} Primary 52C17;
Secondary 11H31, 43A35, 90C48

\section{Introduction}

The sphere packing problem asks for the densest packing of congruent, non-overlapping spheres in Euclidean space $\R^n$. In their landmark 2003 paper, Cohn and Elkies~\cite{CE03} introduced a linear programming (LP) bound that computes upper bounds on sphere packing density. This bound is sharp in dimensions $n=1$, $n=8$ (Viazovska~\cite{Via17}) and $n=24$ (Cohn et al.~\cite{CKMRV17}), where the optimal bounds are certified by exact modular form constructions called magic functions. It is also conjecturally sharp in dimension $n=2$.

However, in other dimensions $\ge 4$, the sphere packing problem is open.
To organize this complexity, Conway and Sloane~\cite{CS95} introduced the concept of \emph{fibered packings}.
They conjectured that tight packings in dimension $n$ decompose into parallel layers, each congruent to a tight packing in a lower dimension.
Subject to a sequence of postulates, they classified tight packings in dimensions $n\leq9$.

Interest in fibered packings was recently revived by Cohn and Rajagopal.
They construct new sphere packings in dimension~$5$ from valid
four-colored configurations with phases (cosets) in $D_3^*/D_3$, and
conjecture that any such colored configuration has center density at most~$1$
\cite[Conjecture~4.1]{CR24}.

In this paper, we study the linear programming bounds for fibered packings under the assumption of \emph{prescribed translation symmetry}. Specifically, we fix an $m$-dimensional lattice $L \subset \R^m$ representing the fiber, and consider packings in $\R^{k+m} = \R^k \times \R^m$ invariant under translation by $\{0\}^k \times L$. We assume $\min(L):=\min_{\ell\in L\setminus\{0\}}|\ell|\ge r$, as is necessary for each individual $L$-orbit to be $r$-separated. Index the $L$-orbits of centers by a locally finite set $I$, and write their representatives as $(b_i,t_i)$ with $b_i\in\R^k$ and $t_i\in\R^m$. (Distinct indices may have the same base coordinate.) The remaining packing conditions are
\[
  |b_i-b_j|^2+\min_{\ell\in L}|t_i-t_j-\ell|^2\ge r^2
  \qquad(i\ne j).
\]
Equivalently, the points $(b_i,\bar t_i)$ form an $r$-separated
configuration (with multiplicity in the base projection) on the flat cylinder
\[
  X = \R^k \times \T, \qquad \T = \R^m/L,
\]
with the quotient metric
\[
  d_X((x,\bar y), (x',\bar y'))^2 = |x-x'|^2 + \min_{\ell \in L} |y-y'-\ell|^2.
\]
Let $\delta_B$ denote the number of $L$-orbits of centers per unit
$k$-dimensional base volume, counting multiplicity when several orbits have
the same base coordinate. Then the ambient center density is
$\delta_B/\covol(L)$, where $\covol(L)$ is the volume of a fundamental
domain of $L$, and the sphere-packing density in $\R^{k+m}$ is
\[
  \Delta(P) = \delta_B \frac{\vol B^{k+m}_{r/2}}{\covol(L)}.
\]
Here $B^n_s=\{x\in\R^n:|x|\le s\}$ denotes the closed Euclidean ball of
radius $s$ in dimension $n$.
Throughout, $r$ is a center-to-center exclusion distance (so the packed
spheres have radius $r/2$), while every LP value and every ``base density''
is a center density in the Euclidean base. Under dilation of the base by a
factor $a$, exclusion distances are multiplied by $a$ and base densities
by $a^{-k}$. Unless explicitly stated otherwise, root lattices use the
standard normalization in which their minimal squared norm is $2$.
We write $BW_{16}$ for the $16$-dimensional Barnes--Wall lattice.
Thus, the density problem for aligned fibered packings is exactly the density problem for separated configurations on the flat cylinder $X$.

The natural linear programming bound for packing problems of this type takes as its base space $X=\R^k\times A$, where $A$ is compact abelian. When $A$ is a torus, this recovers the flat cylinder case above and gives upper bounds for fibered packing problems with prescribed translational symmetry, while when $A$ is discrete, it yields upper bounds for colored packing problems. We prove functoriality under homomorphisms of $A$, and we prove that the primal and the dual program both attain their optima, so that an optimal pair satisfies complementary slackness. Strong duality for these programs is a specialization of the duality theorem of Berdysheva, Farkas, Ga\'al, Ramabulana, and R\'ev\'esz~\cite{BFGRR26} for the Delsarte extremal problem on locally compact abelian groups; the primal attainment argument adapts~\cite[Proposition~A.1]{CdLS22}, and the dual attainment theorem generalizes~\cite[Proposition~3.6]{CdLS22}.

For a one-dimensional base ($k=1$) and fiber dimensions $m\leq7$, we show that the cylinder LP is certified sharp by a simple triangular function. These cases traverse lattices achieving the best known sphere-packing densities in the corresponding dimensions and recover the $k=1$ aligned classification of Conway and Sloane unconditionally. Separately, for the $D_8$ fiber, we characterize a colored packing problem in dimension 9 by a four-color kernel built from the same triangle profile in two color-difference channels, for which all members of the fluid diamond family give a dual certificate.

We give two further applications of sphere-packing magic functions. First, we apply the magic functions on $\R^8$ and $\R^{24}$ to cylinder problems obtained as quotients of those spaces. Second, we use Viazovska's $E_8$ magic function to construct a sharp cylinder magic function for the Barnes--Wall lattice viewed as a fibered packing over $E_8$.

For the $E_6$ over $D_4$ problem, we prove that flexibility in the optimal coloring patterns forces all nontrivial character channels of a two-point certificate to vanish identically, so that the two-point colored or cylinder LP is sharp if and only if the ordinary planar Cohn--Elkies LP is sharp.

Following Li's discrete reduction framework~\cite{Li25}, we restrict the
Euclidean base to $(\Z/m\Z)^k$ while retaining the finite color group. An
exact rational dual certificate gives a rigorous lower bound for the
continuous translation-invariant colored LP. For the $D_3$-fibered problem
of Cohn and Rajagopal (written $D_5/A_3$, with $A_3\cong D_3$), this lower
bound is strictly greater than the conjectured density, so this two-point
LP cannot prove \cite[Conjecture~4.1]{CR24}.

\begin{table}[htbp]
\centering
\footnotesize
\setlength{\tabcolsep}{4.5pt}
\caption{Proved and conjecturally sharp cylinder or colored LP base
center-density values.
Entries marked conditional follow from sharpness of the planar
Cohn--Elkies LP.}
\label{tab:exact-values}
\vspace{0.5em}
\begin{tabular}{ccccc}
\toprule
Space $\Lambda$ & Base $k$ & Fiber $L$ & Target LP value & Status/reference \\
\midrule
\multicolumn{5}{l}{\textbf{One-dimensional base}} \\
$A_2$ & $1$ & $A_1$ & $\sqrt{2/3}$ & Thm.~\ref{thm:triangle} \\
$A_3$ & $1$ & $A_2$ & $\sqrt{3}/2$ & Thm.~\ref{thm:triangle} \\
$D_4$ & $1$ & $D_3$ & $1$ & Thm.~\ref{thm:triangle} \\
$D_5$ & $1$ & $D_4$ & $1$ & Thm.~\ref{thm:triangle} \\
$E_6$ & $1$ & $D_5$ & $2/\sqrt{3}$ & Thm.~\ref{thm:triangle} \\
$E_7$ & $1$ & $E_6$ & $\sqrt{3/2}$ & Thm.~\ref{thm:triangle} \\
$E_8$ & $1$ & $E_7$ & $\sqrt{2}$ & Thm.~\ref{thm:triangle} \\
$E_8$ & $1$ & $D_7$ & $2$ & Thm.~\ref{thm:triangle} \\
$\Lambda_{17}$ & $1$ & $BW_{16}$ & $1$ & Thm.~\ref{thm:triangle} \\
\midrule
\multicolumn{5}{l}{\textbf{Higher-dimensional base}} \\
$E_8$ & $2$ & $E_6$ & $\sqrt{3}$ & Thm.~\ref{thm:sharp_fibrations} \\
$E_8$ & $4$ & $D_4$ & $2$ & Thm.~\ref{thm:sharp_fibrations} \\
$BW_{16}$ & $8$ & $E_8$ & $1$ & Thm.~\ref{thm:bw16-torus} \\
\midrule
\multicolumn{5}{l}{\textbf{Planar-base problems controlled by the planar LP}} \\
$E_6$ & $2$ & $D_4$ & $2/\sqrt{3}$ &
conditional; Cor.~\ref{cor:e6d4cyl} \\
$D_4$ & $2$ & $A_2$ & $\sqrt{3}/2$ &
Conj.~\ref{conj:d4a2}; Prop.~\ref{prop:d4a2-planar} \\
\bottomrule
\end{tabular}
\end{table}

\paragraph{Outline of the paper.} In Section~\ref{sec:formulations}, we define the common primal and dual programs on $X=\R^k\times A$ and specialize them to cylinders and finite color groups. Section~\ref{sec:solved} gives the exactly solved cases: the $k=1$ cylinder certificates for fiber dimensions $m\leq7$, the finite-color $D_8$ certificate, cylinder quotients of $\R^8$ and $\R^{24}$, and a sharp cylinder magic function for the Barnes--Wall lattice viewed as a fibered packing over $E_8$. Section~\ref{sec:collapse} collects the conjectural cases and their relations to the two-dimensional sphere packing LP, including the $E_6$ over $D_4$ collapse theorem. Section~\ref{sec:reduction} then develops discrete reduction duals and proves the certified $D_5/A_3$ gap. Section~\ref{sec:duality} proves the duality theorems used throughout: strong duality, attainment of the primal and of the dual, and the complementary slackness conditions that the certificates rely on.

\paragraph{Use of artificial intelligence.} Artificial-intelligence tools
assisted with drafting and numerical computation.  Any errors are the
author's own.

\section{The Common Linear Program on $\R^k\times A$}\label{sec:formulations}

Both the cylinder and the translation-invariant colored bounds are instances
of one linear program on a locally compact abelian group.  Let $A$ be a
compact abelian group with normalized Haar measure and Pontryagin dual
$\widehat A$, and put
\[
 X=\R^k\times A,
 \qquad
 \widehat X=\R^k\times\widehat A.
\]
For $\chi\in\widehat A$, write
\[
 f_\chi(x)=\int_A f(x,a)\overline{\chi(a)}\,dm_A(a),
 \qquad
 \widehat f(\xi,\chi)=\widehat{f_\chi}(\xi).
\]
Thus the compact factor contributes discrete Fourier channels, whether it
is a torus or a finite group.  In the packing applications below, these are
the continuous- and discrete-phase descriptions of a lattice fiber; the
abstract group $A$ is not meant to encode a non-lattice fiber.

\subsection{The linear program and its dual}

All functions below are real-valued, continuous, and integrable with
respect to Haar measure.

Let $K(X)$ be the cone of continuous, integrable positive-definite
functions on $X$.  By Bochner's theorem, this is equivalent to
$\widehat f(\xi,\chi)\geq0$ for every $(\xi,\chi)\in\widehat X$.

\begin{definition}[$C$-constrained linear program]
\label{def:c-constrained-lp}
Let $C\subset X=\R^k\times A$ be closed with $0\notin C$.  The
$C$-constrained linear program on $X$ has value
\begin{equation}\label{eq:primal_lca_val}
 P_X(C)=\inf\left\{f(0):
 \begin{array}{l}
  f\in K(X),\\
  f(p)\leq0\quad\text{for every }p\in C,\\
  \displaystyle\int_X f(p)\,dm_X(p)=1
 \end{array}
 \right\}.
\end{equation}
Here $m_X$ is the product Haar measure. The difference set $C$ encoding the geometry enters only as a restriction on which the auxiliary function is required
to be nonpositive.
\end{definition}

\begin{example}[Torus]
Let $L\subset\R^m$ be a lattice and take
\[
 A=\T=\R^m/L,
 \qquad
 X=\R^k\times\T.
\]
Then $\widehat A=L^*$, and the character channels are precisely the fiber
Fourier modes.  For sphere packings with exclusion distance $r$, the set of
admissible nonzero differences is
\[
 C=\{(x,\bar y)\in X:d_X((x,\bar y),0)\geq r\}.
\]
This is the cylinder linear program for aligned $L$-fibered packings.
\end{example}

\begin{example}[Finite abelian group]
Let $A=G$ be a finite abelian group with normalized counting measure.  If
$C_\delta\subset\R^k$ is the set of admissible nonzero differences for a
pair of colors whose difference is $\delta\in G$, then take
\[
 C=\bigcup_{\delta\in G}(C_\delta\times\{\delta\})
 \subset\R^k\times G.
\]
When each $C_\delta$ is closed and $0\notin C_0$, this is a closed forbidden
set not containing the identity, and the resulting $C$-constrained program
is the translation-invariant finite-color linear program.
\end{example}

\begin{theorem}[Primal attainment]\label{thm:lca-primal-attainment}
Let $U\subset X$ be an open neighborhood of the identity with
$m_X(U)<\infty$, and set $C=X\setminus U$.  Assume in addition that
$C=\overline{\operatorname{int}C}$.  Then the infimum in
\eqref{eq:primal_lca_val} is attained by a real-valued function
\[
 f\in K(X),\qquad
 f\leq0\ \hbox{on }C,\qquad \int_X f\,dm_X=1.
\]
\end{theorem}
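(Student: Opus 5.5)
We take a minimizing sequence and extract a limit, following the scheme of \cite[Proposition~A.1]{CdLS22}. First we check that the feasible set is nonempty, so that $P_X(C)<\infty$. One feasible function is a normalized autocorrelation $g*\widetilde g$ of a bump $g$ supported in a small symmetric neighbourhood $V$ with $V-V\subset U$. Such a function is supported in $U$, so it vanishes on $C$.

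Let $f_n$ be feasible with $f_n(0)\downarrow P:=P_X(C)$. Since each $f_n$ is positive definite, we have $|f_n(p)|\le f_n(0)$, so the sequence is uniformly bounded in sup norm. On the Fourier side, $\widehat f_n\ge0$ and $\int_{\widehat X}\widehat f_n=f_n(0)$ by Fourier inversion. Inversion is legitimate for continuous integrable positive-definite functions: Bochner's theorem gives $\widehat f_n\in L^1$. It follows that the measures $\mu_n=\widehat f_n\,dm_{\widehat X}$ have bounded total mass.

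Next we pass to limits. By Banach--Alaoglu and a diagonal argument, after passing to a subsequence we get $\mu_n\to\mu$ vaguely, with $\mu\ge0$ a finite measure on $\widehat X$. At the same time $f_n\to f$ weak-$*$ in $L^\infty(X)$. We set $f=\check\mu$, which is continuous, bounded and positive definite.

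The main obstacle is showing that no mass is lost, i.e.\ that $\int_X f=1$. We need both tightness of $f_n$ on $X$ (so that the mass $\int f_n=1$ does not escape to infinity in $X$) and control at the frequency $0$. The standard device uses the constraint $f_n\le0$ off $U$. Because $m_X(U)<\infty$ and $f_n\le f_n(0)$, we get
\[
\int_X f_n^+\le f_n(0)\,m_X(U)\quad\text{bounded},\qquad \int_X f_n^-=\int f_n^+-1 \quad\text{bounded}.
\]
Thus $\|f_n\|_{L^1}$ is bounded. We then test against a Gaussian (or Fej\'er-type) positive-definite approximate identity $\phi_\epsilon$ on $\widehat X$ whose inverse transform $\check\phi_\epsilon\ge0$ lies in $L^1(X)$:
\[
\int_{\widehat X}\widehat f_n\,\phi_\epsilon=\int_X f_n\,\check\phi_\epsilon .
\]
Splitting the right-hand side over $U$ and $C$, the $C$-part is $\le0$. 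This gives the one-sided inequality $\widehat f(0)\ge\limsup$ of the $U$-contributions. Combined with weak-$*$ convergence on the finite-measure set $U$, where $\int_U f_n\to\int_U f$, this yields $\int_U f\ge 1+\int_C f_n^-$ in the limit, and hence $\int_X f\ge 1$ after Fatou-type arguments on $C$. Here $f\le0$ on $C$ a.e.\ by weak-$*$ limits, and everywhere by continuity together with $C=\overline{\operatorname{int}C}$. The hypothesis $C=\overline{\operatorname{int}C}$ is used precisely here: a.e.\ nonpositivity on $\operatorname{int}C$ upgrades to pointwise nonpositivity on $C$.

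The main remaining step is integrability of $f$ together with $\int f\le 1$. We would prove $\int_X f^-\le\liminf\int f_n^-$ by lower semicontinuity of the $L^1$ norm of the negative part on $C$ under weak-$*$ convergence. For the upper bound we expect to use $\widehat f(0)=\lim\mu_\epsilon$-averages $\le \liminf \widehat f_n(0)=1$; if necessary we would first replace $f_n$ by $f_n*\psi$ for a mollifier to get equicontinuity of $\widehat f_n$ near $0$.

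Finally, $f(0)=\mu(\widehat X)\le\liminf\mu_n(\widehat X)=P$. If $\int f=c\ge1$, then $f/c$ is feasible and $f(0)/c\le P$, so $f(0)/c=P$ and $c=1$ whenever $P>0$. We have $P>0$ because $1=\int f_n\le f_n(0)\,m_X(U)$. This forces attainment with $\int f=1$. Real-valuedness is preserved, since each $f_n$ is real and the limit is real.
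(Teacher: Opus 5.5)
Your proposal is correct in outline and follows the same skeleton as the paper's proof. The shared steps are:
\begin{itemize}
\item the $L^1$ bound coming from $f_n\le0$ off $U$ and $|f_n|\le f_n(0)$;
\item the identity $\int_Uf_n=1+\int_C|f_n|$, which gives $\int_Xf\ge1$ in the limit;
\item the use of $C=\overline{\operatorname{int}C}$ to upgrade the a.e.\ sign condition to a pointwise one;
\item the closing rescaling $P\le f(0)/c\le f(0)\le P$, which forces $c=1$.
\end{itemize}

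Where you differ is the compactness mechanism. The paper bounds $\|f_j\|_2^2\le\|f_j\|_\infty\|f_j\|_1$ and takes a weak $L^2$ limit. It upgrades this to strong convergence by Mazur's lemma, since the constraints are convex, and transports it to $\widehat X$ by Plancherel. Almost-everywhere convergence on both sides then lets ordinary Fatou show $\widehat f\in L^1$ with $\int\widehat f\le P$, and lets dominated convergence handle $\int_Uf_j$.

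You instead take two independent limits: weak-$*$ in $L^\infty(X)$ on the space side, and vague for $\widehat f_n\,dm_{\widehat X}$ on the frequency side. This gives positive definiteness and $f(0)=\mu(\widehat X)\le P$ at once, and avoids Mazur. But you must supply the step you only assert (``we set $f=\check\mu$''), namely that the two limits agree. The argument is short. For $\phi\in L^1(X)$, Fubini gives $\int_Xf_n\overline\phi\,dm_X=\int_{\widehat X}\widehat f_n\overline{\widehat\phi}\,dm_{\widehat X}$. Because the $\mu_n$ have uniformly bounded mass, vague convergence extends to test functions in $C_0(\widehat X)\ni\widehat\phi$. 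Hence the weak-$*$ limit equals $\check\mu$ almost everywhere.

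Two smaller points on your route. Your ``Fatou-type'' step on $C$ is really weak-$*$ lower semicontinuity, obtained by testing against $\mathbf 1_K$ for $K\subset C$ of finite measure. Also, for non-metrizable $A$ you need subnets rather than subsequences; the paper's reflexive $L^2$ route sidesteps this.

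Finally, drop the Gaussian-testing paragraph and the planned direct proof of $\int_Xf\le1$. Pairing with $\check\phi_\epsilon\ge0$ bounds the frequency side from above by the $U$-contribution, not from below as you state. In any case, your last paragraph already yields $c=1$, exactly as the paper does.
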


\begin{proof}
See Section~\ref{app:primal-attainment}.
\end{proof}

We now formulate the dual of this problem.  Let $M_{\mathrm{temp}}(X)$ be
the cone of nonnegative tempered Radon measures on $X$.  The dual linear
program is
\begin{equation}\label{eq:dual_lca_val}
 D_X(C)=\sup\left\{
 \widehat\mu\big(\{(0,\mathbf1)\}\big):
 \begin{array}{l}
   \mu\in M_{\mathrm{temp}}(X),\quad
   \operatorname{supp}\mu\subseteq\{0\}\cup C,\\
   \mu(\{0\})=1,\quad
   \widehat\mu\text{ is a nonnegative tempered Radon measure}
 \end{array}
 \right\}.
\end{equation}
Here $\mathbf1\in\widehat A$ is the trivial character.  Thus the primal is
normalized by $\int_Xf\,dm_X=1$, while the dual is normalized by
$\mu(\{0\})=1$.

Indeed, put $z=\widehat\mu(\{(0,\mathbf1)\})$.  Since $0\notin C$, every
dual feasible measure decomposes uniquely as $\mu=\delta_0+\nu$, where
$\nu$ is nonnegative and supported on $C$.  Removing the atom of
$\widehat\mu$ at $(0,\mathbf1)$ gives the equivalent condition
\[
 T:=\mu-zm_X=\delta_0+\nu-zm_X\in K^*(X),
 \qquad
 \widehat T=\widehat\mu-z\delta_{(0,\mathbf1)}\geq0.
\]
Here $K^*(X)$ denotes the cone of positive-definite tempered
distributions on $X$.

Weak duality follows by pairing measures with continuous
functions.  If $f$ is primal feasible and $\mu$ is dual feasible, then
formally
\[
 f(0)-z=\langle T,f\rangle-\int_C f\,d\mu\geq0,
\]
because $\langle T,f\rangle\geq0$ and $f\leq0$ on the support of the
nonnegative measure $\mu|_C$.  Thus $P_X(C)\geq D_X(C)$.  Both steps
need justification, since a primal feasible $f$ is only continuous and
integrable: the inequality $\langle T,f\rangle\geq0$ rests on the
identity $\langle T,f\rangle=\langle\widehat T,\widehat f\rangle$, which
holds for Schwartz functions but not for every $f\in K(X)$, and
$\int_Cf\,d\mu$ need not converge.
Proposition~\ref{prop:app-mollify} settles both by mollifying $f$ into
the Schwartz class, where the pairing is legitimate, and passing to the
limit.

\begin{theorem}[Strong duality]\label{thm:strong-duality-lca}
For every closed subset $C\subset X=\R^k\times A$ with $0\notin C$, the
common linear program has no duality gap:
\[
  P_X(C) = D_X(C).
\]
\end{theorem}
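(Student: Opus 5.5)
The plan is to specialize the duality theorem of Berdysheva, Farkas, Ga\'al, Ramabulana and R\'ev\'esz~\cite{BFGRR26} for the Delsarte extremal problem on a locally compact abelian group. Weak duality $P_X(C)\ge D_X(C)$ is already available from Proposition~\ref{prop:app-mollify}. So only the reverse inequality $P_X(C)\le D_X(C)$ needs proof.

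\textbf{Step 1: match the two formulations.} The Delsarte problem of~\cite{BFGRR26} is posed on a general LCA group $G$ with an open or closed exceptional set. Its primal minimizes $f(0)/\int f$ (or maximizes $\int f/f(0)$) over continuous, integrable, positive-definite $f$ that are nonpositive off a neighborhood of the identity. Its dual is phrased in terms of positive-definite measures, or distributions, supported in $\{0\}\cup C$ whose Fourier transform has an atom at the trivial character.

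I will take $G=X=\R^k\times A$, which is LCA with dual $\R^k\times\widehat A$. I will then check that our normalizations agree with theirs: $\int_X f\,dm_X=1$ in the primal, $\mu(\{0\})=1$ in the dual, and the dual objective $\widehat\mu(\{(0,\mathbf1)\})$ plays the role of their atom at the identity of $\widehat X$. The equivalent reformulation $T=\mu-zm_X\in K^*(X)$ recorded above is the form in which I expect their dual cone to be stated.

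One point must be verified. Their dual objects may be positive-definite Radon measures, or distributions, rather than nonnegative tempered measures with nonnegative tempered transform. For $X=\R^k\times A$ with $A$ compact, a nonnegative positive-definite Radon measure is automatically translation-bounded, hence tempered. Its Fourier transform is then a nonnegative tempered measure by the Bochner--Schwartz theorem. So the two dual cones coincide.

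\textbf{Step 2: handle closedness and the degenerate cases.} Our $C$ is an arbitrary closed set with $0\notin C$, so $U=X\setminus C$ is an open neighborhood of $0$, possibly of infinite measure. If $m_X(U)=\infty$, a direct construction works. Both sides are either $0$ or governed by approximation: a primal feasible $f$ with small $f(0)$ comes from autocorrelations $\mathbf 1_V*\mathbf 1_{-V}/m_X(V)^2$ with $V-V\subset U$ large, and the dual value $0$ is witnessed by $\mu=\delta_0$.

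I expect~\cite{BFGRR26} to cover general open $U$. If their theorem requires additional regularity, I will instead reduce to it by approximating $C$ from inside by closed sets $C_n\uparrow$ with the needed regularity. I will then use monotonicity $P_X(C_n)\le P_X(C)$ together with a weak-$*$ compactness argument on the duals. Normalized by $\mu(\{0\})=1$, the feasible dual measures $\mu_n$ are uniformly translation-bounded, because positive definiteness bounds $\mu(x+V)$ by a constant times $\mu(V)$-type quantities. This lets me extract a vaguely convergent subsequence whose limit is supported in $\{0\}\cup C$ and stays positive definite.

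\textbf{Main obstacle.} The main obstacle is the upper semicontinuity of the objective $\widehat\mu_n(\{(0,\mathbf1)\})$ under vague limits in this last step. Atoms of the Fourier transform at the origin are computed as the mean $\lim_{R\to\infty}\mu(B_R\times A)/m_X(B_R\times A)$. Vague convergence need not preserve such densities. I would control this by testing against a positive-definite Fej\'er-type kernel $\phi_R$ with $\widehat\phi_R\ge0$ and $\widehat\phi_R(0,\mathbf 1)=1$. Positive definiteness then gives $z_n\le\langle\mu_n,\phi_R\rangle$ uniformly. Passing to the limit $n\to\infty$ and then $R\to\infty$ yields $\limsup z_n\le z$. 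This gives $D_X(C)\ge\lim D_X(C_n)=\lim P_X(C_n)\ge P_X(C)$, provided the primal value is continuous along the approximation. That continuity holds because feasible primal functions for $C$ are feasible for $C_n$, while the converse inequality is again a compactness argument on the dual side.
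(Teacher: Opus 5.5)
Your main line is exactly the paper's proof: apply \cite[Theorem~5.3 and Remark~5.4]{BFGRR26} with $\Omega=X\setminus C$, $\rho=-m_X$, $\sigma=\delta_0$, convert their dual into a measure feasible for \eqref{eq:dual_lca_val}, and close the loop with the weak duality of Proposition~\ref{prop:app-mollify}. Step~1 overstates the match, though. The primal in \cite{BFGRR26} is posed on the Wiener amalgam $C^{\infty,1}(X)$, not on all of $K(X)$, and their dual is $\inf\{s:\ s\delta_0-m_X=\nu-\kappa\}$. The measure $\mu=\delta_0+\kappa/s$ is feasible with objective \emph{at least} $1/s$. So the two formulations do not coincide. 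What you actually get is the chain $D_X(C)\ge 1/\Delta^{\infty,1}(C)\ge 1/\Delta(C)=P_X(C)$, using the class inclusion and \eqref{eq:app-reciprocity}, plus the observation that $\Delta^{\infty,1}(C)>0$ because Lemma~\ref{lem:app-feasible} supplies a feasible function in $C^{\infty,1}$. Every inequality points the right way, so weak duality on the full class $K(X)$ closes the sandwich.

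Step~2 should be deleted, not repaired. Theorem~5.3 of \cite{BFGRR26} needs only that $\Omega$ be open and contain $0$, so it imposes no finiteness or regularity condition on $U$. Your claim that $m_X(U)=\infty$ is handled by autocorrelations $\mathbf 1_V*\mathbf 1_{-V}/m_X(V)^2$ with large $V$, giving value $0$, is false. Take $X=\R$ and $U=\R\setminus\bigcup_{n\ne0}[n-\tfrac14,n+\tfrac14]$. Then $m(U)=\infty$, yet $\Z$ is admissible, so Lemma~\ref{lem:config-dual} gives $D_X(C)\ge1$. Moreover, any $V$ with $V-V\subseteq U$ has $m(V)\le1$ by pigeonhole modulo $\Z$. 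The fallback approximation is also circular. For $C_n\subseteq C$ the inequality $D_X(C_n)\le D_X(C)$ is trivial, since feasible measures for $C_n$ are feasible for $C$. The inequality you actually need, $\lim_n P_X(C_n)\ge P_X(C)$, is itself a strong-duality statement, and dual-side compactness cannot supply it. Finally, your ``main obstacle'' (upper semicontinuity of the Fourier atom under vague limits) plays no role in strong duality. It belongs to dual attainment, Theorem~\ref{thm:dual-attainability}, which the paper proves separately.
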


\begin{proof}
The absence of a duality gap is a specialization of
\cite[Theorem~5.3 and Remark~5.4]{BFGRR26}; see
Section~\ref{sec:duality}.
\end{proof}

\begin{theorem}[Dual attainability]\label{thm:dual-attainability}
For every closed subset $C\subset X=\R^k\times A$ with $0\notin C$, the
supremum defining $D_X(C)$ is attained: there is a measure $\mu$
feasible for \eqref{eq:dual_lca_val} whose objective value
$\widehat\mu(\{(0,\mathbf1)\})$ equals $D_X(C)$.
\end{theorem}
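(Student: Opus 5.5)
We argue by a compactness argument in the vague topology on tempered measures, generalizing \cite[Proposition~3.6]{CdLS22}. If $D_X(C)=+\infty$ attainment is vacuous in the appropriate extended sense. If $D_X(C)$ is finite, Theorem~\ref{thm:strong-duality-lca} gives $D_X(C)=P_X(C)<\infty$, so a primal feasible $f$ exists. If some feasible $\mu$ exists at all (e.g.\ $\mu=\delta_0$, whose transform is $m_{\widehat X}\ge0$, with value $0$ since $m_{\widehat X}$ has no atoms when $k\ge1$; when $k=0$ the atom is normalized accordingly), take a maximizing sequence $\mu_n=\delta_0+\nu_n$ with $z_n=\widehat\mu_n(\{(0,\mathbf1)\})\to D_X(C)$.

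\textbf{Step 1: uniform local mass bounds.} We need $\sup_n\mu_n(B)<\infty$ and $\sup_n\widehat\mu_n(\widehat B)<\infty$ for every compact $B\subset X$, $\widehat B\subset\widehat X$. For the measures themselves, choose a nonnegative Schwartz-type $g\in K(X)$ with $g\ge1$ on a neighbourhood $V\times A$ of $0$ (e.g.\ $g=\phi*\widetilde\phi$ with $\phi$ a bump in $\R^k$, constant in $A$). Pairing positive-definite $T_n=\mu_n-z_nm_X$ with translates of $g$, positivity of $\widehat\mu_n$ gives $\int g(p-x)\,d\mu_n(p)\le \int g\,d\mu_n\cdot$(const), via the standard inequality $|\langle \mu,\tau_x g\rangle|\le\langle\mu,g\rangle$ for positive-definite $\mu$ and $g=\phi*\widetilde\phi$. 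And $\langle\mu_n,g\rangle=\langle\widehat\mu_n,\widehat g\rangle$ is controlled: choosing $\widehat g$ compactly supported and using the same trick on the Fourier side, the two families bound each other; the clean route is to bound $\langle\mu_n,g\rangle$ by $g(0)+z_n\int g$ plus the contribution of $\nu_n$, which is itself bounded by $\langle T_n,g\rangle$-type identities together with $z_n\le D_X(C)$. Covering a compact set by finitely many translates yields the uniform local bound for $\mu_n$, and symmetrically for $\widehat\mu_n$ (whose roles are interchanged by Fourier transform, with $\widehat{\widehat\mu_n}$ a reflection of $\mu_n$). A polynomial growth bound $\mu_n(B_R)\lesssim R^{k}$ uniform in $n$ follows from translation invariance of the local bound.

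\textbf{Step 2: extraction and limit.} By Banach--Alaoglu (Radon measures with uniform local bounds are vaguely sequentially compact since $X$ is $\sigma$-compact and second countable here), pass to a subsequence with $\mu_n\to\mu$ and $\widehat\mu_n\to\lambda$ vaguely. The uniform polynomial growth makes the convergence also hold in $\mathcal S'(X)$, so $\lambda=\widehat\mu$ and $\widehat\mu\ge0$ is tempered. Support in the closed set $\{0\}\cup C$ is preserved by vague limits. The atom at $0$: since $0\notin C$ and $C$ is closed, a neighbourhood $W$ of $0$ meets $\operatorname{supp}\mu_n$ only at $0$, so $\mu_n|_W=\delta_0$ for all $n$ and hence $\mu(\{0\})=1$.

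\textbf{Step 3: the objective (main obstacle).} Vague limits are only upper semicontinuous on compact sets for atoms, so $\widehat\mu(\{(0,\mathbf1)\})\ge\limsup z_n$ is what we need, and it is the right direction: for closed $\{(0,\mathbf1)\}$, $\lambda(\{(0,\mathbf1)\})\ge\limsup\widehat\mu_n(\{(0,\mathbf1)\})$ by the portmanteau inequality for closed compact sets under vague convergence with uniformly bounded local mass. Hence the value of $\mu$ is $\ge D_X(C)$, and by definition $\le D_X(C)$, giving attainment. The delicate point I expect to fight with is Step 1: producing the uniform local bound for $\widehat\mu_n$ near $(0,\mathbf 1)$ without circularity, which I would handle by testing $T_n\in K^*(X)$ against $g=\phi*\widetilde\phi$ with $\widehat\phi$ compactly supported and using $\nu_n\ge0$ only on the region where $g$ can be chosen nonnegative.
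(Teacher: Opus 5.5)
Your architecture (uniform local mass bounds, vague extraction, preservation of the support and of the atom at $0$, upper semicontinuity of the Fourier atom at $(0,\mathbf 1)$) matches the paper's. However, the step that carries all the content, the uniform local bound of Step~1, is not established by what you wrote.

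You take $g=\phi*\widetilde\phi$ constant in $A$, so $\operatorname{supp}g\supseteq V'\times A$, and this set generally meets $C$. In the colored problems $C\supseteq\R^k\times\{\delta\}$ whenever $d_\delta=0$ (as in Section~\ref{sec:d8fluid-short}). A cylinder has points of $C$ with $x=0$ as soon as $R(L)\ge r$. Then $\langle\mu_n,g\rangle=g(0)+\int g\,d\nu_n$ contains exactly the quantity you are trying to bound, and your proposed control is circular. Positive definiteness of $T_n$ only gives $\langle T_n,g\rangle\ge0$, i.e.\ $\int g\,d\nu_n\ge z_n\int g-g(0)$, which is a \emph{lower} bound. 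The claim that ``the two families bound each other'' presupposes a bound on $\widehat\mu_n$ near $(0,\mathbf 1)$, and there, unlike for $\mu_n$ near $0$, nothing is known a priori.

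The paper obtains this bound (Lemma~\ref{lem:app-uniform}) with a different test function. A sign swap lemma produces a compactly supported positive definite $F$ with $F\le0$ on $C$ and $F\le-1$ on $K\cap C$. Then $0\le\langle\mu,F\rangle\le F(0)-\kappa(K\cap C)$.

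Your own tool repairs the gap once $g$ is localized in $X$ rather than in $\R^k$, using the observation you already make in Step~2. Take $\phi\ge0$ continuous and supported in a relatively compact neighborhood $V_0$ of $0$ in $X$ with $\overline{V_0}-\overline{V_0}$ disjoint from $C$. Then $\mu_n=\delta_0$ on $\operatorname{supp}g$, so $\langle\mu_n,g\rangle=g(0)$ exactly. The inequality $|\langle\mu_n,\tau_xg\rangle|\le\langle\mu_n,g\rangle$ then gives $\mu_n(x+V)\le2$ for $V=\{g>g(0)/2\}$, uniformly in $n$ and $x$.

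With that fix your route is a valid and more elementary alternative to the paper's. It avoids the sign swap lemma, gives uniform translation boundedness (hence uniform tempered growth) directly, and lets you replace the vague continuity of Proposition~\ref{prop:app-continuity} by convergence in $\mathcal S'$. No separate bound on $\widehat\mu_n$ is then needed. Indeed, $\mu_n\to\mu$ in $\mathcal S'$ gives $\widehat\mu_n\to\widehat\mu$ in $\mathcal S'$, hence $\widehat\mu\ge0$. Testing against $\mathbf 1_{\{\mathbf 1\}}(\chi)\rho(\xi/\varepsilon)$, with $0\le\rho\le1$, $\rho(0)=1$ and $\rho$ compactly supported, yields $\lim_n z_n\le\widehat\mu(\{(0,\mathbf 1)\})$ as $\varepsilon\to0$.

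Two minor points. First, $D_X(C)<\infty$ always, by Lemma~\ref{lem:app-feasible} and weak duality, so neither your case split nor strong duality is needed. Second, sequential extraction requires $A$ metrizable and $\mathcal S'$ requires $A$ Lie, whereas the theorem is stated for arbitrary compact $A$. In that generality you should work with nets and Proposition~\ref{prop:app-continuity}.
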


\begin{proof}
See Section~\ref{app:attainability}.
\end{proof}

\begin{remark}
For $X=\R^n$ this is \cite[Proposition~3.6]{CdLS22}, where the dual
optimum is attained by a tempered distribution and the compactness comes
from the Banach--Alaoglu theorem applied to a polar set in
$\mathcal{S}'(\R^n)$.  The conclusion there is equivalent to ours in that
case: the dual constraints force such a distribution to be
$\delta_0+\nu$ with $\nu$ a nonnegative measure, and conversely the
limit produced in Section~\ref{app:attainability} is again tempered,
since a positive definite measure is translation bounded
\cite[Section~4]{AGdL} and translation bounded measures on
$\R^k\times A$ have polynomially bounded growth.
\end{remark}

When $C=-C$, let $G_C$ be the graph on $X$ in which two distinct elements
$g$ and $g'$ are adjacent if $g-g'\notin C$.  Thus independent sets in
$G_C$ are precisely sets whose nonzero differences lie in $C$.  The linear
program above is the translation-invariant analogue of the $\vartheta'$
bound for $G_C$; compare the positive-semidefinite-measure formulation in
\cite[Proposition~5.1]{CS21}.  From this viewpoint, the functoriality below
is the linear-programming counterpart of the monotonicity of theta-type
bounds under graph homomorphisms~\cite{Lov79}.  With $k$ fixed, write
$\operatorname{LP}_A(C)=P_{\R^k\times A}(C)$.

\begin{proposition}[Functoriality]\label{prop:lca-functoriality}
Let $A$ and $B$ be compact abelian groups, let $\phi:A\to B$ be a continuous
homomorphism, and write
$\Phi=\operatorname{id}_{\R^k}\times\phi$.  Suppose that the closed
constraint sets $C_A\subset\R^k\times A$ and
$C_B\subset\R^k\times B$ do not contain the identity and satisfy
\[
 \Phi(C_A)\subseteq C_B.
\]
Then
\[
 \operatorname{LP}_A(C_A)\leq \operatorname{LP}_B(C_B).
\]
\end{proposition}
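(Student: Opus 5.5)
Given a primal feasible $g$ for $\operatorname{LP}_B(C_B)$, we pull it back along $\Phi$ and set $f=g\circ\Phi$ on $\R^k\times A$. Taking the infimum over $g$ then gives $\operatorname{LP}_A(C_A)\le \operatorname{LP}_B(C_B)$, provided $f$ is primal feasible for $C_A$ with $f(0)=g(0)$. So the proof reduces to checking four properties of $f$.

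\emph{Continuity and real values.} Both are immediate, since $\Phi$ is continuous.

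\emph{Nonpositivity on $C_A$.} If $p\in C_A$, then $\Phi(p)\in C_B$, so $f(p)=g(\Phi(p))\le0$.

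\emph{Value at the identity.} Since $\Phi(0)=0$, we have $f(0)=g(0)$.

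\emph{Positive definiteness.} For any points $p_1,\dots,p_N\in\R^k\times A$ and scalars $c_j\in\mathbb{C}$, the additivity of $\Phi$ gives
\[
\sum_{i,j}c_i\overline{c_j}\,f(p_i-p_j)=\sum_{i,j}c_i\overline{c_j}\,g(\Phi(p_i)-\Phi(p_j))\ge0 .
\]
Hence $f$ is positive definite in the kernel sense. Once $f$ is known to be integrable, this is equivalent to $\widehat f\ge0$ by Bochner's theorem. Alternatively, on the Fourier side one can write $g_\chi$ for $\chi\in\widehat B$ and note that $f_\psi$ is nonzero only for characters of the form $\psi=\chi\circ\phi$.

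\emph{Integrability and the normalization} are the main point to check. By Fubini,
\[
\int_{\R^k\times A}f\,dm=\int_{\R^k}\int_A g(x,\phi(a))\,dm_A(a)\,dx .
\]
The image $\phi(A)=H$ is a closed subgroup of $B$, and the pushforward of $m_A$ under $\phi$ is the normalized Haar measure $m_H$ on $H$, by translation invariance and uniqueness of Haar measure. So the inner integral equals $\int_H g(x,h)\,dm_H(h)$. This need not equal $\int_B g(x,b)\,dm_B(b)$, so the normalization $\int f=1$ can fail.

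We fix this by averaging before pulling back. Replace $g$ by its average over cosets of $H$; the new function $\tilde g$ is feasible only if $C_B$ is $H$-invariant, which it need not be. I would therefore use a different device. Write the objective in the scale-invariant form $g(0)/\int g$ over functions with $\int g>0$; this is equivalent to the original program. Then I would show that the pullback satisfies
\[
\int f=\sum_{\chi\in H^\perp}\int_{\R^k}\widehat g(\xi,\chi)\ldots\ge\int g,
\]
where the expression is the sum of $\widehat{g}(0,\chi)$ over $\chi\in\widehat B$ that are trivial on $H$, i.e.\ $\chi\circ\phi=\mathbf1$. In detail, $\int_H g(x,h)\,dm_H(h)=\sum_{\chi\in H^\perp}g_\chi(x)$, so
\[
\int f=\sum_{\chi\circ\phi=\mathbf 1}\widehat g(0,\chi)\ \ge\ \widehat g(0,\mathbf1)=\int g .
\]
The inequality holds because $\widehat g\ge0$, and in particular $\int f>0$. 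This Poisson-summation type identity is the obstacle step. It requires either absolute convergence of $\sum_\chi \widehat g(0,\chi)$, which holds because $g$ is continuous and positive definite, or a mollification as in Proposition~\ref{prop:app-mollify}.

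Finally, rescale: $f/\int f$ is feasible for $C_A$, and its objective value is
\[
\frac{g(0)}{\int f}\le g(0).
\]
Taking the infimum over $g$ completes the proof.
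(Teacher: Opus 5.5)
\textbf{Different route, but the key step fails as justified.} Your route differs from the paper's. The paper pushes a dual measure forward along $\Phi$, using only that $m_{\R^k}\otimes(m_H-m_B)$ is positive definite, and then needs strong duality (Theorem~\ref{thm:strong-duality-lca}) to return to the primal. Your pullback would prove the primal inequality directly. Your inequality $\sum_{\chi\in H^\perp}\widehat g(0,\chi)\ge\widehat g(0,\mathbf1)$ is the same positive-definiteness fact seen from the other side. However, the step you single out as the obstacle is not justified, and the reason you give is false.

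You never show that $f=g\circ\Phi$ is integrable on $\R^k\times A$. Without that, $f\notin K(\R^k\times A)$ and your Fubini step is unjustified. Moreover, $\sum_\chi\widehat g(0,\chi)$ need not converge for a continuous, integrable, positive definite $g$. Continuity gives only $\sum_\chi\int_{\R^k}\widehat g(\xi,\chi)\,d\xi=g(0)$. That bounds $\sum_\chi|g_\chi(x)|\le g(0)$ pointwise, but says nothing about $\sum_\chi\int_{\R^k}g_\chi=\sum_\chi\widehat g(0,\chi)$.

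\textbf{Counterexample.} On $\R\times\R/\Z$, let $\alpha_j(x)=(1-2^{-j}|x|)^+$, let $\beta_j$ be the $\Z$-periodization of $(1-2^j|y|)^+$, and put $g=\sum_{j\ge1}2^{-j}\alpha_j\otimes\beta_j$. This $g$ is continuous, has $\widehat g\ge0$ and $\int g=\sum_j2^{-j}=1$, and is feasible for the closed set $C_B=\{g=0\}$, with $0\notin C_B$. Yet $\sum_n\widehat g(0,n)=\int_\R g(x,0)\,dx=\sum_j1=\infty$. So its pullback along the trivial homomorphism ($H=\{0\}$) is not integrable.

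\textbf{The repair: damp $g$ before pulling back.} Take $\varphi\ge0$ smooth, even, compactly supported, with $\widehat\varphi\ge0$ and $\widehat\varphi(0)=1$, and set $g_\varepsilon(x,b)=g(x,b)\,\widehat\varphi(\varepsilon x)$. Only this multiplicative step of Proposition~\ref{prop:app-mollify} is usable; convolving would destroy $g\le0$ on $C_B$. Then:
\begin{itemize}
\item $g_\varepsilon(0)=g(0)$ and $g_\varepsilon\le0$ on $C_B$;
\item $\widehat{g_\varepsilon}(\cdot,\chi)=\widehat g(\cdot,\chi)*\varphi_\varepsilon\ge0$;
\item $|g_\varepsilon|\le g(0)\,\widehat\varphi(\varepsilon x)$.
\end{itemize}
Hence $g_\varepsilon\circ\Phi$ is continuous, integrable, positive definite, and nonpositive on $C_A$. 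Expand $\int_Hg(x,h)\,dm_H(h)=\sum_{\chi\in H^\perp}g_\chi(x)$ and apply dominated convergence with the pointwise bound $\sum_\chi|g_\chi(x)|\le g(0)$. This gives
\[
\int g_\varepsilon\circ\Phi=\sum_{\chi\in H^\perp}\big(\widehat g(\cdot,\chi)*\varphi_\varepsilon\big)(0)\ \ge\ \big(\widehat g(\cdot,\mathbf1)*\varphi_\varepsilon\big)(0)=\int g_\varepsilon\longrightarrow1 .
\]
So $g_\varepsilon\circ\Phi\big/\!\int g_\varepsilon\circ\Phi$ is feasible for $C_A$, with value at most $g(0)/\int g_\varepsilon\to g(0)$.

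With this step inserted your argument is complete. It then has the advantage over the paper's proof of not depending on strong duality. The paper never meets the integrability issue, because pushforwards of measures always exist.
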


\begin{proof}
Set $H=\phi(A)$, a closed subgroup of $B$, and let $m_H$ denote its
normalized Haar measure, viewed as a measure on $B$.  We prove the
inequality on the dual side.  Let $\mu_A$ be dual feasible on
$\R^k\times A$, put
$z=\widehat\mu_A(\{(0,\mathbf1)\})$, and set
\[
 T_A=\mu_A-zm_{\R^k\times A}\in K^*(\R^k\times A).
\]
Pushforward by a homomorphism preserves positive definiteness, and
$\Phi_*m_{\R^k\times A}=m_{\R^k}\otimes m_H$.  Thus
\[
 \Phi_*T_A
 =\Phi_*\mu_A-z(m_{\R^k}\otimes m_H)
 \in K^*(\R^k\times B).
\]
Moreover,
\[
 m_{\R^k}\otimes(m_H-m_B)
\]
is positive definite: its Fourier transform is the nonnegative measure
supported at $\xi=0$ whose weight at $\chi\in\widehat B$ is
\[
 \mathbf 1_{H^\perp}(\chi)-\mathbf 1_{\{\mathbf1\}}(\chi).
\]
Consequently
\[
 \Phi_*\mu_A-zm_{\R^k\times B}
 =\Phi_*T_A+z\,m_{\R^k}\otimes(m_H-m_B)
 \in K^*(\R^k\times B).
\]
The pushed-forward measure is nonnegative and is supported on
$\{0\}\cup\Phi(C_A)\subseteq\{0\}\cup C_B$.  Moreover, no point of
$C_A$ maps to $0$, since $0\notin C_B$, so
$(\Phi_*\mu_A)(\{0\})=1$.  The last display shows that the Fourier atom
of $\Phi_*\mu_A$ at $(0,\mathbf1)$ is at least $z$.  Hence
$D_{\R^k\times A}(C_A)\leq D_{\R^k\times B}(C_B)$, and
Theorem~\ref{thm:strong-duality-lca} gives the stated primal inequality.
\end{proof}

In particular, Proposition~\ref{prop:lca-functoriality} applies when a
finite phase group $G$ is included in a torus $\R^m/L$.  Whenever the
admissible differences for $G$ map into those for the torus, the discrete
phase bound is at most the full cylinder bound.

\begin{proposition}[Complementary slackness]
\label{prop:lca-complementary-slackness}
Let $f$ be primal optimal and let $\mu$ be dual optimal; by
Theorems~\ref{thm:lca-primal-attainment}
and~\ref{thm:dual-attainability} such a pair exists whenever
$C=X\setminus U$ with $m_X(U)<\infty$ and
$C=\overline{\operatorname{int}C}$.  Set
\[
 \nu=\widehat\mu-D_X(C)\,\delta_{(0,\mathbf1)}\geq0 .
\]
Then $f$ is $\mu$-integrable,
$\widehat f$ is $\nu$-integrable, and
\[
 \int_C f\,d\mu=0
 \qquad\hbox{and}\qquad
 \int_{\widehat X}\widehat f\,d\nu=0 ;
\]
that is, $f=0$ $\mu|_C$-almost everywhere and $\widehat f=0$
$\nu$-almost everywhere.  When $\mu|_C$ and $\nu$ are atomic, $f$
vanishes at every point of $\operatorname{supp}\mu|_C$ and
$\widehat f$ at every point of $\operatorname{supp}\nu$.
\end{proposition}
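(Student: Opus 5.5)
Since $f$ and $\mu$ are both optimal, strong duality (Theorem~\ref{thm:strong-duality-lca}) gives $f(0)=P_X(C)=D_X(C)=z$, where $z=\widehat\mu(\{(0,\mathbf1)\})$. The plan is to turn the formal weak-duality identity
\[
 f(0)-z=\langle T,f\rangle-\int_C f\,d\mu,\qquad T=\mu-z\,m_X,
\]
into a rigorous identity in which both terms on the right are nonnegative: $\langle T,f\rangle=\int_{\widehat X}\widehat f\,d\nu\ge0$ and $-\int_C f\,d\mu\ge0$. Since the left side vanishes, each term must vanish. Then $f\le0$ on $C$ together with $\int_C f\,d\mu=0$ forces $f=0$ $\mu|_C$-a.e. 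Likewise $\widehat f\ge0$ (Bochner) together with $\int\widehat f\,d\nu=0$ forces $\widehat f=0$ $\nu$-a.e. The atomic statement then follows because an atom carries positive mass.

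To make this rigorous I will use the mollification of Proposition~\ref{prop:app-mollify}. Choose an approximate identity $\rho_\varepsilon=\psi_\varepsilon*\widetilde{\psi_\varepsilon}$ of Schwartz–Bruhat type on $X$, which is positive definite with $\widehat{\rho_\varepsilon}=|\widehat{\psi_\varepsilon}|^2\to1$ pointwise and boundedly. Also choose a Schwartz cutoff $g_R$ with $\widehat g_R\ge0$, $g_R(0)=1$, and $g_R\to1$ locally uniformly. Set $f_{\varepsilon,R}=(f\cdot g_R)*\rho_\varepsilon$, which is Schwartz and positive definite. The Parseval identity for tempered $T$ then gives
\[
 \int_X f_{\varepsilon,R}\,d\mu-z\int f_{\varepsilon,R}\,dm_X
 =\int_{\widehat X}\widehat{f_{\varepsilon,R}}\,d\nu .
\]
Here $\widehat{f_{\varepsilon,R}}=(\widehat f*\widehat g_R)\,\widehat\rho_\varepsilon\ge0$.

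Now pass to the limit. On the Fourier side everything is nonnegative, so Fatou's lemma gives $\int\widehat f\,d\nu\le\liminf\int\widehat{f_{\varepsilon,R}}\,d\nu$. On the physical side, $\int f_{\varepsilon,R}\,dm_X\to\int f=1$ by dominated convergence in $L^1$, and the atom contributes $f_{\varepsilon,R}(0)\to f(0)$. For the remaining part, write $f=f^+-f^-$ on $C$ with $f^+=0$ on $C$. The main obstacle is controlling $\int_C f_{\varepsilon,R}\,d\mu$: mollification smears the positive part of $f$ from $U$ into $C$, and $\mu$ may charge points of $C$ arbitrarily close to $\partial U$. I would try to handle this through the hypothesis $C=\overline{\operatorname{int}C}$ together with translation-boundedness of the positive definite measure $\mu$ (\cite[Section~4]{AGdL}). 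Translation-boundedness gives a uniform bound $\mu(p+V)\le c_V$, and with it the positive-part contribution $\int_C (f^+\!*\rho_\varepsilon)\,d\mu$ is bounded by $c_V\cdot\sup_{d(p,C)\le\varepsilon}f^+(p)$ summed over a locally finite cover. This tends to $0$ because $f$ is continuous and $f^+=0$ on $C$. A cleaner alternative would be to replace $f$ beforehand by a slightly shrunk feasible function $f_\eta\le0$ on an $\eta$-neighborhood of $C$, at a cost $o(1)$ in objective, exactly as in the primal attainment argument. For the negative part, Fatou's lemma yields $\int_C f^-\,d\mu\le\liminf\int_C (f^-)_{\varepsilon,R}\,d\mu$, so in particular $f\in L^1(\mu)$.

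Combining these limits gives
\[
 0\le\int\widehat f\,d\nu+\int_C f^-\,d\mu\le f(0)-z=0,
\]
which simultaneously establishes the integrability of $f$ against $\mu$ and of $\widehat f$ against $\nu$, and the two vanishing identities.
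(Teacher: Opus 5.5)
Your proposal is correct and follows essentially the paper's route: strong duality gives $f(0)=z$, and the mollified Parseval pairing of $T=\mu-z\,m_X$ with a cut-off, smoothed copy of $f$, followed by Fatou on both the $\mu$-side and the $\nu$-side, is exactly the argument of Proposition~\ref{prop:app-mollify} to which the paper's proof reduces. The ``smearing'' obstacle you flag does not actually arise. Let the mollifier shrink first with the cutoff fixed, as the paper does: dominated convergence against the tempered $\mu$ (using the rapid decay supplied by the cutoff) gives the limit directly, so no translation-boundedness estimate is needed. After that, only the sign of $f\cdot g_R$ on $C$ matters. This requires choosing $g_R\geq0$, for instance $g_R(x)=\widehat\varphi(x/R)$ with $\varphi,\widehat\varphi\geq0$ as in the paper; you need that condition in any case for your Fatou step on the negative part.
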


\begin{proof}
Theorem~\ref{thm:strong-duality-lca} gives
$f(0)=P_X(C)=D_X(C)=\widehat\mu(\{(0,\mathbf1)\})$.
Proposition~\ref{prop:app-mollify} applies with $\mu$ in the role of
its $T$ and $\mu|_C$ in the role of its $\mu$: dual feasibility gives
$\mu=\delta_0+\mu|_C$ with $\mu|_C\geq0$ supported on $C$, and
$\widehat\mu=D_X(C)\delta_{(0,\mathbf1)}+\nu$.
\end{proof}

\subsection{Packings as dual certificates}\label{sec:config-dual}

Periodic packing configurations produce dual-feasible measures through their two-point
autocorrelations, and arbitrary packings are approximated by
periodic ones with increasingly large periods.  In this way, the dual linear program
can be viewed as a relaxation of the geometric packing problem.

Call a configuration $P\subset X$ \emph{admissible for $C$} if all its
nonzero differences lie in $C$.  Let
$Q_R=[-R/2,R/2)^k$ and define its upper base density by
\[
 \delta_C(P)=\limsup_{R\to\infty}\ \sup_{t\in\R^k}
 \frac{\#\big(P\cap((t+Q_R)\times A)\big)}{R^k}.
\]
For the density statements below, suppose the forbidden differences have
bounded base projection: there is $R_C<\infty$ such that
\[
 \{(x,a)\in X:|x|\geq R_C\}\subseteq C.
\]
This condition holds in every packing application in this paper.
Call $P$ \emph{$\Lambda$-periodic}, for a
full-rank lattice $\Lambda\subset\R^k$, if $P$ is invariant under
$\Lambda\times\{0\}$ and meets a fundamental domain in finitely many
points.  If such a $P$ has $N$ orbits, then
$\delta_C(P)=N/\covol(\Lambda)$.  In the cylinder and colored packing
problems, $\delta_C(P)$ is the base density $\delta_B$ of the introduction.

\begin{lemma}[Autocorrelation certificates]\label{lem:config-dual}
Let $P$ be a nonempty $\Lambda$-periodic configuration admissible for $C$,
with density $\delta=\delta_C(P)$ and orbit representatives
$p_1,\dots,p_N$.  Its volume-averaged autocorrelation
\[
 \gamma=\frac1{\covol(\Lambda)}
 \sum_{i,j=1}^N\sum_{\lambda\in\Lambda}\delta_{p_i-p_j+(\lambda,0)}
\]
defines the dual feasible measure $\alpha=\gamma/\delta$, whose objective
value is $\widehat\alpha(\{(0,\mathbf1)\})=\delta$.
In particular $D_X(C)\geq\delta_C(P)$ for every such configuration.
\end{lemma}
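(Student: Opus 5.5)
We check the four dual constraints for $\alpha=\gamma/\delta$ in turn and then compute its Fourier atom at $(0,\mathbf1)$; the only substantive point is the Fourier computation.

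\textbf{Support, atom at $0$, temperedness.} Each atom of $\gamma$ sits at $p_i-p_j+(\lambda,0)$. This is the difference of two points of $P$, since $P$ is $\Lambda\times\{0\}$-invariant. Such a difference is either $0$ or a nonzero difference of $P$, hence lies in $\{0\}\cup C$ by admissibility.

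The difference is $0$ exactly when $i=j$ and $\lambda=0$. It cannot vanish with $i\ne j$, because the $p_i$ represent distinct orbits, and for $i=j$ it vanishes only at $\lambda=0$. So $\gamma(\{0\})=N/\covol(\Lambda)=\delta$, and therefore $\alpha(\{0\})=1$.

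$\gamma$ is a finite sum of translated lattice combs $\sum_\lambda\delta_{(\lambda,0)+q}$ on $\R^k\times A$. Each comb is a nonnegative, translation-bounded measure, so $\alpha$ is a nonnegative tempered Radon measure.

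\textbf{Fourier transform.} Write $\gamma=\frac1{\covol(\Lambda)}\,\rho*\widetilde\rho$ with $\rho=\sum_i\delta_{p_i}$, $\widetilde\rho(E)=\rho(-E)$, and convolve with the comb $\omega=\sum_{\lambda\in\Lambda}\delta_{(\lambda,0)}$. Poisson summation on $\R^k$, with $A$ contributing trivially because the comb sits in $\{0\}\times A$, gives
\[
 \widehat\omega=\covol(\Lambda)^{-1}\sum_{\eta\in\Lambda^*}\sum_{\chi\in\widehat A}\delta_{(\eta,\chi)}.
\]
Equivalently, this is $\covol(\Lambda)^{-1}\sum_{\eta\in\Lambda^*}\delta_\eta\otimes m_{\widehat A}$, where $m_{\widehat A}$ is counting measure. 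This matches the normalized Haar measure on $A$: the pairing of $\sum_\lambda\delta_{(\lambda,0)}$ with $g(x)h(a)$ is $\sum_\lambda g(\lambda)h(0)$, and $h(0)=\sum_\chi\widehat h(\chi)$.

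Multiplying by $\widehat\rho\,\overline{\widehat\rho}=|\widehat\rho|^2$ then gives
\[
 \widehat\gamma=\covol(\Lambda)^{-2}\sum_{\eta\in\Lambda^*}\sum_{\chi\in\widehat A}\Big|\sum_{i=1}^N\overline{\chi(a_i)}\,e^{-2\pi i\langle\eta,x_i\rangle}\Big|^2\delta_{(\eta,\chi)},
\]
where $p_i=(x_i,a_i)$. This is a nonnegative, tempered, discrete measure.

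To make the step rigorous, I would avoid multiplying distributions. Instead, pair $\gamma$ with $\varphi*\widetilde\varphi$ for Schwartz–Bruhat $\varphi$ and compute via Poisson summation on $\R^k$ combined with Fourier expansion on $A$. This yields $\langle\widehat\gamma,|\widehat\varphi|^2\rangle$ with the explicit atomic measure above.

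\textbf{Objective value.} The atom of $\widehat\gamma$ at $(0,\mathbf1)$ is $N^2/\covol(\Lambda)^2=\delta^2$. Hence $\widehat\alpha(\{(0,\mathbf1)\})=\delta^2/\delta=\delta$. Since $\alpha$ is dual feasible, $D_X(C)\ge\delta_C(P)$.

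The main obstacle is the normalization bookkeeping in the Poisson formula on $\R^k\times A$: the mixed Haar normalizations, normalized on $A$ and counting on $\widehat A$, must give exactly $N^2/\covol(\Lambda)^2$ at the trivial frequency. I would settle this by evaluating $\langle\gamma,\varphi\rangle$ on a product test function and comparing both sides directly.
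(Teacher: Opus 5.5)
Your proposal is correct and follows essentially the paper's argument: the diagonal terms $i=j$, $\lambda=0$ give the atom $\delta\,\delta_0$, every other atom is a nonzero difference of $P$ and so lies in $C$, and Poisson summation for $\Lambda\times\{0\}$ gives $\widehat\gamma=\covol(\Lambda)^{-2}\sum|S(\xi,\chi)|^2\delta_{(\xi,\chi)}$ with trivial atom $\delta^2$. Your route goes through $\widehat\omega$ and test-function pairings, which is equivalent to the paper's periodization identity. One small slip: the comb $\omega$ sits in $\R^k\times\{0\}$ (at the identity of $A$), not in $\{0\}\times A$, although your formula for $\widehat\omega$ is the correct one for the former.
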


\begin{proof}
The diagonal terms ($i=j$, $\lambda=0$) contribute
$(N/\covol(\Lambda))\,\delta_0=\delta\,\delta_0$.  Every other supporting
point is a difference of two distinct points of $P$ and therefore lies in
$C$.  Periodicity gives polynomially bounded local point counts, so
$\gamma$ is a tempered nonnegative measure and
$\alpha:=\gamma/\delta$ is supported on $\{0\}\cup C$ with
$\alpha(\{0\})=1$.

We compute the Fourier transform by Poisson summation for the cocompact
subgroup $\Lambda\times\{0\}$, valid with absolute convergence for
Bruhat--Schwartz functions: writing $q=(x,a)$ and
$p_i=(b_i,a_i)$,
\[
 \sum_{\lambda\in\Lambda}\psi(q+(\lambda,0))
 =\frac1{\covol(\Lambda)}
  \sum_{(\xi,\chi)\in\Lambda^*\times\widehat A}
  \widehat\psi(\xi,\chi)\,e^{2\pi i\langle\xi,x\rangle}\chi(a)
 \qquad(\psi\in\mathcal S(X)).
\]
Summing over the $N^2$ pairs gives
\[
 \langle\gamma,\psi\rangle
 =\frac1{\covol(\Lambda)^2}
  \sum_{(\xi,\chi)\in\Lambda^*\times\widehat A}
  |S(\xi,\chi)|^2\,\widehat\psi(\xi,\chi),
 \qquad
 S(\xi,\chi)=\sum_{i=1}^N e^{2\pi i\langle\xi,b_i\rangle}\chi(a_i).
\]
Hence
\[
 \widehat\gamma
 =\frac1{\covol(\Lambda)^2}
  \sum_{(\xi,\chi)\in\Lambda^*\times\widehat A}
  |S(\xi,\chi)|^2\,\delta_{(\xi,\chi)},
\]
a nonnegative atomic measure whose atom at the trivial character
$(0,\mathbf1)$ equals $N^2/\covol(\Lambda)^2=\delta^2$.  Hence
$\widehat\alpha=\delta^{-1}\widehat\gamma$ is nonnegative and has atom
$\delta$ at $(0,\mathbf1)$.  Thus $\alpha$ is dual feasible with value
$\delta$.
\end{proof}

\begin{corollary}[Density bound]\label{cor:density-bound}
Every configuration $P$ admissible for $C$ satisfies
\[
 \delta_C(P)\leq P_X(C).
\]
\end{corollary}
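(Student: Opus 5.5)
The plan is to reduce an arbitrary admissible configuration to a periodic one and then chain Lemma~\ref{lem:config-dual} with weak duality. Weak duality (the justified form, via Proposition~\ref{prop:app-mollify}, or simply Theorem~\ref{thm:strong-duality-lca}) gives $D_X(C)\le P_X(C)$. Combined with Lemma~\ref{lem:config-dual}, this yields
\[
 \delta_C(P')\le D_X(C)\le P_X(C)
\]
for every nonempty periodic admissible $P'$. If $\delta_C(P)=0$ there is nothing to prove, since $P_X(C)\ge0$: any feasible $f$ has $f(0)\ge \widehat f\ge0$ integrated, because $f(0)=\int\widehat f\ge0$. So it remains to approximate $\delta_C(P)$ from below by densities of periodic admissible configurations.

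\textbf{Periodization.} Fix $\varepsilon>0$ and choose $R$ large and $t\in\R^k$ with
\[
 \#\bigl(P\cap((t+Q_R)\times A)\bigr)\ge(\delta_C(P)-\varepsilon)R^k,
\]
which is possible by the definition of the limsup. Let $F=P\cap((t+Q_R)\times A)$; this set is finite, since the packing applications have locally finite configurations. In general one should note that admissibility with $C=X\setminus U$ and $U$ open forces local finiteness. If local finiteness fails, $\delta_C(P)$ may be infinite, and the argument then gives $P_X(C)=\infty$ by the same bound applied to large finite subsets. Set $\Lambda=(R+R_C)\Z^k$ and $P'=F+(\Lambda\times\{0\})$.

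\textbf{Admissibility of $P'$.} Take two distinct points $p+(\lambda,0)$ and $q+(\lambda',0)$ of $P'$. If $\lambda=\lambda'$, their difference $p-q$ is a nonzero difference of $P$ and so lies in $C$. If $\lambda\ne\lambda'$, the base component of the difference has norm at least $R+R_C-R\sqrt{k}$ in the worst case. To handle this cleanly, I would instead use the period $R\sqrt{k}+R_C$, or more simply $\Lambda=(R+R_C)\Z^k$ with $\sup$-norm separation and $R_C$ replaced by $\sqrt{k}R_C$. This guarantees that the base separation is at least $R_C$, so the difference lies in $C$ by the bounded-base-projection hypothesis.

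\textbf{Density and conclusion.} The density of $P'$ is
\[
 \frac{\#F}{(R+R_C')^k}\ge(\delta_C(P)-\varepsilon)\Bigl(\frac{R}{R+R_C'}\Bigr)^k .
\]
Letting $R\to\infty$ along the limsup sequence and then $\varepsilon\to0$ gives $\delta_C(P)\le P_X(C)$.

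The main subtlety is weak duality itself, i.e.\ justifying $f(0)\ge\widehat\alpha(\{(0,\mathbf1)\})$ for merely continuous integrable $f$. This is exactly what the mollification proposition, or strong duality, supplies. For the atomic periodic $\alpha$ one could also verify it directly: Poisson summation applied to a Fejér-type mollified $f$ gives
\[
 \sum_{\text{pairs}}f(p_i-p_j+\lambda)\ge\frac{N^2}{\covol\Lambda}\int f ,
\]
and the left-hand side is at most $N f(0)$ because $f\le0$ on $C$.
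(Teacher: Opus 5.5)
Your proposal is correct and follows the paper's route: periodize a finite patch with period $R+O(R_C)$, apply Lemma~\ref{lem:config-dual} together with duality (weak duality indeed suffices), and let $R\to\infty$. Your hesitation in the admissibility step is unnecessary. Since $|x|\geq|x_i|$, a coordinate of $\lambda-\lambda'$ of absolute value at least $R+R_C$, against the corresponding coordinate of $b_p-b_q$ lying in $(-R,R)$, already gives $|x|>R_C$ with period $R+R_C$ (the paper uses $R+2R_C$). Do not use your first alternative period $R\sqrt{k}+R_C$, since it would only yield $\delta_C(P)\leq k^{k/2}P_X(C)$.
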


\begin{proof}
The periodic case follows from Lemma~\ref{lem:config-dual} and strong
duality.  For arbitrary $P$, fix a translate $t+Q_R$ and repeat the
finite patch $P\cap((t+Q_R)\times A)$ with period
$T\Z^k$, where $T=R+2R_C$.  Differences within one copy lie in $C$.
Between distinct copies, some base coordinate has absolute value at least
$T-R=2R_C$, so those differences lie in $C$ as well.  The resulting
periodic configuration has density
\[
 \frac{\#\big(P\cap((t+Q_R)\times A)\big)}{T^k}
 \leq P_X(C).
\]
Taking the supremum over $t$, multiplying by $(T/R)^k$, and then letting
$R\to\infty$ proves the claim.
\end{proof}

\begin{remark}[Optimal configurations and slackness]
\label{rem:config-slackness}
Suppose $P$ is a periodic configuration admissible for $C$, while $f$
satisfies the primal sign and positivity conditions and
$\widehat f(0,\mathbf1)>0$.  If
\[
 \delta_C(P)=\frac{f(0)}{\widehat f(0,\mathbf1)}=P_X(C),
\]
then the autocorrelation certificate of Lemma~\ref{lem:config-dual} and the
normalization $f/\widehat f(0,\mathbf1)$ are dual and primal optimal.  The
mollification argument in
Proposition~\ref{prop:app-mollify} yields the pointwise complementary
slackness conditions
\[
 f(p-p')=0\quad(p\ne p'\in P),
 \qquad
 \widehat f=0\ \text{on}\ \operatorname{supp}\widehat\gamma
 \setminus\{(0,\mathbf1)\},
\]
that is, $f$ vanishes on the difference set of $P$ and $\widehat f$
vanishes on its Bragg spectrum (the support of its Fourier-transformed
autocorrelation) away from the trivial character.  These are
the conditions exploited in Sections~\ref{sec:d8fluid-short}
and~\ref{sec:e6d4_collapse}.  Corollary~\ref{cor:density-bound} applies
to arbitrary configurations, while this pointwise slackness statement
concerns periodic optimizers.
\end{remark}

\section{Exactly Solved Cases}\label{sec:solved}

\subsection{One-Dimensional Aligned Packings ($k=1$)}\label{sec:one_dim}

Let $R(L)$ be the covering radius of the fiber lattice $L \subset \R^m$ and set
\[
  g = \sqrt{r^2 - R(L)^2}.
\]
We assume $R(L) < r$, so $g > 0$.

Call a phase $\bar y\in\R^m/L$ a \emph{deep hole} if its distance from
$L$ is $R(L)$.  The \emph{deep-hole graph} has phase classes as vertices,
with an edge between two phases when their difference is a deep hole.

\begin{theorem}[Triangle Certificate]\label{thm:triangle}
For base dimension $k=1$, the function
\[
  f_\triangle(x,\bar y) = \left(1-\frac{|x|}{g}\right)^+
\]
is feasible for the cylinder LP and certifies the base density bound $\delta_B \le 1/g$.
\end{theorem}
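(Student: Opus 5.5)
We verify the three primal conditions of Definition~\ref{def:c-constrained-lp} for $f_\triangle$ on $X=\R\times\T$, $\T=\R^m/L$, with $C=\{(x,\bar y):d_X((x,\bar y),0)\ge r\}$. Then we normalize by $\int_X f_\triangle\,dm_X$ and read off the bound.

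\textbf{Positive definiteness.} The function $f_\triangle$ does not depend on the fiber variable. Hence its fiber Fourier coefficients are $(f_\triangle)_\chi=0$ for every $\chi\ne\mathbf1$, and $(f_\triangle)_{\mathbf1}(x)=(1-|x|/g)^+$. The last function is the classical Fej\'er triangle, the autocorrelation $g^{-1}\mathbf 1_{[-g/2,g/2]}*\mathbf 1_{[-g/2,g/2]}$. Its Fourier transform is therefore
\[
g\left(\frac{\sin\pi g\xi}{\pi g\xi}\right)^2\ge0 .
\]
So $\widehat f_\triangle(\xi,\chi)\ge0$ on all of $\widehat X=\R\times L^*$, and $f_\triangle\in K(X)$ by Bochner. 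The function is also continuous and compactly supported, hence integrable.

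\textbf{Sign condition.} This is the only geometric step, and I expect it to be the main point. Let $(x,\bar y)\in C$, so $|x|^2+\min_{\ell\in L}|y-\ell|^2\ge r^2$. By the definition of the covering radius, $\min_{\ell}|y-\ell|\le R(L)$ for every $y$. It follows that
\[
|x|^2\ge r^2-R(L)^2=g^2,
\]
so $|x|\ge g$ and $f_\triangle(x,\bar y)=0\le0$. The hypothesis $R(L)<r$ is used exactly here, to make $g>0$ and the triangle nondegenerate.

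\textbf{Normalization and value.} Haar measure on $\T$ is normalized, so
\[
\int_X f_\triangle\,dm_X=\int_\R(1-|x|/g)^+\,dx=g=\widehat f_\triangle(0,\mathbf1).
\]
The rescaled function $f=f_\triangle/g$ is primal feasible with $f(0)=1/g$. Hence $P_X(C)\le 1/g$, and Corollary~\ref{cor:density-bound} gives $\delta_B=\delta_C(P)\le 1/g$ for every aligned $L$-fibered packing with exclusion distance $r$.

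For the sharpness claims in Table~\ref{tab:exact-values} one would also exhibit packings with $\delta_B=1/g$. These are layers at spacing $g$ in the base, with phases advancing by a deep hole, i.e.\ a walk in the deep-hole graph. The theorem as stated needs only the feasibility and the bound, so that construction is not part of this proof.
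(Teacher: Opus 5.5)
Your proof is correct and follows the paper's argument essentially step for step. The covering-radius bound forces $|x|\ge g$ on $C$; fiber-independence kills every nontrivial channel, leaving the Fej\'er kernel with transform $g\bigl(\sin(\pi g\xi)/(\pi g\xi)\bigr)^2$; normalizing by $\widehat f(0,\mathbf1)=g$ gives $P_X(C)\le 1/g$, and Corollary~\ref{cor:density-bound} transfers this bound to $\delta_B$.
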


\begin{proof}
Since every point in the torus $\T$ is within distance $R(L)$ of $0$, the condition $d_X((x,\bar y), 0) \ge r$ implies $|x|^2 + R(L)^2 \ge r^2$, which simplifies to $|x| \ge g$. Consequently, $f_\triangle$ vanishes on the region $d_X(p,0) \ge r$.

Since $f_\triangle(x, \bar y)$ is independent of $y$, its only nonzero fiber Fourier frequency is $w=0$. The $w=0$ channel is the ordinary one-dimensional Fourier transform:
\[
  \widehat{f_0}(\xi) = g \left(\frac{\sin(\pi g\xi)}{\pi g\xi}\right)^2 \ge 0.
\]
All other channels $\widehat{f_w}(\xi)$ for $w \ne 0$ vanish identically. Normalizing by $\widehat{f_0}(0) = g$, the certified value is $f(0,0)/\widehat{f_0}(0) = 1/g$.

Thus $P_X(C)\leq1/g$, while Corollary~\ref{cor:density-bound} bounds the
base density of every aligned $L$-fibered packing by $P_X(C)$
and hence by $1/g$.
\end{proof}

\begin{theorem}[Rigidity for $k=1$]\label{thm:rigidity}
Let $r \ge 2R(L)/\sqrt{3}$. Every periodic $r$-separated aligned $L$-fibered packing attaining the optimal base density $\delta_B = 1/g$ has base set $B$ equal to an arithmetic progression of gap $g$, and every pair of consecutive fibers has phase difference a deep hole of $L$. Consequently, the optimal aligned $L$-fibered packings are exactly the bi-infinite walks on the deep-hole graph of $L$, up to isometry.
\end{theorem}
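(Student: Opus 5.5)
The argument is elementary, so I would not invoke the duality machinery, although the first step is exactly the complementary slackness condition of Remark~\ref{rem:config-slackness} for $f_\triangle$. Let $P$ be a periodic $r$-separated aligned $L$-fibered packing with $\delta_B=1/g$, written as orbit representatives $(b_i,t_i)$ invariant under base translation by $T\Z$.

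\emph{Step 1: the base set is $g$-separated, with no multiplicity.} I would show that distinct orbits satisfy $|b_i-b_j|\ge g$, in particular $b_i\ne b_j$. This is the pointwise content of $f_\triangle(p-p')=0$. Every phase $\bar y$ satisfies $\min_\ell|y-\ell|\le R(L)$, so the packing condition forces $|b_i-b_j|^2\ge r^2-R(L)^2=g^2$. Hence the base projection $B$ is a genuine set, not a multiset, and its points are pairwise at distance at least $g$.

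\emph{Step 2: arithmetic progression.} Let $N$ be the number of orbits per period $T$. Order the points of $B$ within one period cyclically. There are $N$ consecutive gaps, each at least $g$, and they sum to exactly $T$. Since $\delta_B=N/T=1/g$, we get $T=Ng$, so every gap equals $g$. Thus $B=b_0+g\Z$.

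\emph{Step 3: deep holes.} Take consecutive fibers with $|b_i-b_j|=g$. The packing inequality gives $\min_\ell|t_i-t_j-\ell|^2\ge r^2-g^2=R(L)^2$. Since no phase lies farther than $R(L)$ from $L$, equality holds, so $t_i-t_j$ is a deep hole. Consequently the sequence of phases along $b_0+g\Z$ is a bi-infinite walk in the deep-hole graph.

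\emph{Step 4: converse.} Given any bi-infinite walk $(\bar t_n)$ in the deep-hole graph, I would place the fiber $(b_0+ng,\,t_n+L)$ for each $n\in\Z$ and check separation case by case:
\begin{itemize}
\item Within a fiber, separation holds because $\min(L)\ge r$.
\item Consecutive fibers are at distance $\sqrt{g^2+R(L)^2}=r$.
\item Fibers with $|n-n'|\ge2$ have base distance at least $2g$.
\end{itemize}
In the last case, $4g^2\ge r^2$ is equivalent to $3r^2\ge4R(L)^2$, which is exactly the hypothesis $r\ge2R(L)/\sqrt3$. This is the only place the hypothesis enters, and it is the step I would be most careful with. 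Without it, non-adjacent layers would impose extra constraints on the phases, and the classification as arbitrary walks would fail.

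The resulting packing has base density $1/g$, which is optimal by Theorem~\ref{thm:triangle}. Together with Steps 1--3, this shows that the optimal packings are exactly these walks, up to isometry (the choice of $b_0$ and an overall phase translation).
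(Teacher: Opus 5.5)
Your proposal is correct and follows the paper's proof essentially step for step. The bound $\min_\ell|\Delta t-\ell|\le R(L)$ forces distinct base points with gaps at least $g$, the period count $T\ge Ng$ forces every gap to equal $g$, equality then makes each consecutive phase difference a deep hole, and $r\ge 2R(L)/\sqrt3$ gives $2g\ge r$, so non-consecutive layers impose no constraint. Your converse is only slightly more explicit than the paper's: it checks within-fiber separation via $\min(L)\ge r$ and verifies that consecutive layers sit at distance exactly $r$.
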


\begin{proof}
Two fibers over the same base point are at cylinder distance $\le R(L) < r$, so base points must be distinct. A consecutive base gap $x$ between fibers with phase difference $\Delta t$ satisfies $x^2 + \min_\ell |\Delta t - \ell|^2 \ge r^2$. Since $\min_\ell |\Delta t - \ell| \le R(L)$, we have $x^2 \ge r^2 - R(L)^2 = g^2$, meaning all gaps are at least $g$. In a period of length $T$ containing $N$ points, $T \ge N g$, so attaining the bound $\delta_B = 1/g$ forces every consecutive gap to equal $g$.

Equality in the gap forces $\min_\ell |\Delta t - \ell| \ge \sqrt{r^2 - g^2} = R(L)$. Since $R(L)$ is the maximum possible covering distance, $\Delta t$ must be a deep hole of $L$. Non-consecutive layers are separated in the base by at least $2g$, and since $r \ge 2R(L)/\sqrt{3}$, we have $2g \ge r$. Thus, non-consecutive layers satisfy the packing condition for any phase choices.
\end{proof}

This recovers the aligned $k=1$ content of the Conway--Sloane laminated tower. For example, $A_2$ over $A_1$ is the unique alternating deep-hole walk; Barlow packings arise from $A_2$ deep-hole walks; and the $\Lambda_5^i$ family arises from walks on the deep-hole graph of $D_4$. This also extends to higher-dimensional fibers: for example, laminating the 17-dimensional lattice $\Lambda_{17}$ over the 16-dimensional Barnes--Wall lattice $BW_{16}$ (with covering radius $R(BW_{16}) = 1$ at packing distance $r=\sqrt{2}$) yields base gap $g = \sqrt{2 - 1^2} = 1$, and its base density is certified sharp by the triangle function.

For reference, the covering-radius and gap data used for the
root-lattice entries in the one-dimensional part of
Table~\ref{tab:exact-values} are as follows (in every case $r^2=2$):
\[
\begin{array}{c|cccccccc}
L&A_1&A_2&D_3&D_4&D_5&E_6&E_7&D_7\\ \hline
R(L)^2&\frac12&\frac23&1&1&\frac54&\frac43&\frac32&
\frac74\\
g^2=r^2-R(L)^2&\frac32&\frac43&1&1&\frac34&\frac23&
\frac12&\frac14
\end{array}
\]
Thus the corresponding values $1/g$ are exactly the first eight entries
of Table~\ref{tab:exact-values}.

\subsection{Sharpness of Fluid Diamonds for the Colored $D_8$-Fibered Problem in Dimension 9}
\label{sec:d8fluid-short}

The scalar triangle theorem above assumes $R(L)<r$.  For $L=D_8$ at
$r=\sqrt2$ one has $R(D_8)=r$, so $g=0$ and every test function constant
in the fiber phase fails to give a bound.  Nevertheless,
the problem is exactly solvable when phases are restricted to
\[
 C=D_8^*/D_8\cong(\Z/2)^2=\{0,1,g,g'\},\qquad g+g'=1.
\]
The four color-difference thresholds are
\[
 d_0=\sqrt2,\qquad d_1=1,\qquad d_g=d_{g'}=0.
\]

For $s\in[0,1)$, define the period-two colored configuration
\begin{equation}\label{eq:fluid-family}
 P_s=\{(2n,0),(2n+s,g),(2n+1,1),(2n+1+s,g'):
 n\in\Z\}\subset\R\times C.
\end{equation}
Set $w_s=(1/2,\ldots,1/2,s)\in\R^9$.  These are the aligned descriptions
of the classical fluid diamonds
$D_9\cup(D_9+w_s)$ from~\cite[Section~9C and Lemma~9C]{CS95}.  The parameter $s$ slides
one diamond subpacking relative to the other while leaving the four
$D_8$ cosets fixed.  At
$s=0$ pairs of layers merge into $E_8$ layers, giving the
$\Lambda_9$-type endpoint, while $s=1/2$ gives the lattice $D_9^+$.

\begin{theorem}[Discrete fluid-diamond LP]\label{thm:d8fluidlp-short}
The translation-invariant four-color LP for aligned $D_8$-fibered
packings of $\R^9$ is sharp at base density $2$.  Every $P_s$ in
\eqref{eq:fluid-family} attains this value and supplies a dual-optimal
autocorrelation measure.  On the primal side, with
\[
 T(x)=(1-|x|)^+,
\]
an explicit magic kernel is
\[
 \varphi_0=\varphi_1=T,
 \qquad \varphi_g=\varphi_{g'}=0,
 \qquad F_{cc'}(x)=\varphi_{c-c'}(x).
\]
Consequently the entire one-parameter discrete-phase fluid-diamond family
has sphere-packing density $\sqrt2\,\pi^4/945$, equal to that of
$\Lambda_9$.
\end{theorem}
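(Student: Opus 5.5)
The argument has two halves: the kernel $F_{cc'}=\varphi_{c-c'}$ is primal feasible with value $2$, and each $P_s$ is admissible with base density $2$. Lemma~\ref{lem:config-dual} then shows the autocorrelation of $P_s$ is dual feasible with value $2$. Weak duality (or Corollary~\ref{cor:density-bound}) squeezes $P_X(C)=D_X(C)=2$, so both certificates are optimal.

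\emph{Primal side.} View the kernel as a function on $X=\R\times C$ with $C\cong(\Z/2)^2$ and normalized counting measure, namely $f(x,\delta)=\varphi_\delta(x)$.
\begin{itemize}
\item \emph{Sign conditions.} The forbidden region for difference $\delta$ is $|x|\ge d_\delta$. For $\delta=0$ we need $T\le0$ on $|x|\ge\sqrt2$, and for $\delta=1$ we need $T\le0$ on $|x|\ge1$; both hold since $T$ vanishes outside $[-1,1]$. For $\delta=g,g'$ the function is identically $0$, so $d_g=d_{g'}=0$ causes no trouble.
\item \emph{Positive definiteness.} The channels are $f_\chi(x)=\tfrac14\sum_\delta \varphi_\delta(x)\overline{\chi(\delta)}=\tfrac14 T(x)\bigl(1+\chi(1)\bigr)$. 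Every character of $(\Z/2)^2$ takes values $\pm1$, so each channel is either $\tfrac12T$ or $0$. Since $\widehat T(\xi)=(\sin\pi\xi/\pi\xi)^2\ge0$, every channel has nonnegative Fourier transform.
\item \emph{Objective.} We get $f(0)=T(0)=1$ and $\widehat f(0,\mathbf1)=\tfrac12\widehat T(0)=\tfrac12$. The certified bound is therefore $f(0)/\widehat f(0,\mathbf1)=2$.
\end{itemize}
The fiddly part is normalization: the factor $\tfrac14$ from counting measure and the fact that two of the four colors carry $T$ together produce exactly $1/2$. One should also note that the multi-color kernel $F_{cc'}$ and the group function $f$ on $X$ describe the same object.

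\emph{Dual side.} Check that $P_s$ is admissible by examining each pair of points. Points at base distance $0$ differ by $2n$ with $n\ne0$, hence by at least $2$. Color difference $1$ occurs between $0$ and $1$ and between $g$ and $g'$, with base distances $1+2n$, whose absolute value is at least $1$. Color differences $g$ and $g'$ have threshold $0$, so they impose nothing. Same-color points are at distance at least $2\ge\sqrt2$. The configuration is $2\Z$-periodic with $4$ orbits, so its base density is $4/2=2$. Lemma~\ref{lem:config-dual} then gives a dual feasible measure of value $2$, and complementary slackness holds automatically.

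\emph{Density statement.} With base density $\delta_B=2$ and $\covol(D_8)=\sqrt{\det D_8}=\sqrt4=2$, the center density in $\R^9$ is $2/2=1$ at $r=\sqrt2$. The sphere radius is $r/2=1/\sqrt2$, so
\[
\Delta=\vol B^9_{1/\sqrt2}=\frac{\pi^{9/2}}{\Gamma(11/2)}\,2^{-9/2}.
\]
Using $\Gamma(11/2)=945\sqrt\pi/32$, this simplifies to $\sqrt2\,\pi^4/945$. This matches $\Lambda_9$, whose center density is also $1/(16\sqrt2)$ in the usual normalization. The only real obstacle is confirming the thresholds encode the geometry, namely that $\min|t-\ell|^2$ over the coset $\delta$ equals $2-d_\delta^2$; this is taken from the problem statement.
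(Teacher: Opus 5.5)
Your proposal is correct and follows the paper's own argument: the same character-channel computation for the kernel, the same pairwise admissibility check for $P_s$ with four points per period $2$, and Lemma~\ref{lem:config-dual} for dual feasibility at value $2$. Two small differences, neither affecting correctness: you use normalized counting measure on $(\Z/2)^2$, so the trivial channel is $\tfrac12T$ and the ratio is $1/\tfrac12=2$, whereas the paper's unnormalized sum $2T$ leaves the factor $|C|=4$ implicit; and your explicit evaluation of $\vol B^9_{1/\sqrt2}=\sqrt2\,\pi^4/945$ supplies a step the paper does not write out (fix the slip ``points at base distance $0$'', which should read ``points of color difference $0$'').
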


\begin{proof}
The space-side conditions are immediate: $T$ vanishes for $|x|\ge1$,
while the two kernels whose sign is required on the whole line vanish
identically.  Writing a character as
$\chi(g)=\alpha$, $\chi(1)=\beta$ with $\alpha,\beta\in\{\pm1\}$, its
Fourier channel is
\[
 f_{\alpha,\beta}
 =\varphi_0+\alpha\varphi_g+\beta\varphi_1
   +\alpha\beta\varphi_{g'}=(1+\beta)T.
\]
Thus the two $\beta=+1$ channels equal $2T$ and have nonnegative Fourier
transform
\[
 2\widehat T(\xi)
 =2\left(\frac{\sin\pi\xi}{\pi\xi}\right)^2,
\]
whereas the two $\beta=-1$ channels vanish.  Since
$F_{cc}(0)=1$ and the trivial Fourier eigenvalue at zero is $2$, the
colored LP gives $\delta_B\le2$.

Every member of the fluid family is feasible for this four-color problem.
Consecutive differences alternate between $g$ and $g'$, whose threshold is
zero; layers one unit apart have color difference $1$, whose threshold is
one; and equal colors recur only after two units.  Thus $P_s$ is a packing
for every $s$, and its four points per period of length two give base
density $2$.

We can see sharpness directly on the dual side.  Let
\[
 \omega_s=\sum_{p\in P_s}\delta_p
\]
and let $\gamma_s$ be its volume-averaged autocorrelation on
$\R\times C$.  Its coefficient at the identity is the density $2$, so
\[
 \alpha_s:=\frac12\gamma_s=\delta_0+\mu_s
\]
with $\mu_s\geq0$ supported on the nonzero differences of $P_s$, hence on
the admissible-difference set of the \emph{four-color} problem---a more
stringent support condition than dual feasibility for the cylinder LP.
This is Lemma~\ref{lem:config-dual} applied on $X=\R\times C$: the
diffraction measure $\widehat\gamma_s$ is nonnegative, and
\[
 \widehat\gamma_s(\{(0,\mathbf1)\})=4,
\]
the square of the density, where $\mathbf1\in\widehat C$ is the trivial
character.
Therefore
\[
 \alpha_s-2m_{\R\times C}
 =\delta_0+\mu_s-2m_{\R\times C}\in K^*(\R\times C).
\]
Thus $\alpha_s$ is dual feasible for the four-color program for every
$s$.  The primal kernel and these dual autocorrelations have common value
$2$, proving optimality on both sides; by
Remark~\ref{rem:config-slackness}, each $\alpha_s$ is dual optimal.
\end{proof}

This certificate uses the triangle profile but is not the scalar
triangle-cylinder certificate: it is phase-dependent, with two nonzero
color-difference kernels (equivalently one nonzero character profile,
appearing in two characters) and two identically zero ones.  What remains
open is the extension from the four cosets $D_8^*/D_8$ to arbitrary phases
in $\R^8/D_8$, as well as independently rotated layers.

\subsection{Periodization from Solved Total Spaces}\label{sec:periodization}

The cylinder LP is also sharp whenever it is obtained by slicing a lattice whose ordinary Euclidean Cohn--Elkies problem is solved.

\begin{lemma}[Averaging over a lattice]\label{lem:periodization}
Let $F$ be a Schwartz function feasible for the Euclidean Cohn--Elkies LP on $\R^{k+m}$ at minimal distance $r\le\min(L)$, after choosing a fiber subspace $V\simeq\R^m$ and a lattice $L\subset V$. Then
\[
  f(x,\bar y)=\sum_{\ell\in L}F(x,y+\ell)
\]
is feasible for the cylinder LP, with
\[
  \widehat{f_w}(\xi)=\covol(L)^{-1}\widehat F(\xi,w).
\]
\end{lemma}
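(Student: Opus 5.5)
We verify each of the four primal conditions in Definition~\ref{def:c-constrained-lp} for the torus example, together with the stated Fourier formula. Throughout we identify $\R^{k+m}=\R^k\times V$ with $V\simeq\R^m$, so that $F$ is a Schwartz function of $(x,y)$. Feasibility of $F$ for the Euclidean Cohn--Elkies LP means three things: $\widehat F\ge0$ on $\R^{k+m}$, $F(z)\le 0$ whenever $|z|\ge r$, and (after normalization) $\widehat F(0)>0$.

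\emph{Step 1: convergence and continuity.} Because $F$ is Schwartz, the lattice sum $\sum_{\ell\in L}F(x,y+\ell)$ converges absolutely and locally uniformly. Hence $f$ is a continuous function on $X=\R^k\times\R^m/L$. It is also integrable: unfolding the sum gives $\int_X f\,dm_X=\covol(L)^{-1}\int_{\R^{k+m}}F$, where the factor $\covol(L)^{-1}$ comes from the normalized Haar measure on $\T$. Therefore $f\in L^1(X)$, and this integral equals $\covol(L)^{-1}\widehat F(0)$.

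\emph{Step 2: the Fourier channels.} For $w\in L^*$, we compute $f_w(x)=\int_\T f(x,\bar y)e^{-2\pi i\langle w,y\rangle}\,dm_\T(\bar y)$. Unfolding the sum over $L$ against a fundamental domain (normalized measure $dy/\covol(L)$) turns this into
\[
f_w(x)=\covol(L)^{-1}\int_{V}F(x,y)e^{-2\pi i\langle w,y\rangle}\,dy .
\]
The interchange of sum and integral is justified by Fubini, since $F$ is Schwartz. Taking the Fourier transform in $x$ then gives $\widehat{f_w}(\xi)=\covol(L)^{-1}\widehat F(\xi,w)$. Since $\widehat F\ge0$, every channel is nonnegative, so $f\in K(X)$ by Bochner's theorem as recalled at the start of Section~\ref{sec:formulations}. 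This step is just Poisson summation in the fiber variable.

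\emph{Step 3: the sign condition.} This is the only geometric point. Suppose $(x,\bar y)\in C$, so that $|x|^2+\min_{\ell\in L}|y-y'-\ell|^2\ge r^2$ with $y'=0$. Then every lift $(x,y+\ell)$ satisfies $|(x,y+\ell)|^2=|x|^2+|y+\ell|^2\ge r^2$. Here we use that $\min_{\ell}|y+\ell|$ is exactly the torus distance, and that the base and fiber coordinates are orthogonal. Consequently every term $F(x,y+\ell)$ is $\le0$, and so $f(x,\bar y)\le0$. The hypothesis $r\le\min(L)$ is what makes the cylinder problem meaningful and well-posed: the origin $0$ does not lie in $C$, while the lattice translates $(0,\ell)$ with $\ell\neq0$ are identified with $0$ in $X$. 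However, the hypothesis is not needed for the sign condition itself.

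\emph{Step 4: normalization.} We rescale $f$ by $\int_X f=\covol(L)^{-1}\widehat F(0)>0$ to satisfy the normalization $\int_X f\,dm_X=1$. None of these steps is a real obstacle. The only care needed is the measure normalization on $\T$, which produces the factor $\covol(L)^{-1}$, and the justification of Fubini, both of which are routine for Schwartz $F$. For later use we also record that $f(0)=\sum_{\ell\in L}F(0,\ell)$. This value is $\ge F(0)$ only if the terms with $\ell\ne0$ are nonnegative, so it is generally not equal to $F(0)$; the lemma itself, however, asserts only feasibility.
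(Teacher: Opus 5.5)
Your proposal is correct and follows the paper's proof essentially step for step. The shared steps are: absolute, locally uniform convergence of the lattice sum; Tonelli for integrability (apply the unfolding to $|F|$ first, to get $\int_X|f|\,dm_X\le\covol(L)^{-1}\|F\|_1$, before computing $\int_X f$); the observation that every lift $(x,y+\ell)$ of a point of $C$ has norm at least $r$; and unfolding the fiber integral to obtain $\widehat{f_w}(\xi)=\covol(L)^{-1}\widehat F(\xi,w)$.

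One correction to your closing aside: since $|\ell|\ge\min(L)\ge r$ for $\ell\ne0$, each $F(0,\ell)$ with $\ell\ne0$ is nonpositive, so in fact $f(0)\le F(0)$, with equality exactly when $F$ vanishes on $L\setminus\{0\}$. This is where the hypothesis $r\le\min(L)$ actually enters, and it is what Theorem~\ref{thm:sharp_fibrations} exploits.
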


\begin{proof}
Let $D\subset V$ be a bounded fundamental domain for $L$.  Given
$A\ge0$ and $B>m$, Schwartz decay gives a constant $C_{A,B}$ such that
\[
 |F(x,v)|\le C_{A,B}(1+|x|)^{-A}(1+|v|)^{-B}.
\]
The lattice sum of the second factor is bounded uniformly for $y\in D$.
Consequently the defining series for $f$ converges absolutely and locally
uniformly (as do the series obtained after differentiating), and
\[
 \sup_{\bar y\in V/L}|f(x,\bar y)|=O_A((1+|x|)^{-A}).
\]
In particular, $f$ is continuous and vanishes at infinity on the cylinder.
Moreover, Tonelli's theorem and the tiling $V=\bigsqcup_{\ell\in L}(D+\ell)$
give
\[
 \int_{\R^k\times D}|f(x,\bar y)|\,dx\,dy
 \le \int_{\R^{k+m}}|F(z)|\,dz<\infty,
\]
so $f$ is continuous and integrable.

If $d_X((x,\bar y),0)\ge r$, then for every representative $y$ and every
$\ell\in L$,
\[
 |(x,y+\ell)|\ge d_X((x,\bar y),0)\ge r.
\]
Every summand is therefore nonpositive, and absolute convergence permits
termwise summation to give $f(x,\bar y)\le0$.

It remains to check every Fourier channel.  For $w\in L^*$, periodicity of
$e^{-2\pi i\langle w,y\rangle}$ under translation by $L$, followed by
Tonelli--Fubini, yields
\begin{align*}
 f_w(x)
 &=\frac1{\covol(L)}\int_D\sum_{\ell\in L}F(x,y+\ell)
        e^{-2\pi i\langle w,y\rangle}\,dy\\
 &=\frac1{\covol(L)}\int_VF(x,v)
        e^{-2\pi i\langle w,v\rangle}\,dv.
\end{align*}
Taking the Fourier transform in $x$ gives, with the same convention as the
Euclidean Cohn--Elkies transform,
\[
 \widehat{f_w}(\xi)=\frac1{\covol(L)}\widehat F(\xi,w)\ge0
 \qquad(\xi\in\R^k,\ w\in L^*).
\]
Thus $f$ satisfies all space- and Fourier-side cylinder constraints.
\end{proof}

\begin{theorem}[Sharp Fibrations of Solved Lattices]\label{thm:sharp_fibrations}
Let $\Lambda\subset\R^{k+m}$ be a lattice of minimal distance $r$ with a
sharp Schwartz-class Cohn--Elkies magic function $F$. Let $L\subset\Lambda$ be a
rank-$m$ sublattice, set $V=\operatorname{span}_{\R}(L)$, and suppose that
the orthogonal projection of $\Lambda$ to $V^\perp$ is discrete. Then
$\Lambda$ is an aligned $L$-fibered packing,
the aligned $L$-fibered cylinder LP is sharp, a sharp cylinder magic
function is the fiber periodization of $F$, and the optimal base density is
\[
 \delta_B=\frac{\covol(L)}{\covol(\Lambda)}.
\]
\end{theorem}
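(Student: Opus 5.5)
The plan is to prove matching upper and lower bounds for the cylinder LP, with the upper bound coming from Lemma~\ref{lem:periodization} and the lower bound from Lemma~\ref{lem:config-dual}.

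\textbf{Setup.} Since $L\subset\Lambda$, we have $\min(L)\ge\min(\Lambda)=r$. Write $\R^{k+m}=V^\perp\times V$ with $V^\perp\simeq\R^k$, and let $\pi$ be the orthogonal projection onto $V^\perp$. The group $\Lambda\cap V$ contains $L$ with finite index. The first thing I would settle is whether it equals $L$. If it is strictly larger, then each $\Lambda$-orbit decomposes into several $L$-orbits over the same base point. That is harmless, because the introduction explicitly allows several indices with the same base coordinate.

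\textbf{$\Lambda$ is an aligned packing.} Because $\pi(\Lambda)$ is discrete, it is a lattice in $V^\perp$. Its rank is $k$, since $\Lambda$ has rank $k+m$ and $\Lambda\cap V$ has rank $m$. So $\Lambda$ is a union of $L$-orbits $(b_i,t_i)+(\{0\}\times L)$ indexed by the locally finite set $\Lambda/L$. For distinct $\lambda,\lambda'$ in different $L$-orbits, the cylinder distance is $\min_{\ell}|\lambda-\lambda'-\ell|\ge r$, since each $\lambda-\lambda'-\ell$ is a nonzero vector of $\Lambda$. Hence $\Lambda$ is an admissible configuration for the torus-example set $C$.

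It is periodic under the rank-$k$ lattice $\Gamma=\pi(\Lambda)$. To get an honest periodicity lattice of the form $\Lambda'\times\{0\}$, I would pass to the finite-index sublattice of $\Lambda$ that acts trivially on the phase in $V/L$. The phase map $\Lambda\to V/L$ restricted to the preimage is a homomorphism, so the relevant kernel has finite index. The base density then counts $L$-orbits per base volume:
\[
\delta_B=\frac{\#(\Lambda/L)\text{ per fundamental domain of }\Gamma}{\covol(\Gamma)}=\frac{[\Lambda\cap V:L]}{\covol(\Gamma)}=\frac{\covol(L)}{\covol(\Lambda)},
\]
using $\covol(\Lambda)=\covol(\Gamma)\covol(\Lambda\cap V)$. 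Lemma~\ref{lem:config-dual} then gives $D_X(C)\ge\covol(L)/\covol(\Lambda)$.

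\textbf{Upper bound via periodization.} Apply Lemma~\ref{lem:periodization} to $F$ to get $f$ feasible for the cylinder LP with $\widehat{f_0}(0)=\widehat F(0)/\covol(L)$. Sharpness of $F$ means $F(0)/\widehat F(0)$ is the center density $1/\covol(\Lambda)$ (with the normalization making $\Lambda$ extremal). I also need $f(0,\bar0)=F(0)$, i.e.\ $F(\ell)=0$ for $\ell\in L\setminus\{0\}$. This holds because $F$ vanishes on $\Lambda\setminus\{0\}$; in fact it follows from equality in the Cohn--Elkies Poisson-summation argument. The cylinder bound is then
\[
\frac{f(0)}{\widehat{f_0}(0)}=\frac{\covol(L)\,F(0)}{\widehat F(0)}=\frac{\covol(L)}{\covol(\Lambda)}.
\]
Combined with weak or strong duality, this gives equality, and so sharpness, with $f$ as magic function.

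\textbf{Main obstacle.} The delicate step is the density bookkeeping: the periodicity lattice, the index $[\Lambda\cap V:L]$, and the identity $\covol(\Lambda)=\covol(\pi\Lambda)\covol(\Lambda\cap V)$, which I would take from the standard projection formula. A secondary point is justifying $F|_{\Lambda\setminus0}=0$. I would take this as part of the definition of a sharp magic function.
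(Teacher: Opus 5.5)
Your outline follows the paper's proof step for step:
\begin{itemize}
\item the splitting $V^\perp\oplus V$;
\item the index $q=[\Lambda\cap V:L]$ together with $\covol(\Lambda)=\covol(\pi\Lambda)\covol(\Lambda\cap V)$ for the density;
\item separation, because every lift of a difference is a nonzero vector of $\Lambda$;
\item Lemma~\ref{lem:periodization} for feasibility;
\item the Poisson-summation equality forcing $F=0$ on $\Lambda\setminus\{0\}$.
\end{itemize}
The paper proves that last point rather than building it into the definition. In fact you do not need it: $f(0,\bar0)=\sum_{\ell\in L}F(\ell)\le F(0)$ already gives the upper bound, and the lower bound then forces equality.

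The one inference that is not valid as written is your passage to an honest period lattice. You are right that $\pi(\lambda)$ acts on the cylinder as the screw motion $(\pi\lambda,\overline{\lambda_V})$, not as a pure base translation; the paper's phrase ``$B$-periodic'' glosses over this. But a homomorphism into the torus $V/L$ need not have finite-index kernel. Here the kernel is $L\oplus(\Lambda\cap V^\perp)$, which has finite index only when $\Lambda\cap V^\perp$ has rank $k$. This fails, for instance, for $\Lambda=\Z(1,0)+\Z(\alpha,1)$ and $L=\Z(1,0)$ with $\alpha\notin\mathbb{Q}$, where $\Lambda\cap V^\perp=0$.

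The claim does hold whenever $\Lambda$ has a rational Gram matrix. Then $\lambda_V\in\mathbb{Q}L$, so the phase image is a finitely generated torsion group and hence finite. This covers $E_8$ and the Leech lattice, the nontrivial cases in which a sharp magic function is known.

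More simply, you can drop periodicity altogether. Corollary~\ref{cor:density-bound} applies to arbitrary admissible configurations, and the upper base density of the $\Lambda$-configuration is $q/\covol(\pi\Lambda)=\covol(L)/\covol(\Lambda)$. So $P_X(C)$ is squeezed to this value by $f$ from above and by the configuration from below. An optimal dual measure is then supplied by Theorem~\ref{thm:dual-attainability}, without invoking Lemma~\ref{lem:config-dual}.
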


\begin{proof}
Use the orthogonal splitting $\R^{k+m}=V^\perp\oplus V$ and let
$\pi:\R^{k+m}\to V^\perp$ be orthogonal projection.  The group
$B=\pi(\Lambda)$ is discrete by hypothesis and spans $V^\perp$, so it is a
full-rank lattice there.  Set $K=\Lambda\cap V$, the kernel of
$\pi|_\Lambda$.  The inclusion $L\subset K$ has finite index
$q=[K:L]$, and the image of $\Lambda$ in
$V^\perp\times(V/L)$ has exactly $q$ points above each point of $B$.

The lattice $K$ is primitive in $\Lambda$ because it is the kernel of the
homomorphism $\pi|_\Lambda$.  Applying the block-triangular determinant
factorization to $K\subset\Lambda$ gives
\[
 \covol(\Lambda)=\covol(K)\covol(B).
\]
Since $\covol(L)=q\covol(K)$, the base density of the induced cylinder
configuration is
\[
 \delta_B(\Lambda)=\frac{q}{\covol(B)}
 =\frac{\covol(L)}{\covol(\Lambda)}.
\]
For two distinct points of this cylinder configuration, every lift of their
difference is a vector of $\Lambda\setminus\{0\}$ and therefore has norm at
least $r$.  Hence the configuration is $r$-separated.  Also
$\min(L)\ge r$, since $L\subset\Lambda$.

The preceding observation allows us to apply Lemma~\ref{lem:periodization}
to
\[
 f(x,\bar y)=\sum_{\ell\in L}F(x,y+\ell).
\]
Thus the sum is absolutely convergent, defines a continuous, integrable
function on the cylinder, is nonpositive whenever the cylinder distance is
at least $r$, and satisfies
\[
 \widehat{f_w}(\xi)=\covol(L)^{-1}\widehat F(\xi,w)\ge0
 \quad\text{for every }(\xi,w)\in V^\perp\times L^*.
\]

We finally compute its objective value.  Sharpness of the Euclidean
certificate says
\[
  \frac{F(0)}{\widehat F(0)}=\frac1{\covol(\Lambda)}.
\]
It also forces the space-side lattice zeros.  Indeed, Poisson summation for
$\Lambda$ gives
\[
 \sum_{\lambda\in\Lambda}F(\lambda)
 =\frac1{\covol(\Lambda)}
   \sum_{\lambda^*\in\Lambda^*}\widehat F(\lambda^*).
\]
The left side is at most $F(0)$ and the right side is at least
$\widehat F(0)/\covol(\Lambda)$; sharpness makes these endpoints equal.
Since all nonzero terms on the left are nonpositive, absolute convergence
then implies $F(\lambda)=0$ for every
$\lambda\in\Lambda\setminus\{0\}$.  In particular,
\[
 f(0,0)=\sum_{\ell\in L}F(\ell)=F(0).
\]
The zero-channel identity gives
$\widehat{f_0}(0)=\covol(L)^{-1}\widehat F(0)$. The cylinder bound supplied
by $f$ is consequently
\[
 \frac{f(0,0)}{\widehat{f_0}(0)}
 =\covol(L)\frac{F(0)}{\widehat F(0)}
 =\frac{\covol(L)}{\covol(\Lambda)}=\delta_B.
\]
Since $f$ is feasible with objective
$\covol(L)/\covol(\Lambda)$, Definition~\ref{def:c-constrained-lp} and
Corollary~\ref{cor:density-bound} bound the base density of every
aligned $L$-fibered packing by this value.  The $r$-separated cylinder configuration
induced by $\Lambda$ is $B$-periodic with exactly this base density, so it
attains the upper bound, and by Lemma~\ref{lem:config-dual} its
autocorrelation is a dual certificate of the same value.  The asserted
sharpness follows.
\end{proof}

\subsection{A Torus Certificate for the Barnes--Wall Lattice}
\label{sec:bw16}

Use the normalization in which $BW_{16}$ has minimum norm $4$, so the
packing distance is $r=2$.  A standard orthogonal frame exhibits
\[
  \sqrt{2}E_8\oplus\sqrt{2}E_8\subset BW_{16}
\]
with index $16$.  The corresponding quotient is
$\Gamma\cong(\Z/2)^4$.

Recall that the optimal sphere packing bound in $\R^8$ is achieved by an
explicit Schwartz function $F_{E_8}$~\cite{Via17}, normalized by
\[
 F_{E_8}(0)=\widehat F_{E_8}(0)=1,
 \qquad
 F_{E_8}(x)\leq0\quad (|x|\geq\sqrt2),
 \qquad \widehat F_{E_8}(\xi)\geq0.
\]

\begin{theorem}[Barnes--Wall torus certificate]\label{thm:bw16-torus}
Let $L=\sqrt{2}E_8$, let $T=\R^8/L$, and consider the cylinder
$X=\R^8\times T$ with exclusion distance $2$.  Then
\[
        f(x,\bar y)=F_{E_8}(x)
\]
is a sharp cylinder magic function, and the optimal base center density is
$1$ (equivalently, the ambient center density is
$1/\covol(\sqrt2E_8)=1/16$).
The bound is attained by the Barnes--Wall fibration above.

The same function certifies the finite colored problem obtained by
restricting the phases to
\[
 L^*/L\cong E_8/2E_8,
\]
which has $256$ colors.  This problem is more general than the
$16$-colored Barnes--Wall problem, whose phase set is the glue subgroup
$\Gamma\cong(\Z/2)^4\subset E_8/2E_8$; hence the latter is sharp as well.
\end{theorem}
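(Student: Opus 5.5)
The function $f(x,\bar y)=F_{E_8}(x)$ does not depend on the fiber phase, so the argument follows the triangle certificate of Theorem~\ref{thm:triangle}. The one new ingredient is that the base separation comes from the covering radius of $L$. I would check the three primal conditions in turn, compute the objective, and then exhibit the Barnes--Wall configuration as a matching dual certificate through Lemma~\ref{lem:config-dual}.

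\emph{Fourier side.} Since $f$ is constant in $\bar y$, only the trivial channel $w=0$ is nonzero. There $f_0=F_{E_8}$, so $\widehat{f_0}=\widehat F_{E_8}\ge0$, and every channel with $w\neq0$ vanishes identically. Continuity and integrability on $X$ are immediate because $T$ has finite Haar measure.

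\emph{Space side.} We need $F_{E_8}(x)\le0$ whenever $d_X((x,\bar y),0)\ge2$. The covering radius of $E_8$ (minimum norm $2$) is $1$, so $R(\sqrt2E_8)=\sqrt2$ and every phase lies within distance $\sqrt2$ of $L$. Hence $d_X\ge2$ gives $|x|^2\ge 4-2=2$, that is $|x|\ge\sqrt2$, which is exactly the region where $F_{E_8}\le0$. Also $\min(L)=2\ge r$.

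\emph{Objective.} With the normalization $f(0,0)/\widehat{f_0}(0)=F_{E_8}(0)/\widehat F_{E_8}(0)=1$, Corollary~\ref{cor:density-bound} bounds the base density by $1$. The ambient center density is then at most $1/\covol(\sqrt2E_8)=1/2^4=1/16$.

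\emph{Attainment.} Apply Theorem~\ref{thm:sharp_fibrations}'s counting to $\Lambda=BW_{16}$ and the sublattice $L'=\sqrt2E_8\oplus0$ (the first summand):
\begin{itemize}
\item The projection of $BW_{16}$ to the second $\R^8$ is discrete, since it contains $\sqrt2E_8$ with finite index.
\item Hence $\delta_B=\covol(L')/\covol(BW_{16})$.
\item With minimum norm $4$, $\covol(BW_{16})=16^2/16=16$, because $\sqrt2E_8\oplus\sqrt2E_8$ has covolume $256$ and index $16$.
\item Therefore $\delta_B=16/16=1$.
\end{itemize}
The configuration is $2$-separated, so Lemma~\ref{lem:config-dual} gives a dual certificate of value $1$. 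This proves sharpness.

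\emph{Colored problems.} For the $256$-color problem on $\R^8\times(L^*/L)$, the same $f$ restricted to phases in $L^*/L$ remains feasible: the sign condition only used $\min_\ell|t-\ell|\le\sqrt2$, which holds for every phase. Its nontrivial character channels still vanish. Alternatively, invoke Proposition~\ref{prop:lca-functoriality} for the inclusion $L^*/L\hookrightarrow T$. The $16$-color glue problem on $\Gamma\subset L^*/L$ is bounded the same way via the inclusion. The Barnes--Wall configuration uses only glue phases, so it is admissible for both colored problems and again attains $1$.

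I expect the main obstacle to be confirming the identification of $L^*/L$ with $E_8/2E_8$ and that the glue lies inside it. This reduces to $(\sqrt2E_8)^*=E_8/\sqrt2$ and $E_8/\sqrt2\big/\sqrt2E_8\cong E_8/2E_8$. The same point requires checking that $BW_{16}$ projects into $L^*$, which follows from integrality of the inner products in $BW_{16}$ (norms divisible by $4$ give inner products in $\Z$).
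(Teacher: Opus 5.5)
Your proposal is correct and follows the paper's proof essentially step for step: the sign condition via $|x|^2\ge 4-R(L)^2=2$, the Fourier side via phase-independence, objective $F_{E_8}(0)/\widehat F_{E_8}(0)=1$, attainment by the $BW_{16}$ fibration through Lemma~\ref{lem:config-dual}, and restriction of the same certificate to $L^*/L$ and then to $\Gamma$. Your covolume count $\covol(\sqrt2E_8)/\covol(BW_{16})=16/16$ only makes explicit the paper's remark that the base has center density $1$. One small correction to your side remark: at minimum norm $4$, $BW_{16}$ does not have all norms divisible by $4$ (it has vectors of norm $6$), but it is even, and evenness alone gives the integrality you need to place the first-factor projections in $L^*$.
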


\begin{proof}
The covering radius of $L=\sqrt{2}E_8$ is $\sqrt{2}$.  Consequently, if
$d_X((x,\bar y),0)\geq2$, then
\[
 |x|^2\geq4-d_T(\bar y,0)^2\geq4-R(L)^2=2.
\]
The space-side sign condition therefore follows from
$F_{E_8}(x)\leq0$ for $|x|\geq\sqrt{2}$.  Because $f$ is constant in the
torus coordinate, all nontrivial fiber Fourier channels vanish, while
the trivial channel is $F_{E_8}$ and has nonnegative Fourier transform.
Thus $f$ is cylinder-feasible, and its objective value is
\[
 \frac{f(0,0)}{\widehat{f_0}(0)}
 =\frac{F_{E_8}(0)}{\widehat F_{E_8}(0)}=1.
\]

Definition~\ref{def:c-constrained-lp} and
Corollary~\ref{cor:density-bound} therefore bound the base density of every
configuration by $1$.
Projecting $BW_{16}$ to the second factor gives an $E_8$ base of center
density $1$, with the phase in the first factor determined by the glue
label.  Hence the Barnes--Wall configuration attains the bound, and by
Lemma~\ref{lem:config-dual} its autocorrelation certifies the same value
on the dual side.
Restricting the torus phase to $L^*/L$ gives the $256$-colored problem,
and restricting further to $\Gamma$ gives the $16$-colored Barnes--Wall
problem.  The same phase-independent certificate remains feasible under
both restrictions, and the Barnes--Wall configuration still attains its
value.
\end{proof}

\section{Conjectures and Relations to the Planar Sphere Packing LP}
\label{sec:collapse}

The ordinary Cohn--Elkies linear program in dimension~$2$, normalized to
minimal distance~$1$, has the hexagonal-lattice center density
$2/\sqrt3$ as its natural target.  It is conjectured that the linear
program is sharp at this value, although no sharp planar magic function is
known.  We present two fibered problems that are controlled by precisely
this unresolved scalar question.

\begin{conjecture}[Sharpness of the planar LP]\label{conj:planar-lp}
The two-dimensional Cohn--Elkies linear program at minimal distance~$1$
has value $2/\sqrt3$, attained by a magic function for the hexagonal
lattice.
\end{conjecture}

\subsection{The $E_6$ over $D_4$ collapse theorem}
\label{sec:e6d4_collapse}

Consider the $(\Z/2)^2$-colored packing problem on $\R^2$ corresponding to
the $E_6$ over $D_4$ fibration.  The same-color minimal distance is
$\sqrt2$, all three cross-color distances are $1$, and the conjectured
optimal base is the hexagonal lattice with center density $2/\sqrt3$.
We will show that the linear program for this colored packing problem is sharp
if and only if the two-dimensional Cohn--Elkies linear program is sharp for
the hexagonal lattice.

Throughout this subsection, $B$ is the hexagonal lattice of minimal
distance $1$, identified with the $A_2$ root lattice rescaled to this
normalization.  Let $\alpha_1,\alpha_2$ be its unit simple roots, so that
\[
 |\alpha_1|=|\alpha_2|=1,\qquad
 \langle\alpha_1,\alpha_2\rangle=-\frac12.
\]
For integers $m,n$, set
\[
 Q(m,n):=m^2-mn+n^2.
\]
Then
\[
 |m\alpha_1+n\alpha_2|^2=Q(m,n).
\]
The minimal vectors are $\pm\alpha_1$, $\pm\alpha_2$, and
$\pm(\alpha_1+\alpha_2)$.
A positive integer is called \emph{L\"oschian} if it is represented by
$Q$, and the nonzero distance set of $B$ is
$\{\sqrt{Q(m,n)}:(m,n)\in\Z^2\setminus\{(0,0)\}\}$, equivalently
$\{\sqrt N:N\ \hbox{is L\"oschian}\}$.  The color group is
$C=(\Z/2)^2$.
We identify $\widehat C$ with $C$ via
\[
 v\longmapsto\chi_v,\qquad \chi_v(y)=(-1)^{v\cdot y},
\]
and write $\chi=v$ when using this identification.

A \emph{colored magic kernel} is a primal function
$f:\R^2\times C\to\R$ that certifies the value $2/\sqrt3$.  Its character
channels are
\[
 f_\chi(x)=\frac1{|C|}\sum_{y\in C}
 f(x,y)\overline{\chi(y)}.
\]
Thus feasibility means $\widehat{f_\chi}\geq0$ on $\R^2$ and
$f(x,y)\leq0$ for $|x|\geq d_y$, and the certified value is
$f(0,0)/\widehat{f_{\mathbf1}}(0)$
(Definition~\ref{def:c-constrained-lp}).

Since $1<\sqrt2\leq\sqrt3$, a coloring $c:B\to C$ is \emph{valid} for
this program if and only if adjacent points of the triangular graph (the
nearest-neighbor graph of $B$) receive different colors, and every valid
coloring is then an optimal configuration, of density $2/\sqrt3$.  The
proof of the collapse theorem rests on the flexibility of such
colorings.

\begin{lemma}[Flexibility of the $(\Z/2)^2$ colorings]\label{lem:flexcolor}
The following colorings of $B$ are valid, for arbitrary
$\varphi,\psi:\Z\to\Z/2$ and any injection
$\iota:\Z/3\hookrightarrow(\Z/2)^2$ with unused color $D$:
\begin{enumerate}
\item[(i)] $c(m\alpha_1+n\alpha_2)
=\big(m+\varphi(n),\,n\big)\bmod2$, and symmetrically
$\big(m,\,n+\psi(m)\big)\bmod2$;
\item[(ii)] the embedded $3$-colorings
$c=\iota\big((m+n)\bmod3\big)$;
\item[(iii)] the modification of (ii) that recolors every point of a set
$S\subset B$ containing no adjacent pair to the color $D$.
\end{enumerate}
\end{lemma}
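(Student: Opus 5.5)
Validity means that the two points of every nearest-neighbor pair of $B$ receive different colors. The nearest-neighbor differences are $\pm\alpha_1$, $\pm\alpha_2$, $\pm(\alpha_1+\alpha_2)$, so the plan is to check, for each family, that the color difference $c(p+\delta)-c(p)\in(\Z/2)^2$ is nonzero for $\delta\in\{\alpha_1,\alpha_2,\alpha_1+\alpha_2\}$ and every $p=m\alpha_1+n\alpha_2$. Since the color group is an elementary abelian $2$-group, the signs of $\delta$ play no role. Validity then implies optimality by the remark preceding the lemma: the same-color threshold $\sqrt2$ is at most $\sqrt3$, which is the second-smallest distance in $B$, and all cross-color thresholds equal $1$.

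For (i), take $c=(m+\varphi(n),n)\bmod 2$.
\begin{itemize}
\item The step $\alpha_1$ changes $m\mapsto m+1$ and leaves $n$ fixed, so the difference is $(1,0)$.
\item The step $\alpha_2$ changes $n$ by one, so the second coordinate of the difference is $1$.
\item The step $\alpha_1+\alpha_2$ also changes $n$ by one, so again the second coordinate is $1$.
\end{itemize}
In every case the difference is nonzero, whatever $\varphi$ is. The symmetric family $(m,n+\psi(m))\bmod 2$ follows by exchanging the roles of $\alpha_1$ and $\alpha_2$. This is a symmetry of $B$ that permutes the nearest-neighbor differences.

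For (ii), the three steps change $(m+n)\bmod 3$ by $1$, $1$ and $2$ respectively. Each of these is nonzero mod $3$, so injectivity of $\iota$ gives distinct colors.

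For (iii), consider an edge $\{p,q\}$ and split into cases according to how many endpoints lie in $S$.
\begin{itemize}
\item Both endpoints in $S$ is impossible, because $S$ contains no adjacent pair.
\item If neither endpoint is in $S$, the colors are those of (ii), which differ.
\item If exactly one endpoint is in $S$, that endpoint has color $D$ and the other has a color in $\iota(\Z/3)$, which excludes $D$.
\end{itemize}

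The only point needing care is confirming the list of nearest-neighbor differences, and it is given in the setup. I expect no real obstacle.
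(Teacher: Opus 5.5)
Your proof is correct and follows the paper's argument. You check the three minimal steps $(1,0)$, $(0,1)$, $(1,1)$ for (i) and (ii), and for (iii) you count how many endpoints of an edge lie in $S$, exactly as the paper does. The paper's (iii) also checks directly that same-colored pairs lie at distance $\geq\sqrt3>\sqrt2$: $D$--$D$ pairs within $S$, and $\iota$--$\iota$ pairs in a common coset of $\ker(m+n\bmod 3)$. That check is redundant given the characterization ``valid $\iff$ proper on the nearest-neighbor graph'' that you invoke, which the paper states just before the lemma.
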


\begin{proof}
(i) The six minimal differences change $(m,n)$ by $(\pm1,0)$,
$(0,\pm1)$, $\pm(1,1)$: the first flips the first displayed coordinate
($\varphi(n)$ is unchanged), the other four flip the second.  (ii) The
linear form $m+n$ changes by $1$, $1$, $2\bmod3$ along the minimal
differences, never by $0$.  (iii) A distance-$1$ pair contains at most
one point of $S$; if none, properness is (ii); if one, its colors are
$D$ and a value of $\iota$, which differ.  Same-colored pairs are
$D$--$D$ pairs (within $S$, hence at distance $\geq\sqrt3$) or
$\iota$--$\iota$ pairs (same class modulo the index-$3$ sublattice
$\ker(m+n\bmod3)$, hence at distance
$\in\sqrt{3\cdot\hbox{L\"oschian}}\geq\sqrt3$), and $\sqrt3>\sqrt2$.
\end{proof}

\begin{lemma}[Rotationally averaged complementary slackness]
\label{lem:e6d4-averaged-slackness}
Let $c:B\to C$ be any valid coloring and put
\[
 \omega_c=\sum_{p\in B}\delta_{(p,c(p))}.
\]
Every subsequential volume-averaged autocorrelation
\[
 \gamma_c=\lim_{j\to\infty}\frac1{R_j^2}
 \big(\omega_c|_{Q_{R_j}\times C}\big)
 *\widetilde{\big(\omega_c|_{Q_{R_j}\times C}\big)}
\]
where $\widetilde\mu(E)=\mu(-E)$, gives, after averaging over orthogonal
transformations,
\[
 \overline\gamma_c
 =\frac{\sqrt3}{2}\int_{O(2)}(R\times\mathrm{id}_C)_*\gamma_c\,dR,
\]
a dual-feasible measure of value $2/\sqrt3$, where $dR$ is normalized
Haar measure.  If $f$ is a magic kernel, then $\overline\gamma_c$ is dual
optimal and $f$ satisfies
\[
 f=0\quad\hbox{on }\operatorname{supp}\overline\gamma_c\setminus\{0\},
 \qquad
 \widehat f=0\quad\hbox{on }
 \operatorname{supp}\widehat{\overline\gamma_c}
 \setminus\{(0,\mathbf1)\}.
\]

In particular, if $c$ is $M$-periodic, then for every $R\in O(2)$,
\begin{align}
 f\big(R(p-q),c(p)-c(q)\big)&=0
 &&(p\ne q\in B), \label{eq:e6d4-space-slack}\\
 \widehat{f_\chi}(R\xi)&=0
 &&\big(\xi\in M^*,\ \chi\in\widehat C,\
       (\xi,\chi)\ne(0,\mathbf1),\ \sigma_\chi(\xi)\ne0\big),
 \label{eq:e6d4-fourier-slack}
\end{align}
where
\[
 \sigma_\chi(\xi)
 =\sum_{p\in B/M}\chi(c(p))\,\e{\langle\xi,p\rangle}.
\]
\end{lemma}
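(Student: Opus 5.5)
The plan is to check dual feasibility of $\overline\gamma_c$ directly, apply the slackness machinery of Remark~\ref{rem:config-slackness} and Proposition~\ref{prop:app-mollify}, and then read off the periodic case from the atomic structure of the autocorrelation.

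\emph{Dual feasibility of $\gamma_c$.} Every valid coloring has one point per fundamental domain of $B$, so the local point counts are uniformly bounded. Hence the truncated autocorrelations $R^{-2}(\omega_c|_{Q_R\times C})*\widetilde{(\omega_c|_{Q_R\times C})}$ are uniformly translation bounded, and vague subsequential limits $\gamma_c$ exist. Each truncated autocorrelation is positive definite, since its Fourier transform is $R^{-2}|\widehat{\omega_c|_{Q_R}}|^2\ge0$, and vague limits preserve positive definiteness. The atom of $\gamma_c$ at $0$ is $\operatorname{dens}(B)=2/\sqrt3$. All other atoms sit at differences $(p-q,c(p)-c(q))$ with $p\ne q$, which lie in $C$ because $c$ is valid.

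For the Fourier side, I want $\gamma_c-(2/\sqrt3)^2m_X$ to be positive definite; this is the Bragg peak at $(0,\mathbf1)$ of weight $\delta^2$. I would obtain it by testing against $\psi*\widetilde\psi$ with $\psi$ a Bruhat--Schwartz function and using
\[
 R^{-2}\Big|\sum_{p\in Q_R}\widehat\psi\text{-weighted sums}\Big|^2\ \ge\ R^{-2}\big|\text{the }(0,\mathbf1)\text{ mean term}\big|^2 \longrightarrow \delta^2\,\widehat\psi(0,\mathbf1)^2 .
\]
This is the standard Hof argument, and I expect it to be the main technical obstacle: one must show the mean term survives the limit, using the exact density of $B$.

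\emph{Averaging.} Rotating by $R\in O(2)$ preserves the constraint set, because constraints depend only on $|x|$ and the color difference. It also preserves positive definiteness and the $(0,\mathbf1)$ atom. Averaging over compact $O(2)$ with Haar measure keeps a tempered, positive definite measure supported in $\{0\}\cup C$. Scaling by $\sqrt3/2$ normalizes the atom at $0$ to $1$ and gives objective value $(\sqrt3/2)(2/\sqrt3)^2=2/\sqrt3$.

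\emph{Slackness.} If $f$ is a magic kernel, weak duality forces $P_X(C)=2/\sqrt3=D_X(C)$, so $f$ and $\overline\gamma_c$ form an optimal pair. Proposition~\ref{prop:lca-complementary-slackness}, or directly the mollification Proposition~\ref{prop:app-mollify} with $T=\overline\gamma_c$, then gives $\int_C f\,d\overline\gamma_c=0$ and $\int\widehat f\,d\nu=0$. The primal kernel here is the given $f$, so attainment is not needed.

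To get the pointwise statements I argue differently on the two sides. On the space side, $\overline\gamma_c|_C$ is a union of circles carrying rotation-invariant measure, and $f\le0$ there, so $f$ vanishes $\overline\gamma_c$-a.e. Continuity of $f$ then upgrades this to vanishing on the whole support. On the Fourier side I use the same continuity argument, now for $\widehat f\ge0$. Continuity of $\widehat{f_\chi}$ holds since $f$ is integrable.

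\emph{Periodic case.} If $c$ is $M$-periodic, the limit is unique and Lemma~\ref{lem:config-dual} applies. The atoms of $\gamma_c$ are exactly the points $(p-q,c(p)-c(q))$, and
\[
 \widehat{\gamma_c}=\covol(M)^{-2}\sum_{(\xi,\chi)}|\sigma_\chi(\xi)|^2\,\delta_{(\xi,\chi)} .
\]
After averaging, the support consists of the full $O(2)$-orbits of these atoms. Applying the vanishing statements on these orbits gives \eqref{eq:e6d4-space-slack} and \eqref{eq:e6d4-fourier-slack}.
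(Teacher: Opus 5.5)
Your proposal is correct and has the same outline as the paper: translation boundedness and positive definiteness of $\gamma_c$, $O(2)$-invariance of the program, complementary slackness via Proposition~\ref{prop:app-mollify}, and Lemma~\ref{lem:config-dual} for the periodic case. You diverge from the paper at two steps.

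\textbf{The trivial Fourier atom.} You flag this as the main obstacle and plan to handle it with Hof's argument. The paper settles it in one line by forgetting colors. The projection $\pi:\R^2\times C\to\R^2$ is a proper homomorphism, so $\pi_*$ commutes with convolution, reflection and vague limits. Moreover, $\pi_*(\omega_c|_{Q_R\times C})$ is just the Dirac comb of $B\cap Q_R$. Hence, for every valid coloring and every subsequence, $\pi_*\gamma_c=\frac{2}{\sqrt3}\sum_{b\in B}\delta_b$. By Poisson summation its Fourier transform is $(2/\sqrt3)^2\sum_{\xi\in B^*}\delta_\xi$. Testing $\widehat{\gamma_c}$ against functions constant in the color variable shows that its $\chi=\mathbf 1$ slice is exactly $\widehat{\pi_*\gamma_c}$. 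So the atom at $(0,\mathbf 1)$ is exactly $(2/\sqrt3)^2$, and no separate limiting argument is needed.

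If you keep the Hof route, cite the theorem itself ($\widehat\gamma(\{0\})=\mathrm{dens}^2$ along the averaging sequence) rather than your displayed inequality. As written, that inequality is not valid: the squared modulus of a sum is not bounded below by the squared modulus of one of its terms. Even if it were, a lower bound on a smoothed integral of $\widehat{\gamma_R}$ would not by itself produce an atom in the limit without an interchange of limits.

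\textbf{The pointwise support conditions.} Here your version is better than the paper's. The paper refers to the mollification argument, whose pointwise conclusion is stated only for atomic measures. But $\overline\gamma_c$ and $\widehat{\overline\gamma_c}$ are spread over circles. Your observation is what is actually needed to go from the almost-everywhere conclusions to \eqref{eq:e6d4-space-slack} and \eqref{eq:e6d4-fourier-slack}. Namely, a continuous function that vanishes almost everywhere for a measure vanishes on its support. This applies to $f$, and to each $\widehat{f_\chi}$, which is continuous because $f\in L^1$.
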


\begin{proof}
Subsequential autocorrelations exist because $\omega_c$ is translation
bounded.  They are positive-definite measures supported on the identity
and the admissible difference set.  Projecting to the trivial character
forgets the colors and gives the autocorrelation of $B$, so the atom at
the identity has mass $2/\sqrt3$ and the trivial Fourier atom has mass
$(2/\sqrt3)^2$.  Thus $(\sqrt3/2)\gamma_c$ is dual feasible with
objective value $2/\sqrt3$.

The colored program is invariant under $O(2)$, so averaging preserves dual
feasibility and the objective value.  If a magic kernel is given, then it
has the same value, and hence the averaged measure is dual optimal.  The
displayed support conditions follow from complementary slackness, by the
same mollification argument as in Remark~\ref{rem:config-slackness}.

For periodic $c$, every difference of two points gives an atom of
$\gamma_c$, while Lemma~\ref{lem:config-dual} gives a Fourier atom at
$(\xi,\chi)$ precisely when $\sigma_\chi(\xi)\ne0$.  Averaging carries
each such atom through its full $O(2)$-orbit, which proves
\eqref{eq:e6d4-space-slack} and \eqref{eq:e6d4-fourier-slack}.
\end{proof}

\begin{theorem}[Collapse of the $E_6$ over $D_4$ Two-Point Program]
\label{thm:e6d4collapse}
For the $(\Z/2)^2$-colored program:
\begin{enumerate}
  \item Every magic kernel has $f_\chi\equiv0$ for all nontrivial
  $\chi\ne\mathbf1$.  Equivalently,
  $f(x,y)=h(x):=f_{\mathbf1}(x)$ for all $y\in C$.

  \item A magic kernel exists if and only if the planar Cohn--Elkies LP at
  minimal distance~$1$ admits a magic function sharp for the hexagonal
  lattice, as in Conjecture~\ref{conj:planar-lp}.
\end{enumerate}
\end{theorem}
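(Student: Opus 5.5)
The plan is to prove part~1 on the Fourier side, using the rotationally averaged complementary slackness of Lemma~\ref{lem:e6d4-averaged-slackness} together with the large supply of valid colorings in Lemma~\ref{lem:flexcolor}. Part~2 will then be a formal consequence of part~1.

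Fix a magic kernel $f$ and a nontrivial character $\chi\ne\mathbf1$. Every valid periodic coloring $c$ is an optimal configuration. So by \eqref{eq:e6d4-fourier-slack}, $\widehat{f_\chi}$ vanishes on the full $O(2)$-orbit of every $\xi\in M^*$ with $\sigma_\chi(\xi)\ne0$. Since $f$ is integrable, $\widehat{f_\chi}$ is continuous. It therefore suffices to show that the set of radii
\[
\mathcal R_\chi=\{|\xi|:\ \xi\in M^*,\ \sigma_\chi(\xi)\ne0,\ \text{for some valid $M$-periodic }c\}
\]
is dense in $[0,\infty)$. Then $\widehat{f_\chi}\equiv0$, and Fourier inversion gives $f_\chi\equiv0$. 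Note that the zero frequency is excluded only for the trivial character, so for $\chi\ne\mathbf1$ even $\xi=0$ is allowed.

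\emph{Step 1 (the two characters seen by family (i)).} Take $c(m\alpha_1+n\alpha_2)=(m+\varphi(n),n)\bmod 2$, with $\varphi$ periodic of period $N$. For $\chi=(1,0)$ the phase is $(-1)^{m+\varphi(n)}$. Hence $\sigma_\chi$ is supported on frequencies whose $\alpha_1$-component is $1/2$, and whose $\alpha_2$-component is $t/(2N)$. Its coefficient is a discrete Fourier coefficient of $n\mapsto(-1)^{\varphi(n)}$, and these can be made nonzero at any prescribed residue $t$ by choosing $\varphi$ suitably (e.g.\ a single sign flip in a period). As $N\to\infty$, the lengths of these frequencies fill densely a half-line $[\rho_0,\infty)$. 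Here $\rho_0$ is the distance from the origin to the line $\{\langle\xi,\alpha_1\rangle$-coordinate $=1/2\}$ in the dual lattice coordinates. The twisted family $(m,n+\psi(m))$ and $\chi=(0,1)$, $(1,1)$ are handled the same way, after applying the symmetries of $B$ that permute the three nonzero colors.

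\emph{Step 2 (small radii, via (iii)).} To cover $[0,\rho_0)$, use family~(iii). Start from a 3-coloring $\iota((m+n)\bmod 3)$ with $\iota$ chosen so that $\chi(D)\ne\chi(\iota(j))$ for some $j$. This is always possible, since $\chi$ is nontrivial and we may choose which color is unused. Then recolor an $M$-periodic independent set $S$ contained in one class $\iota^{-1}$-fiber. Then $\sigma_\chi$ equals the (fixed) spectrum of the 3-coloring plus $\pm2$ times the exponential sum of $S$ over $B/M$. Taking $S$ to be a sparse sublattice coset of large index, or a single point per period, makes its exponential sum nonzero at every $\xi\in M^*$ of the relevant coset. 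As $M$ grows these frequencies become dense in $\R^2$, including near $0$. Where the 3-coloring spectrum might cancel this, we vary $S$ to break the cancellation.

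\emph{Obstacle.} I expect the main difficulty to be the bookkeeping in Steps~1--2: showing, for each of the three nontrivial $\chi$, that the non-vanishing condition $\sigma_\chi(\xi)\ne0$ can actually be arranged on a dense set of radii. This requires genuine nonvanishing of explicit exponential sums, not just nontrivial support. The first thing I would try is single-point modifications (one flip of $\varphi$, or one recolored point per period). These make $\sigma_\chi$ an explicit sum of at most two or three exponentials, whose zero sets are easy to avoid.

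\emph{Step 3 (part 2).} By part~1, a magic kernel has the form $f(x,y)=h(x)$. Its constraints reduce to $\widehat h\ge0$ and $h(x)\le0$ for $|x|\ge\min_y d_y=1$, with value $h(0)/\widehat h(0)=2/\sqrt3$. This is exactly a planar Cohn--Elkies magic function at minimal distance~$1$ sharp for the hexagonal lattice. Conversely, given such an $h$, the function $f(x,y)=h(x)$ is colored-feasible: all thresholds satisfy $d_y\ge1$, the nontrivial channels vanish, and the trivial channel is $h$. It certifies $2/\sqrt3$.
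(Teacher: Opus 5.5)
Your proposal is correct, and it takes a genuinely different and shorter route than the paper.

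\textbf{The paper's route.} The paper uses family~(i) on the Fourier side only to make $\widehat{f_\chi}$ vanish for $|\xi|\ge 1/2$, so that $f_\chi$ is band-limited. It then switches to the space side. Families~(ii) and~(iii), through \eqref{eq:e6d4-space-slack}, put zeros of every color-difference function on all circles of radius $\sqrt N$ with $N\ge3$ L\"oschian. A count of L\"oschian numbers, together with Jensen's formula for the entire extension of $t\mapsto f_\chi(t\hat x)$, then forces $f_\chi\equiv0$.

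\textbf{Your route.} You stay entirely on the Fourier side and use family~(iii) there. Your Step~2 closes cleanly once you note one fact. The unmodified $3$-coloring is periodic under $B_3=\ker((m+n)\bmod 3)$, so its exponential sum is supported on $B_3^*$. Now take $M=KB_3$ and recolor a single point $p_0$ per period, chosen in a class $j$ with $\chi(\iota(j))\ne\chi(D)$. State that choice of class explicitly; it is only implicit in your write-up. This gives
\[
\sigma_\chi(\xi)=\pm2\,\e{\langle\xi,p_0\rangle}\ne0 \qquad\text{for every } \xi\in M^*\setminus B_3^*.
\]
The union $\bigcup_K\big(K^{-1}B_3^*\setminus B_3^*\big)$ is dense in $\R^2$. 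So no cancellation ever needs to be ``broken'', and Step~2 alone covers all of $\R^2$. Step~1 becomes unnecessary, and so does the $O(2)$ averaging: by Lemma~\ref{lem:config-dual} and Remark~\ref{rem:config-slackness}, the unaveraged periodic autocorrelation is already dual optimal. Continuity of $\widehat{f_\chi}$ and uniqueness of the Fourier transform on $L^1$ then finish part~1. (In Step~1, the $\alpha_2$-coordinate actually ranges over $\tfrac1N\Z$, not $t/(2N)$, but that step is not needed.)

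\textbf{Comparison.} Your approach removes the number theory (Landau--Bernays, or the $u^2+3v^2$ count) and the complex analysis entirely. The paper's approach yields extra structure along the way, namely band-limitation and forced radial zeros, but the theorem does not need it. Your Step~3 for part~2 matches the paper's argument.
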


\begin{proof}
Given a planar magic function $h$, put $f(x,y)=h(x)$ for every
$y\in C$, or equivalently $f_{\mathbf1}=h$ and $f_\chi=0$ otherwise.
Then $f$ is feasible for the colored $E_6/D_4$ linear program and gives
the same bound, hence is sharp.

Conversely, assume $f$ is a magic kernel.  We will show that nontrivial channels $f_{\chi}$ vanish by showing that they must both be band-limited and have radial shells of zeros according to square roots of L\"oschian $N\geq3$.

Using the complementary slackness lemma, we first note that a family of colorings imposes
conditions on the Fourier spectrum and, in the limit, forces $f_\chi$ to
be bandlimited for every nontrivial $\chi$: in fact,
$\widehat{f_\chi}(\xi)=0$ whenever $|\xi|\geq1/2$.  To achieve this, we use
the colorings in Lemma~\ref{lem:flexcolor}(i) with $\varphi$ the indicator
of $P\Z$, for $P\geq4$ even.  For this choice, the period lattice is
$M=2\alpha_1\Z\oplus P\alpha_2\Z$, so $\xi\in M^*$ is parametrized by
$(t_1,t_2):=(\langle\xi,\alpha_1\rangle,\langle\xi,\alpha_2\rangle)\in
\tfrac12\Z\times\tfrac1P\Z$.  For $\chi=(1,0)$ the amplitude factorizes as
\[
 \sigma_\chi(\xi)=\Big[\sum_{m\bmod2}(-1)^m\e{mt_1}\Big]
 \Big[\sum_{n\bmod P}(-1)^{\varphi(n)}\e{nt_2}\Big]
 =\big(1-\e{t_1}\big)\big(P\,\mathbf1_{t_2\in\Z}-2\big),
\]
which is nonzero exactly when $t_1\in\tfrac12+\Z$ (the second factor
never vanishes for $P\geq3$).  For $\chi=(1,1)$ replace $\varphi$ by
$\varphi(n)+n$, which is again an arbitrary function; for $\chi=(0,1)$ use
the $\psi$-family with the roles of $\alpha_1,\alpha_2$ exchanged.  In
every case there is a minimal vector $e\in\{\alpha_1,\alpha_2\}$ such
that \eqref{eq:e6d4-fourier-slack} makes $\widehat{f_\chi}$ vanish on the
$O(2)$-orbits of all points of the lines
$\{\xi:\langle\xi,e\rangle\in\tfrac12+\Z\}$ with
$\langle\xi,e'\rangle\in\tfrac1P\Z$ ($e'$ the complementary basis
vector).  As $P\to\infty$, these points are dense on the line
$\{\xi:\langle\xi,e\rangle=1/2\}$, whose $O(2)$-orbit is exactly
$\{\xi:|\xi|\geq1/2\}$.  Continuity therefore gives
$\operatorname{supp}\widehat{f_\chi}\subseteq\{|\xi|\leq1/2\}$.

The next step is to find a different choice of colorings to ensure that $f_{\chi}$ must have superlinearly many zeros.
We claim that for every $y\in C$, every L\"oschian $N\geq3$, and every $x_0$ with $|x_0|^2=N$, all four
color-difference functions vanish: $f(x_0,y)=0$.  It suffices to
realize, for each $y$ and each $d\in B$ with
$|d|^2=N$, a valid periodic coloring with a pair of color difference
$y$ at difference $d$; then \eqref{eq:e6d4-space-slack} gives the zero
on the full circle of radius $|d|$.  Write
$j_0=(d_1+d_2)\bmod3$ for the $3$-coloring difference along
$d=d_1\alpha_1+d_2\alpha_2$.
\begin{itemize}
\item $y\ne0$, $j_0\ne0$: use Lemma~\ref{lem:flexcolor}(ii) with
$\iota$ chosen so that $\iota(j+j_0)-\iota(j)=y$ for some $j$.
\item $y\ne0$, $j_0=0$: use (iii) with $S=x_0+d+\Lambda$ for a
sublattice $\Lambda\subset B$ of minimal distance $\geq|d|+2$ (so $S$
has no adjacent pair), and $\iota$ chosen with
$\iota\big((x_0)_1+(x_0)_2\bmod3\big)=D+y$: the pair $(x_0,x_0+d)$
has colors $D+y$ and $D$.
\item $y=0$: use (iii) with
$S=(x_0+\Lambda)\cup(x_0+d+\Lambda)$, again with
$\min(\Lambda)\geq|d|+2$; the pair $(x_0,x_0+d)$ is $D$--$D$ at
distance $\sqrt N\geq\sqrt3$.
\end{itemize}
Inverting the character sum,
$f_\chi(x_0)=|C|^{-1}\sum_y
\overline{\chi(y)}\,f(x_0,y)
=0$ for every $\chi$.
The number of L\"oschian numbers up to $X$ grows like
$cX/\sqrt{\log X}$ (Landau--Bernays~\cite{Ber12}; for the weaker estimate
needed here, the identity $u^2+3v^2=Q(u+v,2v)$ shows that the L\"oschian
numbers contain the values of $u^2+3v^2$, of which there are
$\gg X^{1-\varepsilon}$ up to $X$).  Consequently, for every unit vector
$\hat x$ and every $\chi$, the restriction
$t\mapsto f_\chi(t\hat x)$ has $\gg R^{2-\varepsilon}$ zeros in
$|t|\leq R$.

We now know that $f_\chi$ is band-limited for nontrivial $\chi$, so
any unit vector direction $t \mapsto f_\chi(t\hat x)$, extends to an
entire function of finite exponential type.
On the other hand, a nonzero entire function of finite exponential type
has only $O(R)$ zeros in $|t|\leq R$, counted with multiplicity, by Jensen's formula.
But by the previous paragraph, we know that each such restriction has superlinearly many zeros.
This contradiction shows that $f_\chi$ must vanish for every nontrivial $\chi$.

With $h:=f_{\mathbf1}$ we now have $f(x,y)=h(x)$ for all $y$; feasibility for any $y\ne0$ gives
$h\leq0$ beyond $1$, $\widehat h=\widehat{f_{\mathbf1}}\geq0$, and the
value is $h(0)/\widehat h(0)=2/\sqrt3$: a planar magic function at
distance $1$, sharp for the hexagonal lattice.
\end{proof}

\begin{corollary}[Cylinder collapse]\label{cor:e6d4cyl}
The cylinder LP for aligned $D_4$-fibered packings of $\R^6$
($r=\sqrt2$) admits a sharp magic function if and only if
Conjecture~\ref{conj:planar-lp} holds.
\end{corollary}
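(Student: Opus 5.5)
We treat the two directions separately, and the common bridge is the functoriality of Proposition~\ref{prop:lca-functoriality} together with the collapse argument of Theorem~\ref{thm:e6d4collapse}. Put $L=D_4$, $r=\sqrt2$ and $\T=\R^4/D_4$. The covering radius satisfies $R(D_4)=1$, and the deep holes are exactly the three nontrivial cosets of $D_4^*/D_4\cong(\Z/2)^2$. Identify $C=(\Z/2)^2$ with $D_4^*/D_4\subset\T$ via the inclusion $\phi:C\hookrightarrow\T$. The cylinder forbidden set is $C_\T=\{d_X\ge\sqrt2\}$. The colored forbidden set $C_C$ has thresholds $\sqrt2$ on color difference $0$ and $\sqrt{2-1}=1$ on the nonzero colors, because every nonzero element of $D_4^*/D_4$ lies at distance exactly $1$ from $D_4$. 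With these thresholds $\Phi(C_C)\subseteq C_\T$, so functoriality gives $\operatorname{LP}_C(C_C)\le\operatorname{LP}_\T(C_\T)$. The hexagonal $E_6$ fibration is a valid configuration for both programs at base density $2/\sqrt3$, so both values are at least $2/\sqrt3$.

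For the direction ``Conjecture~\ref{conj:planar-lp} $\Rightarrow$ cylinder magic function'', take a planar magic function $h$ at minimal distance $1$ and set $f(x,\bar y)=h(x)$. Then repeat the triangle argument of Theorem~\ref{thm:triangle}. If $d_X((x,\bar y),0)\ge\sqrt2$, then $|x|^2\ge2-R(D_4)^2=1$, so $f\le0$ there. All nontrivial fiber channels vanish, and the trivial channel is $h$ with $\widehat h\ge0$. The resulting bound is $h(0)/\widehat h(0)=2/\sqrt3$, which the $E_6$ configuration attains, so $f$ is sharp.

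For the converse, let $f$ be a sharp cylinder magic function with value $2/\sqrt3$. The plan is to restrict it to the finite color group. Define $g(x,c)=f(x,\phi(c))$ on $\R^2\times C$. The sign condition transfers because $\Phi(C_C)\subseteq C_\T$. For positive definiteness, the character channels of $g$ are obtained by summing the fiber channels of $f$ over cosets of $L^*$ modulo $(D_4^*)^*=D_4$. Concretely, $\widehat{g_\chi}(\xi)=\sum_{w\in\chi+2\cdot}\widehat{f_w}(\xi)$, where the sum runs over $w\in D_4^*$ in the class corresponding to $\chi$ under $D_4^*/D_4\cong\widehat C$, up to the duality of $D_4$. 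This is Poisson summation for the finite subgroup, or equivalently the primal form of the pushforward in Proposition~\ref{prop:lca-functoriality}. It is nonnegative provided the channel sums converge absolutely. The values satisfy $g(0,0)=f(0,0)$ and $\widehat{g_{\mathbf1}}(0)=\sum_{w\in D_4}\widehat{f_w}(0)\ge\widehat{f_0}(0)$, so the colored value is at most $2/\sqrt3$. Since the colored value is also at least $2/\sqrt3$, $g$ is a colored magic kernel; if the inequality were strict, $g$ would certify a value below the attained density $2/\sqrt3$, a contradiction. Theorem~\ref{thm:e6d4collapse}(2) then produces a planar magic function, and Theorem~\ref{thm:e6d4collapse}(1) even identifies it as $h=g_{\mathbf1}$.

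The main obstacle is justifying the restriction step: sampling $f$ on the finite subgroup and the resulting aliasing identity for the channels, when $f$ is only continuous and integrable. Pointwise sampling of such an $f$, and summing $\widehat{f_w}$ over $w$, need not converge absolutely. I would first mollify $f$ in the fiber and base directions, in the manner of Proposition~\ref{prop:app-mollify}, using an approximate identity that is positive definite. This perturbs the sign condition only slightly. To absorb that perturbation, I would dilate the base by a factor $1+\varepsilon$ and then let $\varepsilon\to0$. An alternative that avoids these limits is to argue entirely on the dual side, combining Theorem~\ref{thm:strong-duality-lca} with functoriality to get equality of the two LP values. This yields ``value equals $2/\sqrt3$'' cleanly, but sharpness in the sense of an \emph{attained} magic function then needs Theorem~\ref{thm:lca-primal-attainment} for the colored problem. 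That theorem applies because the colored forbidden set is the complement of a bounded open neighborhood and is the closure of its interior.
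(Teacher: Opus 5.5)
Your proof is correct. The fallback in your last paragraph is exactly the paper's proof: functoriality plus the hexagonal configuration pin the colored LP value at $2/\sqrt3$, Theorem~\ref{thm:lca-primal-attainment} turns that value into a colored magic kernel, and Theorem~\ref{thm:e6d4collapse} finishes.

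Your primary route for the converse is genuinely different. Instead of comparing values on the dual side and then invoking attainment, you pull the cylinder magic function back along $\Phi$. You then check directly, via the aliasing identity, that the restriction $g$ is a colored magic kernel. In that identity $\widehat{g_\chi}$ is the sum of the $\widehat{f_w}$ over the coset of $D_4$ in $D_4^*$ that corresponds to $\chi$.

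The obstacle you flag is not real, and the mollify-and-dilate detour is unnecessary. On its own, that detour would give only the value $2/\sqrt3$, not an exact kernel, so it would send you back to primal attainment anyway. The key fact is that every $f\in K(X)$ has a nonnegative integrable transform with $\sum_w\int\widehat{f_w}=f(0)$, as recalled in the proof of Theorem~\ref{thm:lca-primal-attainment}. Consequently:
\begin{itemize}
\item Fourier inversion converges absolutely, so sampling at $\bar y\in\phi(C)$ is legitimate.
\item The coset sums converge in $L^1(\R^2)$, so the aliasing identity holds with nonnegative terms.
\item The inequality $\widehat{g_{\mathbf1}}(0)\ge\widehat{f_0}(0)$ follows.
\end{itemize}

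You did not address integrability of $g$, but it follows from positive definiteness. Since $g\le0$ off the compact set $\{|x|\le\sqrt2\}\times C$, pair $g$ with nonnegative positive-definite bumps $\phi_R\to1$ (normalized autocorrelations of balls) and apply Fatou. This gives $\int_{|x|\ge\sqrt2}|g|\le\int_{|x|<\sqrt2}g<\infty$.

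The two routes buy different things.
\begin{itemize}
\item The paper's argument is a two-line consequence of the general duality machinery. Its converse needs only that the cylinder LP \emph{value} be $2/\sqrt3$, not that it be attained.
\item Your argument replaces strong duality and primal attainment by weak duality alone (used to see that $g$ cannot certify less than $2/\sqrt3$). It produces the colored kernel explicitly.
\item Combined with part~(1) of Theorem~\ref{thm:e6d4collapse} and nonnegativity of the aliased terms, your route also gives a cylinder-level collapse: $\widehat{f_w}\equiv0$ for every $w\in D_4^*\setminus D_4$, and $\widehat{f_w}(0)=0$ for $w\in D_4\setminus\{0\}$. The second conclusion comes from the forced equality $\widehat{g_{\mathbf1}}(0)=\widehat{f_0}(0)$.
\end{itemize}
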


\begin{proof}
The cylinder LP gives a bound greater than or equal to that of the colored packing LP by functoriality, and any magic function for the hexagonal lattice produces a magic function $f(x,y) = h(x)$ as before, so the result follows from Theorem~\ref{thm:e6d4collapse} and Theorem~\ref{thm:lca-primal-attainment}.
\end{proof}

\subsection{The rigid $D_4$ over $A_2$ problem}
\label{sec:d4a2_rigidity}

The collapse theorem relies on the flexibility of $(\Z/2)^2$-colorings of
the hexagonal lattice.  The $D_4$ over $A_2$ problem behaves differently:
its triangular base is rigidly three-colorable.

\paragraph{Observation.}
Every proper $3$-coloring of the triangular graph is determined by its
values on any one elementary triangle.  Indeed, such a triangle uses
all three colors.  Each adjacent triangle shares an edge with it, so
the color of its third vertex is forced; the conclusion follows by
propagating across the connected graph of elementary triangles.

Here $C=\Z/3$, the same-color distance is $\sqrt2$, the
cross-color distance is $2/\sqrt3$, and the target center density is
$\sqrt3/2$.  A valid coloring is again exactly a proper $3$-coloring of
the triangular graph, so by the observation above the optimal
configurations form a single orbit under isometries and color
permutations.  Because the coloring cannot be varied to force further
zeros, the collapse proof does not apply.

\begin{conjecture}[Sharpness of $D_4$ over $A_2$]\label{conj:d4a2}
The two-point $\Z/3$-colored LP for $D_4$ over $A_2$ and the cylinder
LP for aligned $A_2$-fibered packings of $\R^4$ at exclusion distance
$\sqrt2$ both have value $\sqrt3/2$, attained by a magic kernel and a
cylinder magic function, respectively.
\end{conjecture}

\begin{proposition}[Planar sharpness implies $D_4/A_2$ sharpness]
\label{prop:d4a2-planar}
Conjecture~\ref{conj:planar-lp} implies
Conjecture~\ref{conj:d4a2}.
\end{proposition}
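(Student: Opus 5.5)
The plan is to transport a planar magic function into both fibered programs by a dilation of the base, and to match it from below with the $D_4$ configuration itself through Lemma~\ref{lem:config-dual}. Assume Conjecture~\ref{conj:planar-lp}, and let $h$ be a planar magic function at minimal distance $1$. So $h\le0$ for $|x|\ge1$, $\widehat h\ge0$, and $h(0)/\widehat h(0)=2/\sqrt3$.

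\emph{Space-side threshold.} Here $L=A_2$ with $r=\sqrt2$ and $R(A_2)^2=2/3$. As in the proof of Theorem~\ref{thm:triangle}, the condition $d_X((x,\bar y),0)\ge\sqrt2$ forces $|x|^2\ge 2-R(A_2)^2=4/3$, that is, $|x|\ge 2/\sqrt3$.

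\emph{Upper bound on the cylinder side.} Put $a=2/\sqrt3$ and define
\[
 f(x,\bar y)=h(x/a).
\]
Then $f\le0$ on the whole forbidden set $C$ of the cylinder program. Because $f$ is constant in the fiber, every nontrivial channel $f_w$ with $w\ne0$ vanishes. The trivial channel is the dilate $h(\cdot/a)$, whose Fourier transform is $a^2\widehat h(a\,\cdot)\ge0$. Its objective value is
\[
 \frac{f(0,0)}{\widehat{f_0}(0)}=a^{-2}\frac{h(0)}{\widehat h(0)}=\frac34\cdot\frac2{\sqrt3}=\frac{\sqrt3}{2},
\]
which is the scaling rule $\delta\mapsto a^{-k}\delta$ from the introduction with $k=2$.

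\emph{Upper bound on the colored side.} For the $\Z/3$-colored program the thresholds are $\sqrt2$ for equal colors and $2/\sqrt3$ for distinct colors. Both are at least $2/\sqrt3$, so the same phase-independent kernel $F_{cc'}(x)=h(x/a)$ is feasible with the same value. (Alternatively, Proposition~\ref{prop:lca-functoriality}, applied to the inclusion $\Z/3\cong A_2^*/A_2\hookrightarrow\R^2/A_2$, already bounds the colored value by the cylinder value.)

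\emph{Matching lower bound.} Realize $D_4$ as an aligned $A_2$-fibered packing. The base is the hexagonal lattice $aB$ of minimal distance $2/\sqrt3$. Its phases are given by the rigid proper $3$-coloring $c=\iota((m+n)\bmod 3)$, with values in the deep-hole classes $A_2^*/A_2\cong\Z/3$. I check separation directly:
\begin{itemize}
\item Base neighbours carry different colors, so their phase difference is a deep hole, and their squared distance is $4/3+2/3=2$.
\item Same-colored points differ by a vector of the index-$3$ sublattice, so their base distance is at least $\sqrt3\,a=2\ge\sqrt2$.
\item Any other pair has base distance at least $\sqrt3\,a>\sqrt2$.
\end{itemize}
The base density is $a^{-2}\cdot 2/\sqrt3=\sqrt3/2$. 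The configuration is periodic, so Lemma~\ref{lem:config-dual} produces a dual measure of value $\sqrt3/2$ for the colored program. Pushing it into the torus phases (Proposition~\ref{prop:lca-functoriality}), or applying Lemma~\ref{lem:config-dual} directly on the cylinder, gives the same lower bound for the cylinder program. Hence both values equal $\sqrt3/2$, and the explicit $f$ is an attaining magic kernel and cylinder magic function. This proves Conjecture~\ref{conj:d4a2}.

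The main obstacle is only bookkeeping. One point is confirming that the $D_4$ cosets really appear as the three deep-hole classes. The other is making sure the normalizations agree: the planar conjecture is stated at distance $1$, while the threshold here is $2/\sqrt3$. The analytic content comes entirely from $h$, and because the kernel is fiber-constant no new Fourier positivity has to be checked.
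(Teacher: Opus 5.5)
Your proposal is correct and is essentially the paper's proof: you dilate the planar magic function by $a=2/\sqrt3$ and use it as a fiber-constant (equivalently, phase-independent) kernel, whose space-side sign condition follows from the covering radius $\sqrt{2/3}$ of $A_2$ and from both color thresholds being at least $a$; you then match the resulting value $\sqrt3/2$ with the properly $3$-colored hexagonal base, i.e.\ the $D_4$ fibration. Your explicit separation check for that configuration and your appeal to Lemma~\ref{lem:config-dual} or functoriality only spell out the attainment step that the paper states in one line.
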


\begin{proof}
Assume that $h$ is a planar magic function at minimal distance~$1$,
sharp for the hexagonal lattice.  Put $a=2/\sqrt3$ and
$h_a(x)=h(x/a)$.  Then $h_a\leq0$ for $|x|\geq a$ and
\[
 \frac{h_a(0)}{\widehat h_a(0)}
 =\frac{1}{a^2}\frac{h(0)}{\widehat h(0)}
 =\frac{3}{4}\frac{2}{\sqrt3}
 =\frac{\sqrt3}{2}.
\]
For the colored problem, take the trivial character channel to be
$h_a$ and every nontrivial channel to vanish.  All three
color-difference kernels then equal $h_a$.  This is feasible because
the cross-color threshold is $a=2/\sqrt3$, while the same-color
threshold $\sqrt2$ is larger.  The properly $3$-colored hexagonal
lattice attains the resulting value $\sqrt3/2$.

For the cylinder problem, normalize the $A_2$ fiber to have minimal
distance $\sqrt2$; its covering radius is $\sqrt{2/3}$.  The
fiber-constant function
\[
 f(x,\bar y)=h_a(x)
\]
has only the nonnegative trivial fiber Fourier channel.  Moreover,
if $d_X((x,\bar y),0)\geq\sqrt2$, then
\[
 |x|^2\geq 2-d_T(\bar y,0)^2
 \geq 2-\frac23=\frac43=a^2,
\]
so the required space-side sign condition follows from that of
$h_a$.  Thus $f$ certifies the value $\sqrt3/2$, which is attained by
the $D_4$ fibration.
\end{proof}

\begin{remark}[Numerical footprint]\label{rem:e6d4numerics}
Theorem~\ref{thm:e6d4collapse} was found through, and retroactively
explains, numerics: at Laguerre-polynomial degree $20$ for the $D_4$-fibered linear program in $6$ dimensions,
the nontrivial channels of the optimizer are very close to $0$, with amplitude
$\sup_\xi|\widehat{f_\chi}|\leq5\times10^{-7}$.  By contrast, the finite-degree polynomial
optimizers for the $D_4$ over $A_2$ problem have nontrivial
cross-channels.
\end{remark}

\section{Discrete Reduction Duals}\label{sec:reduction}

For a rigorous lower bound on the translation-invariant colored LP, we
extend Li's domain-restriction argument~\cite{Li25} while retaining the
finite color group.

Let $C$ be a finite abelian color group, with exclusion distance $d_\delta$
for color difference $\delta\in C$.  Choose a scale $s>0$ and an integer
$m$ satisfying the no-wrap condition
\[
 m\ge 2s\max_\delta d_\delta,
\]
and set
\[
 H=(\Z/m\Z)^k\times C.
\]
We use centered representatives of $(\Z/m\Z)^k$ and the unnormalized finite
Fourier transform on $H$.

\begin{theorem}[Colored discrete reduction]\label{thm:colored-discrete}
Suppose $\mu:H\to\R$ satisfies
\begin{enumerate}
 \item $\mu(0,0)=1$;
 \item $\mu(x,\delta)=0$ when $(x,\delta)\ne(0,0)$ and
       $|x|<s d_\delta$;
 \item $\mu(x,\delta)\ge0$ when $|x|\ge s d_\delta$; and
 \item $\widehat\mu(\eta,\chi)\ge0$ for every character
       $(\eta,\chi)\in\widehat H$.
\end{enumerate}
Then the translation-invariant colored LP in the original base coordinates
is at least
\[
  \frac{s^k}{m^k}\widehat\mu(0,\mathbf1)
  =\frac{s^k}{m^k}\sum_{h\in H}\mu(h).
\]
\end{theorem}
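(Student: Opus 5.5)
We argue on the dual side, using Theorem~\ref{thm:strong-duality-lca}: it suffices to build a measure feasible for \eqref{eq:dual_lca_val} on $X=\R^k\times C$ whose objective value is $\frac{s^k}{m^k}\widehat\mu(0,\mathbf1)$. The natural candidate is the periodic lift of $\mu$ to a lattice. Rescale the base by $1/s$ and set $\Lambda_0=\tfrac1s\Z^k$. Consider the discrete measure
\[
 \nu=\sum_{x\in\Z^k}\sum_{\delta\in C}\mu(x\bmod m,\delta)\,\delta_{(x/s,\delta)},
\]
which is $(\tfrac ms\Z^k\times\{0\})$-periodic.

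\emph{Support and sign.} By (1), $\nu$ has mass $1$ at the identity. Take $x\in\Z^k$ with $x/s\neq0$ or $\delta\neq0$, and let $\bar x$ be the centered representative of $x\bmod m$. If $|x/s|<d_\delta$, then $|x|<sd_\delta\le m/2$ by the no-wrap condition. Hence $x$ is itself the centered representative, so $\bar x=x$, and condition (2) gives weight $0$. Otherwise $|x/s|\ge d_\delta$, so $(x/s,\delta)\in C$ (the admissible-difference set), and the weight $\mu(\bar x,\delta)$ is $\geq0$ because $|\bar x|\ge sd_\delta$ must hold there. This needs a check: when $|x|\ge sd_\delta$ but the centered representative $\bar x$ is short, (2) again forces the weight to be $0$, so the weight is $\ge0$ in every case. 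Therefore $\nu\ge0$ and $\operatorname{supp}\nu\subseteq\{0\}\cup C$.

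\emph{Fourier side.} We compute $\widehat\nu$ by the same Poisson-summation argument as in Lemma~\ref{lem:config-dual}. Write $\nu=\big(\sum_{\lambda\in\frac ms\Z^k}\delta_{(\lambda,0)}\big)*\nu_0$, where $\nu_0$ is the finite measure with the values of $\mu$ on one period. Then $\widehat\nu$ is atomic, supported on $\tfrac sm\Z^k\times\widehat C$, and its atom at $(\eta s/m,\chi)$ equals $(s/m)^k\,\widehat\mu(\eta\bmod m,\chi)$ under the unnormalized finite transform on $H$, up to the normalized Haar measure on $C$. These atoms are nonnegative by (4), so $\widehat\nu$ is a nonnegative tempered measure. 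Its atom at $(0,\mathbf1)$ is $(s/m)^k\sum_{h}\mu(h)$, which is exactly the asserted quantity. The main bookkeeping obstacle is getting this constant right: the covolume of $\tfrac ms\Z^k$ contributes $(s/m)^k$, and the normalized-counting convention on $C$ must cancel against the unnormalized finite transform. I would verify the constant by testing on $\mu=\delta_{(0,0)}$ together with a constant function.

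Together these show $\nu$ is dual feasible with value $\frac{s^k}{m^k}\widehat\mu(0,\mathbf1)$. Weak and strong duality (Theorem~\ref{thm:strong-duality-lca}) then give the lower bound on $P_X(C)$ in the original base coordinates.
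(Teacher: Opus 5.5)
Your proof is correct and is essentially the paper's argument: the measure $\nu$ you write down is exactly the paper's lifted certificate $\alpha_{\R}$ (with the dilation by $s$ undone). The only difference is that you check $\widehat\nu\ge0$ and compute the trivial atom $(s/m)^k\sum_h\mu(h)$ by direct Poisson summation, whereas the paper gets positive definiteness by pushing $\alpha_H$ into $\T_m^k\times C$ via functoriality and then lifting periodically.

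There is one harmless slip in your sign check: $|\bar x|\ge sd_\delta$ need not hold, for example when $x\in m\Z^k\setminus\{0\}$ and $\delta=0$, where the weight is $\mu(0,0)=1$. This does not affect the conclusion, since (1)--(3) already make $\mu\ge0$ on all of $H$. Also, only weak duality, not strong duality, is needed for the final lower bound.
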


\begin{proof}
Write $P(d)$ for the colored LP value with distance data
$(d_\delta)_{\delta\in C}$.
First dilate the base coordinates by $s$, so that the exclusion distances
are $s d_\delta$. Since $P$ is a $k$-dimensional center density, this
convention gives
\[
 P(sd)=s^{-k}P(d),\qquad\text{or equivalently}\qquad
 P(d)=s^kP(sd).
\]
Let
\[
 \T_m^k=\R^k/m\Z^k
\]
with normalized Haar measure, and view
$H=(\Z/m\Z)^k\times C$ as a finite subgroup of $\T_m^k\times C$ using
centered integer representatives.  The no-wrap hypothesis implies that an
admissible difference in $H$ maps to an admissible difference in the torus.

Set
\[
 \alpha_H=\sum_{h\in H}\mu(h)\delta_h,
 \qquad z=\widehat\mu(0,\mathbf1)=\sum_{h\in H}\mu(h).
\]
The first three assumptions give
$\alpha_H=\delta_0+\nu_H$, where $\nu_H$ is nonnegative and supported on
the admissible nonzero differences.  The fourth assumption says that
$\alpha_H$ is positive definite.  More precisely,
\[
 \delta_0+\nu_H-zm_H\in K^*(H),
\]
because its Fourier transform agrees with $\widehat\mu$ away from the
trivial character and vanishes at the trivial character.  Thus $\alpha_H$
is dual feasible for the finite-group program with objective value $z$.

Apply Proposition~\ref{prop:lca-functoriality} to the inclusion
\[
 H\lhook\joinrel\longrightarrow \T_m^k\times C.
\]
Its proof shows directly that the pushforward of $\alpha_H$ gives a torus
dual measure with objective at least $z$.  On the dual side, the adjoint of
averaging over the lattice $m\Z^k$ is periodic lifting.  Periodically lift
the torus certificate along the quotient map
\[
 \R^k\times C\longrightarrow\T_m^k\times C.
\]
The lift of a positive-definite distribution on the torus is positive
definite, as is immediate from its nonnegative Fourier series, while
normalized Haar measure lifts to $m^{-k}m_{\R^k\times C}$.  Hence the lifted
dual measure $\alpha_{\R}:=\delta_0+\nu_{\R}$ satisfies
\[
 \alpha_{\R}-\frac{z}{m^k}m_{\R^k\times C}
 \in K^*(\R^k\times C).
\]
Writing a tilde for periodic lift, the nonnegative measure here is
\[
 \nu_{\R}
 =\widetilde{\iota_*\nu_H}
  +\sum_{n\in m\Z^k\setminus\{0\}}\delta_{(n,0)},
\]
where $\iota:H\hookrightarrow\T_m^k\times C$ is the inclusion.  Every
point in the support of $\nu_{\R}$ lies in
the admissible-difference set: centered representatives minimize the norm
of their lifts, and the no-wrap hypothesis handles the nonzero lattice
points.  Thus $\alpha_{\R}$ is dual feasible with objective at least
$z/m^k$, and weak duality gives $P(sd)\ge z/m^k$.

Finally, dilation of the Euclidean base gives $P(d)=s^kP(sd)$.  Therefore
\[
 P(d)\ge \frac{s^k}{m^k}z,
\]
as claimed.
\end{proof}

\subsection{A certified gap for $D_5/A_3$}

For $D_5$ over $A_3$, use the root-lattice normalization
$r^2=2$. The four cosets of $A_3^*/A_3\cong\Z/4\Z$ have squared depths
$\rho_\delta^2=0,3/4,1,3/4$. Hence the base exclusion distances,
determined by $d_\delta^2+\rho_\delta^2=r^2$, have squares
\[
 (2,5/4,1,5/4).
\]

\begin{theorem}[Certified $D_5/A_3$ gap]\label{thm:d5a3-gap}
The translation-invariant two-point colored LP for $D_5/A_3$ has value at
least
\[
 \frac{289801}{288000}
 =1+\frac{1801}{288000}>1.
\]
Consequently, this LP is not sharp for $D_5/A_3$ and cannot prove the
center-density conjecture of Cohn and Rajagopal.
\end{theorem}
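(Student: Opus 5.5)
The plan is to apply Theorem~\ref{thm:colored-discrete} with $k=2$ and the color group $C=\Z/4\Z$. The distance data are $d_\delta^2=(2,5/4,1,5/4)$ for $\delta=0,1,2,3$. We need one explicit rational function $\mu$ on $H=(\Z/m\Z)^2\times\Z/4\Z$ satisfying conditions (1)--(4), for which
\[
\frac{s^2}{m^2}\sum_{h\in H}\mu(h)=\frac{289801}{288000}.
\]
The denominator $288000=2^8\cdot 3^2\cdot 5^3$ suggests the parameters. A natural guess is a scale $s$ with $s^2$ rational, say $s$ a small integer or half-integer, and $m$ a multiple of $60$ or $120$. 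The no-wrap condition $m\ge 2s\sqrt2$ must hold. The value of $\sum\mu$ then absorbs the remaining factors.

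To find $\mu$, I would solve the finite LP numerically over $H$. Its variables are $\mu(x,\delta)$ for $|x|\ge s d_\delta$, and the constraints are $\mu\ge0$ there, $\mu(0,0)=1$, and $\widehat\mu(\eta,\chi)\ge0$ for all $(\eta,\chi)\in\widehat H$. The objective is to maximize $\sum\mu$. The problem size shrinks a lot if $\mu$ is symmetrized under the point group of $(\Z/m\Z)^2$ preserving $|x|$ (signed coordinate permutations) and under $\delta\mapsto-\delta$. This symmetry does not change the LP value, since the constraints are invariant. Each $\widehat\mu(\eta,\chi)$ is then a real cosine sum indexed by orbits.

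The numerical optimizer then has to be converted into an exact rational certificate. I would round the solution to rationals and fix the support pattern dictated by the approximate complementary slackness: set to zero the $\mu$-values that are numerically tiny, and identify the active Fourier constraints. One can then either solve the active equalities exactly or shrink toward a strictly feasible point. The latter means taking a convex combination with a trivially feasible measure, such as $\delta_0$, which has $\widehat{\delta_0}\equiv1>0$. This gives slack in all Fourier inequalities at a small cost in objective, and the rounding error can then be absorbed. Conditions (1)--(3) are checked directly. Condition (4) is checked in exact rational arithmetic: the characters of $(\Z/m\Z)^2$ involve cyclotomic values, so I would either verify $\widehat\mu\ge0$ over $\Q(\zeta_m)$ with interval bounds, or choose $m$ so that the real parts reduce to cosines with manageable exact expressions.

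This rounding-and-verification step is the main obstacle. The gap $1801/288000\approx0.006$ is small, so the grid must be fine enough, meaning $s$ must be large, for the discrete LP to exceed $1$ at all. That forces a large $H$, which makes exact verification of the Fourier positivity over every character expensive. I would first try coarse grids, for example $s\in\{2,3,4\}$, and increase $s$ until the numerical value clears $1$ with margin. Once $P\ge 289801/288000>1$ holds, the consequence is immediate from Corollary~\ref{cor:density-bound} and strong duality (Theorem~\ref{thm:strong-duality-lca}). The primal value equals this dual value, so no feasible primal function can certify density $1$, and the two-point LP cannot prove \cite[Conjecture~4.1]{CR24}.
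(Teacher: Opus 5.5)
Your plan matches the paper's proof: apply Theorem~\ref{thm:colored-discrete} with $k=2$, $C=\Z/4\Z$ and thresholds $d_\delta^2=(2,5/4,1,5/4)$ to an explicit exact rational $\mu$ verified by computer. The paper supplies that certificate in its ancillary package rather than describing how it was found, so in both cases the argument is complete only once that specific $\mu$ is exhibited.

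One small correction: the final consequence needs neither Corollary~\ref{cor:density-bound} nor strong duality. The LP value is an infimum over primal-feasible functions, and Theorem~\ref{thm:colored-discrete} already bounds it below via weak duality, so no feasible function can certify density $1$.
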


\begin{proof}
The exact rational certificate supplied in the accompanying
reproducibility package satisfies the hypotheses of
Theorem~\ref{thm:colored-discrete} and has the displayed objective value.
The package verifies all certificate conditions using exact arithmetic.
\end{proof}

\section{Duality for the Program on $\R^k\times A$}\label{sec:duality}

This section proves the duality theorems stated in
Section~\ref{sec:formulations}.

The strong duality theorem of Berdysheva, Farkas, Ga\'al, Ramabulana, and
R\'ev\'esz \cite[Theorem~5.3 and Remark~5.4]{BFGRR26} for the Delsarte
problem on locally compact abelian groups specializes to
Theorem~\ref{thm:strong-duality-lca}.  Section~\ref{app:classes} reconciles the
relevant spaces of functions and measures used in their work and ours.
That reconciliation rests on weak duality for $K(X)$, for which
Proposition~\ref{prop:app-mollify} adapts the mollification argument from~\cite{CdLS22}.
Dual attainment, Theorem~\ref{thm:dual-attainability}, is not part of
\cite{BFGRR26}, which equates the two optimal values but makes no
assertion that either is attained.  It generalizes
\cite[Proposition~3.6]{CdLS22}, the case $X=\R^n$, and follows in
Section~\ref{app:attainability} from vague compactness of the dual
feasible set and the continuity of the Fourier transform on the cone of
positive definite measures.  The primal attainment theorem,
Theorem~\ref{thm:lca-primal-attainment}, is proved in
Section~\ref{app:primal-attainment} by adapting
\cite[Proposition~A.1]{CdLS22} as well.

\subsection{Conventions}

Feasible functions are real-valued with $\widehat f\geq0$, hence have
real and even Fourier transforms and are themselves even; we work with
even real-valued functions throughout.  Apart from the mollification
in the proof of Proposition~\ref{prop:app-mollify}, every construction
below uses only convolutions of indicator functions on $A$ and smooth
compactly supported functions on the base $\R^k$, so $A$ may be an
arbitrary compact abelian group.  In the mollification we write
$\mathcal S(X)$ for the even real smooth functions on $X$ all of whose
derivatives decay faster than every power of $(1+|x|)$, and
$\mathcal S(\widehat X)$ for its Fourier image; tempered distributions
are the continuous duals.  This is stated for $A$ a compact abelian Lie
group, that is, a product of a torus and a finite abelian group, which
covers every application in this paper.  In general one first averages
the fiber variable over a closed subgroup $H$ with $A/H$ Lie, which is
possible inside any neighborhood of the identity, and applies the
argument on $\R^k\times(A/H)$; every Bruhat--Schwartz function on $X$
factors through such a quotient~\cite{Osb75}.

\begin{lemma}[Feasibility]\label{lem:app-feasible}
If $0\notin C$, there is a continuous integrable $f$ on $X$, smooth and
rapidly decreasing in the base variable, with $f\leq0$ on $C$,
$\widehat f\geq0$, and $\widehat f(0,\mathbf1)=1$.
\end{lemma}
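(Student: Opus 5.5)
The plan is to construct $f$ explicitly as a product of an autocorrelation in the fiber and a rescaled autocorrelation in the base, supported inside a small neighborhood of the identity that misses $C$. Since $C$ is closed and $0\notin C$, there is an open neighborhood $W$ of $0$ in $X$ with $W\cap C=\emptyset$. By the product topology we may take $W\supseteq B^k_{2\varepsilon}\times V$, where $\varepsilon>0$ and $V\subset A$ is an open neighborhood of $\mathbf 0$ in $A$. Choose a smaller open neighborhood $V_0$ with $V_0-V_0\subseteq V$ and $V_0=-V_0$; this is possible by continuity of subtraction in $A$. Because $V_0$ is a nonempty open subset of a compact group, $m_A(V_0)>0$.

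Next define the two factors. In the fiber, set
\[
 u=\mathbf 1_{V_0}*\widetilde{\mathbf 1_{V_0}},
\]
a continuous function on $A$ supported in $V_0-V_0\subseteq V$. Its Fourier coefficients are $\widehat u(\chi)=|\widehat{\mathbf 1_{V_0}}(\chi)|^2\geq0$, and $\widehat u(\mathbf 1)=m_A(V_0)^2>0$. Because $V_0=-V_0$, $u$ is real and even. In the base, take a real even $\psi\in C_c^\infty(\R^k)$ supported in $B^k_\varepsilon$ and not identically zero, and set $v=\psi*\psi$. Then $v$ is smooth and compactly supported in $B^k_{2\varepsilon}$, hence rapidly decreasing, and $\widehat v=\widehat\psi^{\,2}\geq0$ with $\widehat v(0)=(\int\psi)^2$. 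To make this positive, choose $\psi\geq0$.

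Now put $f(x,a)=c\,v(x)u(a)$ with $c=1/(\widehat v(0)\widehat u(\mathbf 1))$. This $f$ is continuous and integrable, and since it is supported in $B^k_{2\varepsilon}\times V\subseteq W$, it vanishes on $C$. Its transform factors as $\widehat f(\xi,\chi)=c\,\widehat v(\xi)\widehat u(\chi)\geq0$, with $\widehat f(0,\mathbf 1)=1$.

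The only point needing care is the choice of a symmetric neighborhood $V_0$ with $V_0-V_0\subseteq V$ for an arbitrary compact abelian $A$. This is standard topological-group continuity, so I expect no real obstacle. Everything else is the convolution identity $\widehat{g*\widetilde g}=|\widehat g|^2$ on each factor, together with the product structure of the Fourier transform on $\R^k\times A$.
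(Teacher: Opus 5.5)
Your proof is correct and follows the paper's construction: a product of a base factor with the fiber autocorrelation $\mathbf 1_{V_0}*\widetilde{\mathbf 1_{V_0}}$, where $V_0-V_0\subseteq V$ and $C$ misses a product neighborhood of the identity. The only difference is the base factor. The paper uses a general rescaled Euclidean Cohn--Elkies function, which is only $\leq0$ outside $B^k_\varepsilon$. You use the compactly supported autocorrelation $\psi*\psi$, a trivial instance of such a function, so your $f$ vanishes identically on $C$ rather than being merely nonpositive there.
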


\begin{proof}
Since $C$ is closed and avoids the identity, there are
$\varepsilon>0$ and an open neighborhood $V\ni e$ in $A$ with
$C\cap(B^k_\varepsilon\times V)=\emptyset$.  Choose an even Schwartz
function $f_1$ on $\R^k$ with $f_1\leq0$ outside $B^k_\varepsilon$,
$\widehat{f_1}\geq0$, and $\widehat{f_1}(0)>0$ (a rescaled feasible
function for the Euclidean sphere-packing bound), and choose an open
$V_0\ni e$ with $V_0V_0^{-1}\subseteq V$.  Put
$\varphi=\mathbf 1_{V_0}/m_A(V_0)$ and
$\psi=\varphi*\widetilde\varphi$, where
$\widetilde\varphi(a)=\varphi(a^{-1})$: then $\psi$ is continuous,
$\psi\geq0$ is supported
in $V$, $\widehat\psi=|\widehat\varphi|^2\geq0$, and
$\widehat\psi(\mathbf1)=\big(\int_A\varphi\big)^2=1>0$.  The product
$f(x,a)=f_1(x)\psi(a)$ is continuous and integrable with
$\widehat f=\widehat{f_1}\otimes\widehat\psi\geq0$ and
$\widehat f(0,\mathbf1)>0$.  At every point of $C$ either
$|x|\geq\varepsilon$, where $f_1\leq0$ and $\psi\geq0$, or
$a\notin V$, where $\psi=0$; in both cases $f\leq0$.  Rescale so that
$\widehat f(0,\mathbf1)=1$.
\end{proof}

\subsection{Comparison of classes}\label{app:classes}

In the normalization used in the Delsarte literature the primal reads
\[
 \Delta(C)=\sup\left\{\int_Xf\,dm_X:
 f\in K(X),\ f(0)=1,\ f\leq0\hbox{ on }C\right\},
\]
and we write $\Delta^{\infty,1}(C)$ for the same supremum restricted to
the Wiener amalgam space
$C^{\infty,1}(X)=\{f\in C(X):\sum_{\ell\in\Z^k}\sup_{B+\ell}|f|<\infty\}$
with $B=[0,1)^k\times A$, in which \cite{BFGRR26} sets up the primal
problem.  Since $C^{\infty,1}(X)\subseteq C(X)\cap L^1(X)$
\cite[Section~3]{BFGRR26}, a positive definite element of
$C^{\infty,1}(X)$ lies in $K(X)$, so
$\Delta^{\infty,1}(C)\leq\Delta(C)$; and
$\Delta^{\infty,1}(C)>0$ because the feasible function of
Lemma~\ref{lem:app-feasible} lies in $C^{\infty,1}(X)$.  The two
normalizations are exchanged by $f\mapsto f/f(0)$ and
$f\mapsto f/\int_Xf$: a feasible $f$ for
\eqref{eq:primal_lca_val} is nonzero, so $f(0)>0$ and $f/f(0)$ is
admissible for $\Delta(C)$ with integral $1/f(0)$; conversely
$f/\int_Xf$ is feasible for \eqref{eq:primal_lca_val} whenever
$\int_Xf>0$, and admissible functions with $\int_Xf=0$ do not affect a
positive supremum.  Hence
\begin{equation}\label{eq:app-reciprocity}
 P_X(C)=1/\Delta(C),
 \qquad\hbox{with }1/\infty=0.
\end{equation}

\begin{proof}[Proof of Theorem~\ref{thm:strong-duality-lca}]
Apply \cite[Theorem~5.3]{BFGRR26} to the locally compact abelian group
$X=\R^k\times A$ with $\Omega=X\setminus C$, taking $\rho=-m_X$ and
$\sigma=\delta_0$ as in \cite[Remark~5.4]{BFGRR26}.  The hypotheses
hold: $\Omega$ is open and contains $0$ because $C$ is closed and
$0\notin C$; both $m_X$ and $\delta_0$ are translation bounded and
even; and $\delta_0$ is strictly positive definite in the sense of
\cite{BFGRR26}, its Fourier transform being the constant function $1$,
which has no zero on $\widehat X$.  The conclusion is
\[
 \Delta^{\infty,1}(C)
 =\inf\{s\in\R:\ s\delta_0-m_X\in\mathcal M(C)\},
\]
where $\mathcal M(C)$ is the set of translation bounded real Radon
measures of the form $\nu-\kappa$ with $\kappa\geq0$ supported in $C$
and $\nu$ positive definite.

Each admissible $s>0$ produces a measure feasible for
\eqref{eq:dual_lca_val}: writing $s\delta_0-m_X=\nu-\kappa$ and
dividing by $s$,
\[
 \mu:=\delta_0+\tfrac1s\kappa=\tfrac1s(\nu+m_X)
\]
is nonnegative, supported in $\{0\}\cup C$, and has $\mu(\{0\})=1$
because $0\notin C$.  A translation bounded positive definite measure
has nonnegative Fourier transform, and
$\widehat{m_X}=\delta_{(0,\mathbf1)}$, so
$\widehat\mu=\frac1s(\widehat\nu+\delta_{(0,\mathbf1)})$ is a
nonnegative tempered measure whose atom at $(0,\mathbf1)$ has mass at
least $1/s$.  Since $\Delta^{\infty,1}(C)>0$, admissible values of $s$
decrease to $\Delta^{\infty,1}(C)$ through positive numbers, whence
$D_X(C)\geq1/\Delta^{\infty,1}(C)$; when $\Delta^{\infty,1}(C)=\infty$
this says only $D_X(C)\geq0$, which also holds because $\mu=\delta_0$
is dual feasible with objective value $0$.  Combining this with
\eqref{eq:app-reciprocity} and with the weak duality of
Proposition~\ref{prop:app-mollify},
\[
 D_X(C)\ \geq\ \frac1{\Delta^{\infty,1}(C)}
 \ \geq\ \frac1{\Delta(C)}
 \ =\ P_X(C)\ \geq\ D_X(C),
\]
so all the inequalities are equalities and $P_X(C)=D_X(C)$.
\end{proof}

\subsection{Weak duality for $K(X)$}

\begin{proposition}\label{prop:app-mollify}
Suppose $T=\delta_0+\mu$ with $\mu\geq0$ a tempered measure on $X$ and
$\widehat T=z\delta_{(0,\mathbf1)}+\nu$ with $\nu\geq0$ on
$\widehat X$.  If $f:X\to\R$ is continuous and integrable with
$f\leq0$ on $\operatorname{supp}\mu$, $\widehat f\geq0$, and
$\int_Xf\,dm_X=1$.  Then $f(0)\geq z$; in particular
$D_X(C)\leq P_X(C)$.  If $f(0)=z$, then $f$ is $\mu$-integrable and
$\widehat f$ is $\nu$-integrable, with
\[
 \int_X f\,d\mu=0
 \qquad\hbox{and}\qquad
 \int_{\widehat X}\widehat f\,d\nu=0,
\]
so that $f=0$ $\mu$-almost everywhere and $\widehat f=0$
$\nu$-almost everywhere.  If moreover $\mu$ and $\nu$ are atomic, then
$f=0$ on $\operatorname{supp}\mu$ and $\widehat f=0$ on
$\operatorname{supp}\nu$.
\end{proposition}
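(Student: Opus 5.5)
The plan is to mollify $f$ into the Schwartz class, where the pairing $\langle T,\cdot\rangle=\langle\widehat T,\widehat{\cdot}\rangle$ is legitimate, and then pass to the limit using Fatou's lemma on both sides. As in the Conventions, we reduce first to $A$ a compact abelian Lie group. Fix an even approximate identity $\rho_\varepsilon=\eta_\varepsilon*\widetilde{\eta_\varepsilon}$ on $X$, where $\eta_\varepsilon$ is smooth and compactly supported in a shrinking neighborhood of the identity. Then $\widehat{\rho_\varepsilon}=|\widehat{\eta_\varepsilon}|^2\ge0$ and $\widehat{\rho_\varepsilon}\to1$ locally uniformly. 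We also fix a Gaussian-type damping $g_R(\xi)$ on $\widehat X$, equal to $e^{-|\xi|^2/R^2}$ in the base variable and truncating the discrete channels. Define $f_{\varepsilon,R}$ by $\widehat{f_{\varepsilon,R}}=\widehat f\,\widehat{\rho_\varepsilon}\,g_R$. This function is Schwartz, and $\widehat{f_{\varepsilon,R}}\ge0$. Since $\widehat f\ge0$ and $f$ is continuous and integrable, $\widehat f\in L^1(\widehat X)$ by the standard positive-definite argument. So $f$ is given by Fourier inversion, and $f_{\varepsilon,R}\to f$ uniformly as $\varepsilon\to0$, $R\to\infty$.

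The Schwartz identity gives
\[
 f_{\varepsilon,R}(0)+\int f_{\varepsilon,R}\,d\mu
 =z\,\widehat{f_{\varepsilon,R}}(0,\mathbf1)+\int\widehat{f_{\varepsilon,R}}\,d\nu .
\]
The right-hand term is nonnegative. Moreover $\widehat{f_{\varepsilon,R}}(0,\mathbf1)\to\widehat f(0,\mathbf1)=\int f=1$. The main obstacle is the left-hand integral $\int f_{\varepsilon,R}\,d\mu$. The function $f_{\varepsilon,R}$ need not be $\le0$ on $\operatorname{supp}\mu$, because mollification smears positive values of $f$ near $C$ into $C$.

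To handle this, I would write $f_{\varepsilon,R}=f*\sigma$, where $\sigma$ is the inverse transform of $\widehat{\rho_\varepsilon}g_R$, and split $f=f^+-f^-$. The negative part gives $-\int f^-*\sigma\,d\mu$. Because $\sigma$ can be chosen nonnegative (it is a convolution of nonnegative bumps with a Gaussian), this term is nonpositive. The positive part $f^+$ is supported away from $\operatorname{supp}\mu$ up to continuity: $f^+=0$ on $\operatorname{supp}\mu$ and $f$ is continuous. Its contribution $\int f^+*\sigma\,d\mu$ tends to $0$. For this I would use translation boundedness of $\mu$, which holds since $\mu=T-\delta_0$ and positive-definite measures are translation bounded, together with uniform continuity and integrability of $f^+$. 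This gives a bound of the form $\sup_{|t|\le\varepsilon}\|f^+(\cdot-t)\|$ in a Wiener amalgam norm restricted near the support, plus the Gaussian tail. This estimate is the delicate step.

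Granting it, we get $f(0)\ge z+\liminf\big(\int\widehat{f_{\varepsilon,R}}\,d\nu+\int(f^-*\sigma)\,d\mu\big)\ge z$. If $f(0)=z$, both liminf terms must vanish. Since $\widehat{f_{\varepsilon,R}}\uparrow\widehat f$ pointwise, up to a monotone choice of damping, Fatou's lemma gives $\int\widehat f\,d\nu=0$. Likewise $f^-*\sigma\to f^-$ pointwise, so $\int f^-\,d\mu=0$. Combined with $f^+=0$ on $\operatorname{supp}\mu$, this shows that $f$ is $\mu$-integrable with $\int f\,d\mu=0$. Since $f\le0$ $\mu$-a.e. and $\widehat f\ge0$, the integrands vanish almost everywhere. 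For atomic measures, almost-everywhere vanishing means vanishing at each atom. Finally, applying the inequality with $T$ a dual-feasible measure and $\mu$ its restriction to $C$ yields $D_X(C)\le P_X(C)$.
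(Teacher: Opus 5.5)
\paragraph{The unproved step.} The gap is the step you flag and then grant: that $\int (f^+*\sigma)\,d\mu\to0$. The route you sketch for it does not work. Uniform continuity only gives a pointwise bound $(f^+*\sigma)(p)\le\omega(\varepsilon)+(\text{tail})$ for $p\in\operatorname{supp}\mu$, where $\omega$ is the modulus of continuity of $f$. But in the cases of interest $\mu$ has infinite total mass (e.g.\ a lattice autocorrelation). So you need a bound that is summable over unit cells, that is, a small Wiener-amalgam modulus of $f^+$ near $\operatorname{supp}\mu$.

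Translation boundedness of $\mu$ together with $f\in C(X)\cap L^1(X)$ gives nothing of the sort. A continuous, integrable, positive definite function need not even lie in $W(C_0,\ell^1)$. For example, on $\R$ take $\psi=\sum_{n\ge1}\mathbf 1_{[n,n+n^{-2}]}$ and $f=\psi*\widetilde\psi$. Then $f$ is continuous, integrable and positive definite, but $f(k)=\sum_{n>k}n^{-2}\ge 1/(k+1)$, so $\sum_k\sup_{[k,k+1)}|f|=\infty$.

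There are also two smaller errors. First, $f_{\varepsilon,R}$ is not Schwartz. Its transform $\widehat f\,\widehat{\rho_\varepsilon}g_R$ is rapidly decreasing but in general not smooth, so $f*\sigma$ is smooth but not rapidly decreasing, and your pairing identity needs a separate justification. Second, sharply truncating the discrete channels produces a Dirichlet-type kernel on $A$, which destroys $\sigma\ge0$; a heat or Fej\'er kernel would be needed instead.

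\paragraph{The missing idea.} You should damp on the space side by \emph{multiplication}, not convolution, and remove the mollifier first. The paper takes $\varphi\ge0$ smooth with compact support, $\widehat\varphi\ge0$, $\widehat\varphi(0)=1$, and sets $\Phi_{\varepsilon_1}(x,a)=\widehat\varphi(\varepsilon_1x)$. This damping has three useful properties:
\begin{itemize}
\item Since $0\le\Phi_{\varepsilon_1}\le1$, the product $f\Phi_{\varepsilon_1}$ is still $\le0$ on $\operatorname{supp}\mu$ \emph{exactly}.
\item It is rapidly decreasing, because $|f|\le f(0)$.
\item On the Fourier side each channel of $\widehat f$ is convolved with $\varphi_{\varepsilon_1}\ge0$ (where $\varphi_\varepsilon(x)=\varepsilon^{-k}\varphi(x/\varepsilon)$), so positivity is preserved.
\end{itemize}
Next convolve with $\varphi_{\varepsilon_2}\otimes\kappa_i$, where $\kappa_i$ is a positive definite approximate identity on $A$. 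This yields a genuine Schwartz $h$ with $\widehat h\ge0$, to which the pairing applies. At fixed $\varepsilon_1$, the limit $\varepsilon_2\to0$, $i\to\infty$ is handled by dominated convergence: a rapidly decreasing majorant is $\mu$-integrable because $\mu$ is tempered. This gives $\int h\,d\mu\to\int f\Phi_{\varepsilon_1}\,d\mu\le0$, so the smearing term you worry about never has to be estimated. Finally, let $\varepsilon_1\to0$ and apply Fatou to $(-f)\Phi_{\varepsilon_1}$ and to the Fourier side. This gives $f(0)\ge z$ together with $\int(-f)\,d\mu\le f(0)-z$ and $\int\widehat f\,d\nu\le f(0)-z$, after which your treatment of the equality case goes through.
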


\begin{proof}
This is \cite[Proposition~3.5]{CdLS22} with a mollifier adapted to the
compact factor.  Let $\varphi:\R^k\to\R$ be smooth, even, and
nonnegative with $\operatorname{supp}\varphi\subseteq B^k_1$,
$\widehat\varphi\geq0$, and $\widehat\varphi(0)=1$, and put
$\varphi_\varepsilon(x)=\varepsilon^{-k}\varphi(x/\varepsilon)$.  On
the fiber, let $\kappa_i=\psi_i*\widetilde{\psi_i}$ with
$\psi_i\in C^\infty(A)$ nonnegative and supported in a neighborhood
basis of the identity, normalized by
$\int_A\kappa_i\,dm_A=1$; then $\widehat{\kappa_i}=|\widehat{\psi_i}|^2\geq0$,
$\widehat{\kappa_i}(\mathbf1)=1$, and $u*\kappa_i\to u$ uniformly for
every continuous $u$ on $A$.  Writing
$\Phi_{\varepsilon_1}(x,a)=\widehat\varphi(\varepsilon_1x)$, positive
definiteness bounds $|f|$ by $f(0)$, so $f\cdot\Phi_{\varepsilon_1}$ is
rapidly decreasing and
\[
 h:=\big(f\cdot\Phi_{\varepsilon_1}\big)*
 \big(\varphi_{\varepsilon_2}\otimes\kappa_i\big)
\]
is Schwartz with
\[
 \widehat h(\xi,\chi)
 =\big(\widehat f(\cdot,\chi)*\varphi_{\varepsilon_1}\big)(\xi)\,
 \widehat\varphi(\varepsilon_2\xi)\,\widehat{\kappa_i}(\chi)\geq0,
\]
since multiplication by $\Phi_{\varepsilon_1}$ convolves each character
channel of $\widehat f$ with $\varphi_{\varepsilon_1}$ in $\xi$ alone.
As $\mu$ is a nonnegative tempered measure, $(1+|x|)^{-K}$ is
$\mu$-integrable for some $K$ \cite{Sch66} ($A$ being compact, the
Euclidean statement applies verbatim), so $f\cdot\Phi_{\varepsilon_1}$
is $\mu$-integrable, and pairing $T$ with $h$ gives
\[
 h(0)+\int h\,d\mu
 =z\,\widehat h(0,\mathbf1)+\int\widehat h\,d\nu
 \geq z\,\big(\widehat f(\cdot,\mathbf1)*\varphi_{\varepsilon_1}\big)(0),
\]
the right side being independent of $\varepsilon_2$ and $i$.  Since $h$
is a weighted average of $f\cdot\Phi_{\varepsilon_1}$ over shrinking
neighborhoods, letting $\varepsilon_2\to0$ and $i\to\infty$ yields, by
dominated convergence exactly as in \cite{CdLS22},
\[
 f(0)+\int f\cdot\Phi_{\varepsilon_1}\,d\mu
 \geq z\,\big(\widehat f(\cdot,\mathbf1)*\varphi_{\varepsilon_1}\big)(0).
\]
The left integral is nonpositive because $f\leq0$ on
$\operatorname{supp}\mu$ and $\Phi_{\varepsilon_1}\geq0$, while as
$\varepsilon_1\to0$ the right side tends to
$z\,\widehat f(0,\mathbf1)=z$, the trivial channel of $\widehat f$
being continuous.  Hence $f(0)\geq z$, and taking
$\alpha=\delta_0+\mu$ dual feasible with
$z=\widehat\alpha(\{(0,\mathbf1)\})$ gives $P_X(C)\geq D_X(C)$.

The same limit bounds the two integrals.  Because $\varphi\geq0$ we
have $|\widehat\varphi|\leq\widehat\varphi(0)=1$, so
$0\leq\Phi_{\varepsilon_1}\leq1$ and $\Phi_{\varepsilon_1}\to1$
pointwise.  Rewrite the last display as
\[
 \int(-f)\,\Phi_{\varepsilon_1}\,d\mu
 \leq f(0)-z\,\big(\widehat f(\cdot,\mathbf1)*\varphi_{\varepsilon_1}\big)(0).
\]
The integrand on the left is nonnegative, so Fatou's lemma gives
\[
 \int(-f)\,d\mu\leq f(0)-z.
\]

The Fourier side is bounded the same way.  The pairing identity reads
\[
 \int\widehat h\,d\nu=h(0)+\int h\,d\mu-z\,\widehat h(0,\mathbf1).
\]
As $\varepsilon_2\to0$ and $i\to\infty$ the right side tends to
$f(0)+\int f\Phi_{\varepsilon_1}\,d\mu
-z\big(\widehat f(\cdot,\mathbf1)*\varphi_{\varepsilon_1}\big)(0)$,
which is at most
$f(0)-z\big(\widehat f(\cdot,\mathbf1)*\varphi_{\varepsilon_1}\big)(0)$,
and $\widehat h\geq0$ tends pointwise to
$\widehat f(\cdot,\chi)*\varphi_{\varepsilon_1}$.  Fatou's lemma in
$(\varepsilon_2,i)$, and again in $\varepsilon_1$ with each channel of
$\widehat f$ continuous, gives
\[
 \int\widehat f\,d\nu\leq f(0)-z.
\]

Suppose $f(0)=z$.  The two bounds become $\int(-f)\,d\mu\leq0$ and
$\int\widehat f\,d\nu\leq0$, with nonnegative integrands against
nonnegative measures, so both integrals vanish: $f=0$ $\mu$-almost
everywhere and $\widehat f=0$ $\nu$-almost everywhere.

If in addition $\mu$ is atomic, retain a single atom of $\mu$ in the
limiting inequality: all contributions from $\mu$ are nonpositive and
$\Phi_{\varepsilon_1}\to1$ pointwise, so a strictly negative value of
$f$ at that atom contradicts equality.  Thus $f$ vanishes on
$\operatorname{supp}\mu$, and the same argument applied to a single
atom of $\nu$ on the Fourier side gives $\widehat f=0$ on
$\operatorname{supp}\nu$.
\end{proof}
\subsection{Proof of attainability}\label{app:attainability}

The argument is vague compactness of the dual feasible set together
with the vague continuity of the Fourier transform on the cone of
positive definite measures. Here $A$ may be an arbitrary compact abelian group.

\begin{lemma}[The dual feasible set is vaguely bounded]
\label{lem:app-uniform}
For every compact $K\subseteq X$ there is $c_K<\infty$ with
$\mu(K)\leq c_K$ for every $\mu$ feasible for
\eqref{eq:dual_lca_val}.
\end{lemma}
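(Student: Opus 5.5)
The plan is to test every dual feasible $\mu$ against a single fixed continuous, compactly supported, positive definite function. Positive definiteness of $T=\mu-zm_X$ then converts into an upper bound for $\mu$ on a neighborhood of the identity. Translating that neighborhood covers $K$.

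\emph{Step 1: the test function.} Write $\mu=\delta_0+\nu$ with $\nu\ge0$ supported on $C$, and set $z=\widehat\mu(\{(0,\mathbf1)\})$. Fix $\varepsilon>0$ and an open $V_0\ni e$ in $A$. Put
\[
 g=(\mathbf 1_{B^k_\varepsilon}\otimes\mathbf 1_{V_0})*\widetilde{(\mathbf 1_{B^k_\varepsilon}\otimes\mathbf 1_{V_0})}.
\]
This $g$ is continuous, nonnegative and compactly supported. It satisfies $\widehat g=|\cdot|^2\ge0$ and $g\ge c_0>0$ on a smaller neighborhood $W$ of the identity, for instance $W=B^k_{\varepsilon/2}\times V_1$ with $V_1V_1^{-1}\subseteq V_0$.

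\emph{Step 2: the pairing inequality.} Pair $\widehat\mu=z\delta_{(0,\mathbf1)}+\widehat T$ with $\widehat g$:
\[
 \int g\,d\mu=\langle\widehat\mu,\widehat g\rangle\;\ge\;0 .
\]
This alone gives no upper bound, so the roles must be reversed. Instead test $T$ against $g$ and use that $\widehat T\ge 0$ is dominated by $\widehat\mu$. The cleaner route is the standard translation-boundedness estimate for positive definite measures \cite[Section~4]{AGdL}: if $\mu$ is positive definite and nonnegative, then $\mu(x+W)\le C\,(\mu*\widetilde g)(0)$ uniformly in $x$. One derives this from $|\langle\mu,\tau_x g\rangle|\le\langle\mu,g\rangle$, which holds because $x\mapsto\langle\mu,\tau_xg\rangle=(\mu*\widetilde g)(x)$ is a positive definite continuous function. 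Since $\mu\ge0$ and $g\ge c_0\mathbf 1_W$, this gives
\[
 c_0\,\mu(x+W)\le(\mu*\widetilde g)(x)\le(\mu*\widetilde g)(0)=\int g\,d\mu .
\]
Note that $\mu$ itself is positive definite, since $\widehat\mu\ge0$.

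\emph{Step 3: bounding $\int g\,d\mu$ near the identity.} This is the main obstacle. Choose $\varepsilon$ and $V_0$ so small that $\operatorname{supp}g=(B^k_{2\varepsilon}\times V_0V_0^{-1})$, up to closure, misses $C$. This is possible because $C$ is closed and $0\notin C$, exactly as in Lemma~\ref{lem:app-feasible}. Then $\nu$ does not charge $\operatorname{supp}g$, so
\[
 \int g\,d\mu=g(0),
\]
a constant independent of $\mu$. Hence $\mu(x+W)\le g(0)/c_0$ for every $x$ and every feasible $\mu$.

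\emph{Step 4: covering $K$.} Cover the compact set $K$ by finitely many translates $x_1+W,\dots,x_N+W$, and take $c_K=Ng(0)/c_0$. The only delicate point is justifying the inequality $(\mu*\widetilde g)(x)\le(\mu*\widetilde g)(0)$ for a tempered measure. For this I would use that $\mu*\widetilde g$ has Fourier transform $\widehat\mu\,\overline{\widehat g}$, which is a nonnegative measure, so Bochner's theorem applies to this continuous function. If needed, I would first mollify by the $\Phi_{\varepsilon_1}$ cutoff of Proposition~\ref{prop:app-mollify} to legitimize the pairing.
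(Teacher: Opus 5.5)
Your proof is correct, but it takes a different route from the paper's. The paper pairs $\mu=\delta_0+\kappa$ with a single positive definite $F\in C_c(X)$ satisfying $F\le0$ on $C$ and $F\le-1$ on $K\cap C$. That $F$ comes from the sign-swap lemma \cite[Lemma~4.2]{BFGRR26}, and $0\le\langle\mu,F\rangle\le F(0)-\kappa(K\cap C)$ then gives $\mu(K)\le1+F(0)$. This is weak duality against one primal-type test function, with the explicit constant $1+2m_X(S+W)/m_X(V)$.

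You use the support constraint only in a neighbourhood of the identity. There it forces $\mu=\delta_0$, hence $\int g\,d\mu=g(0)$ for your nonnegative positive definite bump $g=\phi*\widetilde\phi$. Positive definiteness of $x\mapsto(\mu*g)(x)$ then carries this one local bound to $\mu(x+W)\le g(0)/c_0$ for every $x$, and a finite cover of $K$ finishes. In effect you rerun the standard proof that positive definite measures are translation bounded \cite[Section~4]{AGdL}, observing that its constant depends only on $\mu$ near the origin, which feasibility pins down. This avoids the sign-swap lemma entirely. It needs only $|h(x)|\le h(0)$ for a continuous positive definite $h$, and it gives directly a bound on $\mu(x+W)$ uniform in both $x$ and $\mu$.

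Two small cleanups. First, the false start at the beginning of Step~2 can be dropped. Second, take $\phi\in C_c(X)$ nonnegative instead of an indicator. Then positive definiteness of $\mu*\phi*\widetilde\phi$ follows immediately from $\mu(\Psi*\widetilde\Psi)\ge0$ for $\Psi=\sum_i c_i\tau_{x_i}\phi\in C_c(X)$. This is the same basic property of positive definite measures the paper invokes, so the Fourier-side justification you sketch in Step~4 becomes unnecessary.
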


\begin{proof}
We may assume $K\cap C\neq\emptyset$, since otherwise
$\mu(K)\leq\mu(\{0\})=1$.

The mechanism is weak duality against a single test function.  Suppose
$F\in C_c(X;\R)$ is positive definite with $F\leq0$ on $C$ and
$F\leq-1$ on $K\cap C$.  A positive definite measure is nonnegative on
the continuous positive definite functions it integrates
\cite[Section~4]{AGdL}, and $F$ has compact support, so writing
$\mu=\delta_0+\kappa$ with $\kappa\geq0$ supported on $C$,
\[
 0\leq\langle\mu,F\rangle=F(0)+\int_CF\,d\kappa\leq F(0)-\kappa(K\cap C),
\]
whence $\mu(K)\leq1+F(0)$, a bound depending only on $K$.  Some such
$F$ is needed: without the constraint
$\operatorname{supp}\kappa\subseteq C$ the measures $\delta_0+t\sigma$
are feasible for every $t>0$ and every positive definite positive
$\sigma$ supported off the origin, and these are not vaguely bounded.

The sign swap lemma \cite[Lemma~4.2]{BFGRR26} supplies $F$.  Choose a
compact symmetric neighborhood $N$ of $0$ in $X$ small enough that
\[
 S:=\big((K\cap C)\cup(-(K\cap C))\big)+N
\]
avoids $0$; this is possible because $K\cap C$ is compact and
$0\notin C$.  Then $S$ is symmetric and compact, has nonempty interior
and hence positive Haar measure, and is bounded away from $0$.  Since
$C$ too is closed and avoids $0$, we may choose an open neighborhood
$V\ni0$ with compact closure so small that $W:=V-V$ satisfies both
$W\cap C=\emptyset$ and $(W+W+W)\cap S=\emptyset$.  For these data
\cite[Lemma~4.2]{BFGRR26} produces a positive definite
$F\in C_c(X;\R)$ with $F\equiv-1$ on $S$ and with $F>0$ only on $W$.
Hence $F\leq0$ off $W$, in particular on $C$, and $F\leq-1$ on
$K\cap C\subseteq S$.  That lemma also gives the explicit bound
$F(0)=2m_X(S+W)/m_X(V)$, so $\mu(K)\leq1+2m_X(S+W)/m_X(V)$.
\end{proof}
We use the following form of the vague continuity of the Fourier
transform on the cone of positive definite measures.

\begin{proposition}[{\cite[Lem.~4.11.10]{MS17}}; see also
{\cite[Proposition~8.1]{SS21}}]\label{prop:app-continuity}
Let $(\mu_\alpha)$ be a net of positive definite measures on a locally
compact abelian group converging vaguely to a measure $\mu$.  Then
$\mu$ is positive definite and $\widehat{\mu_\alpha}\to\widehat\mu$
vaguely.
\end{proposition}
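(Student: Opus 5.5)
The plan is to establish positive definiteness of the limit by testing against functions of the form $g*\widetilde g$. I would then obtain uniform translation bounds on the transforms $\widehat{\mu_\alpha}$ and identify every vague cluster point of $(\widehat{\mu_\alpha})$ with $\widehat\mu$.

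\emph{Step 1 (positive definiteness).} For $g\in C_c(X)$ the function $g*\widetilde g$ lies in $C_c(X)$. Vague convergence therefore gives
\[
 \int g*\widetilde g\,d\mu=\lim_\alpha\int g*\widetilde g\,d\mu_\alpha\ge0 ,
\]
so $\mu$ is positive definite. By the Bochner--Schwartz theorem for measures on locally compact abelian groups (Argabright--Gil de Lamadrid~\cite{AGdL}), $\widehat\mu$ and each $\widehat{\mu_\alpha}$ are nonnegative, translation bounded Radon measures. They are characterized by
\[
 \int|\check g|^2\,d\widehat\nu=\int g*\widetilde g\,d\nu
 \qquad(g\in C_c(X),\ \nu\in\{\mu,\mu_\alpha\}),
\]
and by its twisted versions in which $g$ is replaced by $g\chi$ for characters $\chi$, which translate $|\check g|^2$ on $\widehat X$.

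\emph{Step 2 (uniform translation boundedness).} Fix a compact $\widehat K\subset\widehat X$. I would choose $h\in C_c(X)$ with $|\check h|\ge1$ on $\widehat K$; this is possible by taking an approximate identity and rescaling. Put $g=h*h$, so that $\check g=\check h^{\,2}$ and $|\check g|^2=|\check h|^4$ is nonnegative, at least $1$ on $\widehat K$, and lies in the Wiener amalgam space on $\widehat X$. Then
\[
 \widehat{\mu_\alpha}(\xi+\widehat K)\le\int g*\widetilde g\,\overline{\chi_\xi}\,d\mu_\alpha .
\]
Since $g*\widetilde g$ is a positive definite function in $C_c(X)$ and $\widehat{\mu_\alpha}\ge0$, the right side is at most $\int g*\widetilde g\,d\mu_\alpha$, uniformly in $\xi$. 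This quantity converges to $\int g*\widetilde g\,d\mu$, so it is bounded for all $\alpha$ beyond some index. Hence the tail of the net is uniformly translation bounded, and in particular vaguely relatively compact.

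\emph{Step 3 (identification of the limit).} Let $\rho$ be a vague cluster point of $(\widehat{\mu_\alpha})$. For $u\in C_c(\widehat X)$ and $g$ as above,
\[
 \int u*|\check g|^2\,d\widehat{\mu_\alpha}=\int\check u\cdot(g*\widetilde g)\,d\mu_\alpha ,
\]
obtained by integrating the twisted identities against $u$. The right side converges to the corresponding integral against $\mu$, because $\check u\cdot(g*\widetilde g)\in C_c(X)$. The left side involves $u*|\check g|^2$, which is not compactly supported. I would truncate it, using the uniform translation bound from Step~2 together with the amalgam decay of $|\check g|^2$ to make the tail uniformly small in $\alpha$. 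This yields
\[
 \int u*|\check g|^2\,d\rho=\int u*|\check g|^2\,d\widehat\mu
\]
for all such $u$ and $g$. Letting $g$ run through an approximate identity, so that $|\check g|^2/\|\check g\|_\infty^2\to1$ locally uniformly, and again controlling tails by translation boundedness, gives $\rho=\widehat\mu$. A relatively compact net with a unique cluster point converges, so $\widehat{\mu_\alpha}\to\widehat\mu$ vaguely.

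The main obstacle is the tail control in Step~3, since vague convergence alone says nothing about test functions without compact support. The uniform translation bound of Step~2, which rests on positivity of $\widehat{\mu_\alpha}$ and the inequality $|\int\psi\chi\,d\nu|\le\int\psi\,d\nu$ for positive definite $\psi$, is what makes that step work.
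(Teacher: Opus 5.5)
Step~1 is correct. Step~2, however, rests on a false inequality.

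\textbf{The inequality in Step~2 is false.} Take a positive definite measure $\nu$ and $\psi=g*\tilde g$. Your claim $\bigl|\int\psi\,\overline{\chi_\xi}\,d\nu\bigr|\le\int\psi\,d\nu$ reads, on the Fourier side, $\int|\check g|^2(\cdot-\xi)\,d\widehat\nu\le\int|\check g|^2\,d\widehat\nu$. Positivity of $\widehat\nu$ gives nothing of the sort. On $\R$, $\nu=\cos(2\pi\eta x)\,dx$ is positive definite with $\widehat\nu=\tfrac12(\delta_\eta+\delta_{-\eta})$. For nonzero $0\le g\in C_c(\R)$, the twisted pairing at a suitable $\xi=\pm\eta$ is at least $\tfrac12(\int g)^2$. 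Meanwhile $\int\psi\,d\nu=\tfrac12(|\widehat g(\eta)|^2+|\widehat g(-\eta)|^2)\to0$ as $\eta\to\infty$.

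\textbf{The conclusion of Step~2 also fails for nets.} Index by pairs $(F,n)$ with $F\subset C_c(\R)$ finite. By Riemann--Lebesgue, choose $\eta_{F,n}>n$ with $n|\widehat f(\pm\eta_{F,n})|<1/n$ for all $f\in F$, and put $\mu_{F,n}=n\cos(2\pi\eta_{F,n}x)\,dx$. This net of positive definite measures converges vaguely to $0$. Yet its transforms carry atoms of mass $n/2$, so no tail is uniformly translation bounded. The tail control in Step~3 therefore has no foundation.

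Step~2 can be rescued in two special cases, neither of which the proposition assumes. For a \emph{sequence}, Banach--Steinhaus bounds $|\mu_n|(\operatorname{supp}\psi)$. For \emph{nonnegative} $\mu_\alpha$, as in the proof of Theorem~\ref{thm:dual-attainability}, choosing $h\ge0$ makes $\psi\ge0$ and the inequality trivial.

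\textbf{The last move of Step~3 is also wrong.} It runs the approximate identity in the wrong direction. If $|\check g|^2/\|\check g\|_\infty^2\to1$ locally uniformly, then $u*|\check g|^2/\|\check g\|_\infty^2\to\int u$. That limit is a constant and carries no information about $\rho$. You would need $|\check g|^2$ to concentrate at $0$ (Fej\'er-type $g$), with tail control uniform in $g$.

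\textbf{A repair.} The paper does not prove this proposition; it quotes the references in the statement. A proof valid for arbitrary nets needs neither translation bounds nor cluster points, only $\widehat{\mu_\alpha}\ge0$.

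After Step~1, every $\phi\in K_2(X)=\operatorname{span}\{g*h:g,h\in C_c(X)\}$ satisfies $\int\check\phi\,d\widehat{\mu_\alpha}=\int\phi\,d\mu_\alpha\to\int\phi\,d\mu=\int\check\phi\,d\widehat\mu$.

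Given $F\in C_c(\widehat X)$, fix $g\in C_c(X)$ with $|\check g|\ge1$ on a compact neighbourhood $W$ of $\operatorname{supp}F$. Let $G=F/|\check g|^2$ on $W$ and $G=0$ off $W$.

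$K_2(X)$ is closed under convolution and involution. Hence $\{\check\phi:\phi\in K_2(X)\}$ is a self-adjoint, point-separating, nowhere-vanishing subalgebra of $C_0(\widehat X)$. Stone--Weierstrass then gives $\phi$ with $\|G-\check\phi\|_\infty\le\varepsilon$.

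Now $\Phi=\check\phi\,|\check g|^2$ is the transform of $\phi*g*\tilde g\in K_2(X)$, and $|F-\Phi|\le\varepsilon|\check g|^2$ everywhere. Since $\int|\check g|^2\,d\widehat{\mu_\alpha}=\int g*\tilde g\,d\mu_\alpha$ converges,
\[
 \limsup_\alpha\Bigl|\int F\,d\widehat{\mu_\alpha}-\int F\,d\widehat\mu\Bigr|
 \le 2\varepsilon\int g*\tilde g\,d\mu .
\]
Since $g$ does not depend on $\varepsilon$, letting $\varepsilon\to0$ gives $\widehat{\mu_\alpha}\to\widehat\mu$ vaguely.
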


The value $D_X(C)$ is finite: the function of
Lemma~\ref{lem:app-feasible} is feasible for
\eqref{eq:primal_lca_val} after dividing by
$\widehat f(0,\mathbf1)=1$, so $P_X(C)\leq f(0)<\infty$, and
Proposition~\ref{prop:app-mollify} gives $D_X(C)\leq P_X(C)$.  An
optimizing sequence therefore exists.

Let $\mu_n$ be an optimizing sequence for $D_X(C)$, and write
$z_n=\widehat{\mu_n}(\{(0,\mathbf1)\})$, so that $z_n\to D_X(C)$.
Dual feasibility asks exactly that each $\widehat{\mu_n}$ be a
nonnegative measure, so each $\mu_n$ is positive definite.  Recall that
the weak-$*$ topology on Radon measures is naturally equipped with the
vague topology.
By Lemma~\ref{lem:app-uniform} the sequence $\mu_n$ is bounded on compact sets, i.e. bounded in the vague topology.
By Banach-Alaoglu, there is some subsequential limit $\mu_n \to \mu$.

\begin{proof}[Proof of Theorem~\ref{thm:dual-attainability}]
  It suffices to show that the limiting measure $\mu$ is feasible and optimal for $D_X(C)$.

  The support condition is checked by noting that
  \[\operatorname{supp} \mu \subseteq \bigcup_n \operatorname{supp} \mu_n \subseteq \{ 0 \} \cup C.\]
  Similarly, by considering a test function around $0$, we find $\mu(\{0\})$ is a limit point of $\mu_n(\{0\}) = 1$.

  Proposition~\ref{prop:app-continuity} supplies the remaining
  condition $\widehat\mu\geq0$, and with it the convergence
  $\widehat{\mu_n}\to\widehat\mu$ in the vague topology.  Therefore
  $\mu$ is feasible, and it remains to compute the objective.

  For a vaguely convergent net $\rho_\alpha\to\rho$ of nonnegative
  measures and a compact set $K$ one has
  $\limsup_\alpha\rho_\alpha(K)\leq\rho(K)$: any $f\in C_c$ with
  $f\geq\mathbf 1_K$ gives
  $\limsup_\alpha\rho_\alpha(K)\leq\lim_\alpha\int f\,d\rho_\alpha
  =\int f\,d\rho$, and the infimum of the right-hand side over such $f$
  is $\rho(K)$ by outer regularity.  Applying this to
  $\widehat{\mu_n}\to\widehat\mu$ at the compact set
  $\{(0,\mathbf1)\}$,
  \[
   D_X(C)=\lim_nz_n\leq\widehat\mu\big(\{(0,\mathbf1)\}\big).
  \]
  The reverse inequality holds because $\mu$ is dual feasible, so its
  objective value equals $D_X(C)$ and the supremum is attained.
\end{proof}

\subsection{Primal attainment}\label{app:primal-attainment}

The argument below adapts \cite[Proposition~A.1]{CdLS22}, which in turn
follows \cite[Theorem~3]{GOS17}, to the setting of $X=\R^k\times A$.

\begin{proof}[Proof of Theorem~\ref{thm:lca-primal-attainment}]
Since $U$ is a neighborhood of the identity, $C=X\setminus U$ is closed
and $0\notin C$, so Lemma~\ref{lem:app-feasible} supplies a function
feasible for \eqref{eq:primal_lca_val}; in particular the feasible set
is nonempty and $P_X(C)<\infty$.  Normalize the mutually dual Haar
measures on $X=\R^k\times A$ and
$\widehat X=\R^k\times\widehat A$ so that Plancherel and Fourier inversion hold.
Recall that for locally compact abelian groups $X$, if
$g\in C_0(X)\cap L^1(X)$ is positive definite, then its Fourier transform
$\widehat g$ is a nonnegative integrable function, and
\[
 g(0)=\int_{\widehat X}\widehat g\,dm_{\widehat X},
 \qquad
 \widehat g(0)=\int_Xg\,dm_X.
\]

Choose feasible $f_j$ with $f_j(0)\to P_X(C)$.  Positive definiteness gives
$|f_j(x)|\leq f_j(0)$ for every $x\in X$.  Since $f_j\leq0$ on $C$ and
$\int_Xf_j=1$,
\begin{align*}
 \|f_j\|_1
 &=\int_U(|f_j|+f_j)\,dm_X-1
 \leq 2m_X(U)f_j(0)-1,\\
 \|f_j\|_2^2&\leq\|f_j\|_\infty\|f_j\|_1.
\end{align*}
In particular every feasible function has
$f_j(0)\geq m_X(U)^{-1}$, so $P_X(C)>0$.
Thus $(f_j)$ is bounded in $L^2(X)$.  After passing to a subsequence it
converges weakly in $L^2$; by Mazur's lemma, tail convex combinations
converge strongly.  All the constraints are convex and the objectives of
these combinations still tend to $P_X(C)$, so we relabel and suppose that
$f_j\to f$ strongly in $L^2(X)$.  Plancherel gives
$\widehat f_j\to h:=\mathcal Ff$ strongly in $L^2(\widehat X)$.  Passing to
a further subsequence, both convergences hold almost everywhere.

We have $h\geq0$ almost everywhere, and Fatou's lemma yields
\[
 \int_{\widehat X}h\,dm_{\widehat X}
 \leq\liminf_j\int_{\widehat X}\widehat f_j\,dm_{\widehat X}
 =P_X(C).
\]
Hence $h\in L^1(\widehat X)$, and its inverse Fourier transform is a
continuous function in $C_0(X)$.  It agrees almost everywhere with the
$L^2$ limit $f$, and from now on $f$ denotes this continuous
representative.  In particular
$\|f\|_\infty\leq\|h\|_1$, so $f$ is integrable on $U$.  On $C$ we have
$f\leq0$ almost everywhere, and Fatou applied to $-f_j$ on $C$ shows that
$f$ is integrable there as well.  Thus $f\in C_0(X)\cap L^1(X)$.

The condition $C=\overline{\operatorname{int}C}$ now promotes the sign
constraint to a pointwise one: the almost-everywhere inequality gives
$f\leq0$ on the open set $\operatorname{int}C$, by continuity and the fact
that Haar measure is positive on nonempty open sets, and then on its
closure.  Since $f\in L^1(X)$, its Fourier transform has a continuous
representative.  This representative agrees almost everywhere with $h$,
so it is nonnegative everywhere.  Consequently $f$ is positive definite.

It remains to recover the normalization.  The uniform bound on $U$ and
almost-everywhere convergence give
$\int_Uf_j\to\int_Uf$ by dominated convergence.  Fatou on the nonnegative
functions $-f_j|_C$ gives
\[
 \int_Cf\,dm_X\geq\limsup_j\int_Cf_j\,dm_X
 =1-\int_Uf\,dm_X,
\]
and therefore $b:=\int_Xf\,dm_X\geq1$.  The function $f/b$ is primal
feasible and
\[
 P_X(C)\leq \frac{f(0)}b\leq f(0)
 =\int_{\widehat X}h\,dm_{\widehat X}\leq P_X(C).
\]
All inequalities are equalities, so $f/b$ attains the primal infimum (and,
in fact, $b=1$).
\end{proof}

\section*{Data Availability}

The certificates underlying the $D_5/A_3$ result of
Section~\ref{sec:reduction}, together with a verification script and its
test, are distributed as ancillary files with the arXiv version of this
paper.

\end{document}